\newcommand{\eq}{\begin{equation}}
\newcommand{\en}{\end{equation}}
\newcommand{\re}[1]{\mbox{(\ref{#1})}}
\newtheorem{Theorem}{Theorem}
\newtheorem{theorem}[Theorem]{Theorem}
\newtheorem{lemma}[Theorem]{Lemma}
\newtheorem{corollary}[Theorem]{Corollary}
\newtheorem{construction}[Theorem]{Construction}
\newtheorem{proposition}[Theorem]{Proposition}
\newtheorem{example}[Theorem]{Example}
\newtheorem{defn}[Theorem]{Definition}
\newtheorem{question}[Theorem]{Question}
\newtheorem{conjecture}[Theorem]{Conjecture}
\newtheorem{condition}[Theorem]{Condition}
\newtheorem{problem}[Theorem]{Problem}
\theoremstyle{definition}
\newtheorem{remark}[Theorem]{Remark}
\def\Cinlar{\c{C}inlar}
\def\cmaj {concave majorant}
\def\cmin {convex minorant}
\newcommand {\ovC} {\overline{C}}
\newcommand {\unC} {\underline{C}}
\newcommand {\unc} {\underline{c}}
\newcommand {\IR} {\mbox{\msbm\symbol{'122}}}
\newcommand{\IP}{\mathbb{P}}
\newcommand{\IE}{\mathbb{E}}
\newcommand{\ints}{\mathbb{Z}}
\newcommand{\IZ}{\mathbb{Z}}
\newcommand {\IN} {\mbox{\msbm\symbol{'116}}}
\newcommand{\te}{\rightarrow}
\newcommand{\ed}{\mbox{$ \ \stackrel{d}{=}$ }}
\newcommand{\convd}{\mbox{$ \ \stackrel{\!d}{\rightarrow}$ }}
\newcommand{\lb}[1]{\label{#1}}
\newenvironment{prp}[1]{\begin{proposition}\protect\label{#1}}{\end{proposition}}
\newcommand {\ee}{e}
\newcommand {\giv}{ \,|\,}
\newcommand{\ex}{B^{\mbox{$\scriptstyle{\rm ex}$}}}
\newcommand{\me}{B^{\mbox{$\scriptstyle{\rm me}$}}}
\newcommand{\hf}{ \mbox{${1 \over 2}$}}
\newcommand{\thf}{ \mbox{${3 \over 2}$}}
\newcommand{\fhf}{ \mbox{${5 \over 2}$}}
\newcommand{\hR}{\widehat{R}}
\newcommand {\arcsinh} {{\rm arcsinh}}
\newcommand {\erfc} {{\rm erfc}}
\newcommand{\erf}{{\rm erf}}
\def\IR{\mathbb{R}}
\def\IN{\mathbb{N}}
\def\barU{\overline{U}}
\def\ba#1\ee{\begin{align*}#1\end{align*}}
\def\ban#1\ee{\begin{align}#1\end{align}}
\newcommand {\arcosh} {{\rm{arcosh}}}
\newcommand {\arcos} {{\rm arcos}}
\newcommand {\argmin} {{\rm argmin}}
\newcommand {\htau} {    {\hat{\tau}}  }
\newcommand{\ar}{{\rm a }}
\newcommand{\E}{\IE}
\newcommand{\tr}{$(\tau, \rho)$}
\newtheorem{definition}[Theorem]{Definition}
\newcommand {\alphabr} {\alpha^\circ}
\newcommand {\taubr} {\tau^\circ}
\newcommand {\htaubr} {    {\hat{\tau}^\circ}  }
\newcommand {\rhobr} {\rho^\circ}
\newcommand{\gh}{\hat{g}}
\newcommand{\hp}{\left(P\hat{g}\right)}
\begin{document}

\title{The greatest convex minorant of Brownian motion, meander, and bridge}

\author{Jim Pitman\thanks{University of California at Berkeley; email pitman@stat.Berkeley.EDU; research supported in part by N.S.F.\ Grant DMS-0806118} 
\,\,\,\,\,\,Nathan Ross\thanks{University of California at Berkeley; email ross@stat.Berkeley.EDU}}

\maketitle

\begin{abstract} 
This article contains both a point process and a sequential description of the greatest \cmin\ of Brownian motion on a finite interval.
We use these descriptions to provide new analysis of various features of the \cmin\
such as the set of times where the Brownian motion meets its minorant. 
The equivalence of the these descriptions is non-trivial, which leads to many interesting identities between quantities
derived from our analysis. The sequential description can be viewed as a Markov chain for
which we derive some fundamental properties.

\end{abstract}

\section{Introduction}

The {\em greatest convex minorant} (or simply \cmin\ for short) of a real-valued function $(x_u, u \in U)$ with domain $U$ contained in the real line
is the maximal convex function $(\unc_u, u \in I)$ defined on a closed interval $I$ containing $U$ with $\unc_u \le x_u$ for all $u \in U$.
A number of authors have provided descriptions of certain features of the \cmin\ for various stochastic processes such as random walks \cite{MR994088},
Brownian motion \cite{MR2007793, brownistan, gboom83, p83, suidan01}, Cauchy processes \cite{MR1747095}, Markov Processes \cite{MR770946}, and L\'evy processes (Chapter XI of \cite{MR1739699}).

In this article, we will give two descriptions of the \cmin\ of various Brownian path fragments which yield new insight into the structure of the \cmin\
of a Brownian motion over a finite interval.
As we shall see below, such a \cmin\ is a piecewise linear function with infinitely many linear segments which accumulate
only at the endpoints of the interval.  We refer to linear segments as ``faces,"
the ``length" of a face is as projected onto the horizontal time axis, and the slope of a face is the slope
of the corresponding segment.  We also refer to the points where the \cmin\ equals the process as vertices; note that these points are
also the endpoints of the linear segments.  See figure \ref{11} for illustration.

\begin{figure}[h]
\begin{center}
\includegraphics[scale=.75]{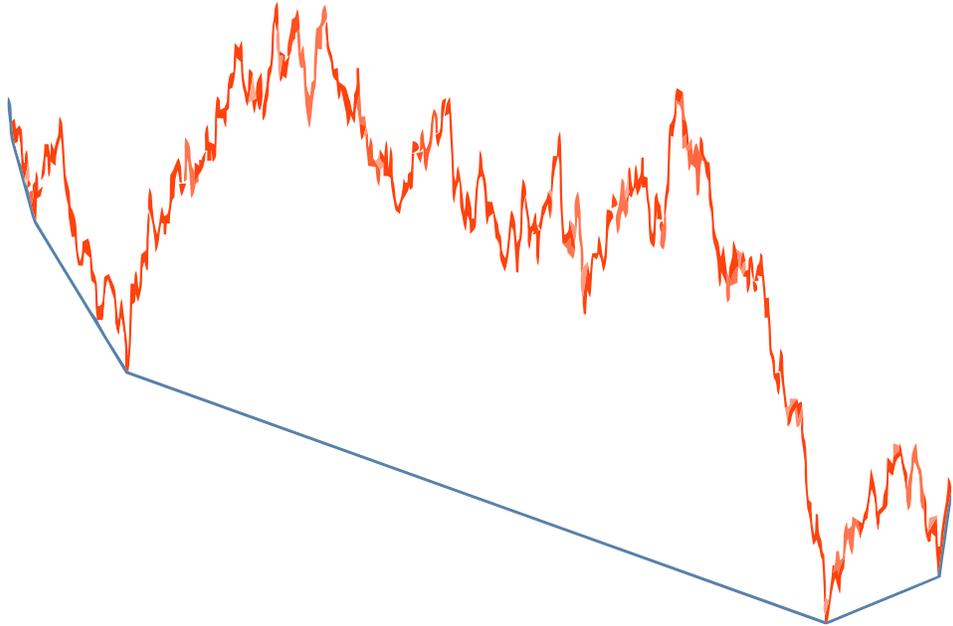}
\end{center}
\caption{A typical instance of a finite time Brownian motion and its \cmin.  The ``faces" of the \cmin\ are the linear segments, the ``lengths" are
as projected to a horizontal axis, and the ``slope" is the slope of the segment.}\label{11}
\end{figure}

Our first description is a Poisson point process of the lengths and slopes of the faces
of the \cmin\ of Brownian motion on an interval of a random exponential length.
This result can be derived from the recent developments of \cite{ap10} and \cite{pub10} 
and is in the spirit of previous studies of the \cmin\ of Brownian motion run to infinity (e.g. \cite{gboom83}).
We provide a proof below in Section~\ref{pppd}.

\begin{theorem}\label{bmpp}
Let $\Gamma_1$ an exponential random variable with rate one.
The lengths~$x$ and slopes~$s$ of the faces of the \cmin\ of a Brownian motion on 
$[0, \Gamma_1]$
form a Poisson point process on $\IR^+ \times \IR$ with intensity measure
\begin{align}
\frac{\exp\{-\frac{x}{2}\left(2+s^{2}\right)\}}  {\sqrt{2 \pi x}} \, ds\,dx, \hspace{5mm} x\geq0, s\in \IR. \label{imbm}
\end{align}
\end{theorem}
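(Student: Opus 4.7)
The plan is to derive Theorem \ref{bmpp} by specializing a general point-process description of the greatest \cmin\ of a real-valued Lévy process killed at an independent exponential time, as developed in \cite{pub10} and \cite{ap10}. That general description states: if $X$ is a Lévy process starting from $0$ and $\tau$ is an independent exponential variable with rate $\lambda>0$, then the points $(x_i,y_i)$ recording the horizontal length $x_i$ and the vertical increment $y_i$ of each face of the \cmin\ of $(X_u)_{0\le u\le\tau}$ form a Poisson point process on $\IR^+\times\IR$ with intensity
\[
\frac{e^{-\lambda x}}{x}\,\IP(X_x\in dy)\,dx.
\]

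First I would specialize this statement to standard Brownian motion $B$ with $\lambda=1$. Since $B_x$ has density $(2\pi x)^{-1/2} e^{-y^2/(2x)}$, the intensity on lengths and vertical increments reads
\[
\frac{1}{x\sqrt{2\pi x}}\,\exp\!\left(-x-\frac{y^2}{2x}\right)dx\,dy.
\]
To convert increments into slopes I apply the smooth bijection $(x,y)\mapsto(x,s)$ with $s=y/x$, so that $dy=x\,ds$. The mapping theorem for Poisson point processes then gives that the lengths and slopes form a Poisson point process with intensity
\[
\frac{1}{\sqrt{2\pi x}}\,\exp\!\left(-x-\frac{x s^{2}}{2}\right)dx\,ds=\frac{\exp\{-x(2+s^{2})/2\}}{\sqrt{2\pi x}}\,dx\,ds,
\]
which is exactly \eqref{imbm}. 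As a cheap sanity check, the total expected length $\int x\,d\mu=\int_{0}^{\infty} e^{-x}dx=1$ matches $\IE[\Gamma_{1}]=1$, and the total expected vertical increment vanishes by symmetry, matching $\IE[B_{\Gamma_{1}}]=0$.

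The only nontrivial ingredient here is the general Lévy-process theorem, which is not proved in the excerpt but is the content of \cite{pub10,ap10}; the Brownian specialization is then a routine change of variables plus the Poisson mapping theorem. The main potential obstacle is therefore locating the exact form of the Lévy-process statement in the cited works and matching conventions (in particular the independence of $\Gamma_1$ and the identification of the Lévy killing rate). A self-contained alternative would be a direct Palm-calculus argument: size-biasing the proposed intensity by $x$ and normalizing should reproduce the joint law of the length and slope of the face containing an independent uniform point in $[0,\Gamma_{1}]$, which can be computed using Brownian scaling, the strong Markov property, and the memoryless property of $\Gamma_1$. However, invoking the Lévy-process framework is the most economical route and avoids the bookkeeping needed to handle countably many faces directly.
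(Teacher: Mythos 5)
Your proposal is correct, but it follows a different route from the paper's own proof. The paper does not invoke the full Poisson description of the \cmin\ of a L\'evy process on an exponential horizon; instead it works with the inverse-slope process $\tau_a=\inf\{t>0:\unC'(t)>a\}$, identified as the argmin of $B(t)-at$ on $[0,\Gamma_1]$, argues that $a\mapsto\tau_a$ is an increasing pure-jump process with independent increments (uniqueness of the minimum of Brownian motion with drift, plus independence of the pre- and post-minimum fragments, citing \cite{gp80,pub10}), and then verifies the Poisson ``master equation'' by matching the Laplace transform of $\tau_a$ with the classical fluctuation identity
$\IE e^{-t\tau_a}=\exp\{-\int_0^\infty(1-e^{-tx})e^{-x}x^{-1}\IP(B_x-ax<0)\,dx\}$ from Greenwood--Pitman or Bertoin. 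You instead quote the general theorem of \cite{pub10,ap10} giving intensity $e^{-\lambda x}x^{-1}\IP(X_x\in dy)\,dx$ for lengths and increments, and convert increments to slopes via $(x,y)\mapsto(x,s=y/x)$, $dy=x\,ds$, and the Poisson mapping theorem; your algebra and the sanity check on total expected length are right, and the paper itself notes in the introduction that the theorem ``can be derived from'' those references, so this is a legitimate shortcut. What the two approaches buy: yours is the most economical, but its entire content is outsourced to the general L\'evy-process theorem, so you must be careful that the cited statement really is in the lengths-and-increments form with the killing rate matching the rate of $\Gamma_1$ (the paper's Proposition 8, item 4, records exactly this change of variables, so the conventions do match); the paper's argument is closer to self-contained, resting only on the older fluctuation identity plus structural facts about the minimum, and it has the side benefit of introducing $\tau_a$ and its Laplace transform, which are reused later (the explicit formula $\IE e^{-t\tau_a}=(\sqrt{2+a^2}-a)/(\sqrt{2+a^2+2t}-a)$ and the Girsanov/Denisov computation around it).
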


We will pay special attention to the set of times of the vertices of the \cmin\ of a Brownian motion on 
$[0,1]$.  To this end, let
\ban
0 < \cdots < \alpha_{-2} < \alpha_{-1} < \alpha_0 < \alpha_1 < \alpha_2 < \cdots < 1 \label{alps}
\ee
with $\alpha_{-n} \downarrow 0$ and $\alpha_n \uparrow 1$ as $n \te \infty$ denote the times
of vertices of the convex minorant of a Brownian motion $B$ on $[0,1]$, arranged relative to
\ban
\alpha_0 := \argmin_{ 0 \le t \le 1 } B_t. \label{alp0}
\ee
Theorem \ref{bmpp} implicitly contains the distribution of the sequence $(\alpha_i)_{i\in\IZ}$.  This description is
precisely stated in the following corollary of Theorem \ref{bmpp}, which follows easily from Brownian scaling.
\begin{corollary}\label{lit}
If $\{(L_i, S_i), i \in \IZ\}$ are the lengths and slopes given by the Poisson point process with intensity measure \eqref{imbm},
arranged so that 
\ba
\cdots S_{-1}<S_{0}<0<S_1<S_2\cdots
\ee
then
\ba
\left(\alpha_n\right)_{n\in\IZ}\ed\left(\sum_{i\leq n}L_i\bigg/\sum_{i\in\IZ}L_i\right)_{n\in\IZ}.
\ee
\end{corollary}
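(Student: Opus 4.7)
The plan is to reduce the statement on $[0,1]$ to the known description on $[0,\Gamma_1]$ via Brownian scaling. The first observation is essentially combinatorial: the convexity of the minorant forces its faces to have strictly increasing slopes when read from left to right in time. Hence ordering the atoms $(L_i,S_i)$ of Theorem \ref{bmpp} so that $\cdots<S_{-1}<S_0<0<S_1<\cdots$ is the same as ordering the faces by their position on the time axis, and the sign change between $S_0$ and $S_1$ pinpoints the argmin. Writing $V_n^{(T)}$ for the $n$-th vertex time (with $V_0^{(T)}$ the argmin) of the convex minorant of a Brownian motion on $[0,T]$, we therefore have $V_n^{(T)}=\sum_{i\le n}L_i^{(T)}$, where $(L_i^{(T)},S_i^{(T)})$ are the face lengths and slopes ordered by slope.

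The main step is scaling. Let $W$ be a standard Brownian motion on $[0,1]$, and for $T>0$ set $B^{(T)}_u:=\sqrt T\,W_{u/T}$, $u\in[0,T]$. Then $B^{(T)}$ is a Brownian motion on $[0,T]$, and its convex minorant is precisely the corresponding time-space rescaling of the \cmin\ of $W$. In particular, the vertex times satisfy $V_n^{(T)}=T\alpha_n$ pointwise on the same $\omega$, and this relation is completely deterministic in $T$. Taking $T=\Gamma_1$ independent of $W$ then gives a Brownian motion $B^{(\Gamma_1)}$ on $[0,\Gamma_1]$ whose normalised vertex times $V_n^{(\Gamma_1)}/\Gamma_1$ agree, pathwise, with the vertex times $\alpha_n$ of the convex minorant of $W$.

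Finally, Theorem \ref{bmpp} identifies the joint law of the face lengths and slopes of $B^{(\Gamma_1)}$ as the Poisson point process with intensity \eqref{imbm}, so after the slope-ordering of the previous paragraph these are exactly the $(L_i,S_i)$ of the corollary. Since the total face length equals the length of the underlying interval, $\sum_{i\in\IZ}L_i=\Gamma_1$ almost surely. Combining the three observations,
\[
\alpha_n\;\stackrel{d}{=}\;\frac{V_n^{(\Gamma_1)}}{\Gamma_1}\;=\;\frac{\sum_{i\le n}L_i}{\sum_{i\in\IZ}L_i},
\]
jointly in $n\in\IZ$, which is the claim.

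There is no real obstacle here; the only points that require minor care are (i) checking that the slope-order and time-order of faces coincide (convexity), and (ii) making sure the scaling identity $V_n^{(T)}=T\alpha_n$, derived for deterministic $T$, transfers to $T=\Gamma_1$ by taking $\Gamma_1$ independent of $W$ and using Fubini/regular conditional laws. Both are routine.
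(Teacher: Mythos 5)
Your argument is correct and is essentially the proof the paper intends: the corollary is stated as an easy consequence of Theorem \ref{bmpp} via Brownian scaling, and you simply make explicit the two routine points (slope order equals time order by convexity, and the pathwise scaling identity transferred to the random horizon $\Gamma_1$ by conditioning). Nothing to add.
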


Our second description provides a Markovian recursion for the vertices of the \cmin\ of a Brownian meander (and Bessel(3) process and bridge), which applies to
Brownian motion on a finite interval through Denisov's decomposition at the minimum \cite{MR726906} - background on these concepts is provided in
Section \ref{background}.
In our setting, Denisov's decomposition of Brownian motion on $[0,1]$ states that conditional on $\alpha_0$,
the pre and post minimum processes are independent Brownian meanders of appropriate lengths.  
We now make the following definition.
\begin{definition}
We say that a sequence of random variables $(\tau_n, \rho_n)_{n\geq0}$ satisfies the \tr\ recursion if
for all $n \ge 0$:
\ba
\rho_{n+1} = U_n \rho_{n} 
\ee
and
\ba
\tau_{n+1} = \frac{ \tau_n  \rho_{n+1}^2 }{ \tau_n Z_{n+1}^2 + \rho_{n+1}^2 }
\ee
for the two independent sequences of i.i.d. uniform $(0,1)$ variables $U_n$ and i.i.d. squares of standard normal random variables $Z_n^2$, both independent of $(\tau_0, \rho_0)$.
\end{definition}

\begin{figure}[h]
\begin{center}
\includegraphics[scale=.75]{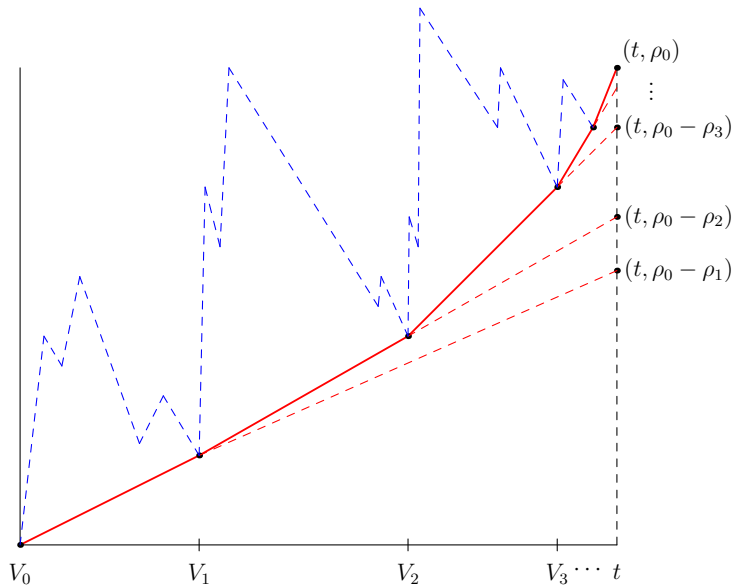}
\end{center}
\caption{An illustration of the notation of Theorem \ref{mean}.  The dashed line represents a Brownian meander of length $t$,
and the solid line its \cmin.  Note also that $V_i:= t-\tau_i$ for $i=0, 1, \ldots$}\label{22}
\end{figure}

\begin{theorem}\label{mean}
Let $(X(v), 0 \le v \le t)$ be a Brownian meander of length $t$, and let
$(\unC(v), 0 \le v \le t)$ be its \cmin. The vertices of $(\unC(v), 0 \le v \le t)$ occur at times
$0 = V_0 < V_1 < V_2  < \cdots$ with $\lim_n V_n = t$. Let $\tau_n:= t - V_n$ so $\tau_0 = t > \tau_1 > \tau_2 > \cdots$
with $\lim_n  \tau_n  = 0$. Let $\rho_0 = X(t)$ and for $n\geq1$ let $\rho_0- \rho_n$ denote the intercept at time $t$ of the line extending the segment of the
\cmin\ of $X$ on the interval $(V_{n-1}, V_n)$. The \cmin\ $\unC$ of $X$ is uniquely determined by the sequence
of pairs $(\tau_n, \rho_n)$ for $n = 1, 2, \ldots$ which satisfies the \tr\ recursion with
\eq
\rho_0 \ed \sqrt{2 t \Gamma_1 } \mbox{ and } \tau_0 = t,
\en
where $\Gamma_1$ is an exponential random variable with rate one.
\end{theorem}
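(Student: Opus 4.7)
My plan is to prove the theorem by induction on $n$. The key reduction for the induction step is the following Markovian claim: conditional on the first vertex data $(V_1, X(V_1))$, the shifted and tilted process $\tilde X(w) := X(V_1 + w) - X(V_1) - \sigma_1 w$ on $[0, \tau_1]$ (where $\sigma_1 = X(V_1)/V_1$) is distributed as a Brownian meander of length $\tau_1$ with endpoint $\rho_1$. Indeed, the first-face property of $V_1$ forces $X(u) > \sigma_1 u$ on $(V_1, t]$, which, since $X(V_1) = \sigma_1 V_1$, is equivalent to $\tilde X > 0$ on $(0, \tau_1]$; combined with the conditional independence of pre- and post-$V_1$ pieces given $(V_1, X(V_1))$, this identifies $\tilde X$ as a Brownian bridge from $0$ to $\rho_1$ of duration $\tau_1$ conditioned to stay positive, which is the law of a meander of length $\tau_1$ with endpoint $\rho_1$. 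Iterating this reduction, the theorem will follow once the base case $n = 0$ is established.

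For the base case I would use a time inversion. By Brownian scaling reduce to $t = 1$, and note that the meander conditional on $X(1) = \rho_0$ is a Bessel$(3)$ bridge from $0$ to $\rho_0$ in time $1$. Define $\tilde R(u) := u X(1/u)$ for $u \ge 1$. The time-inversion property of Bessel$(3)$ from $0$, together with the Markov property at time $1$, gives that $(\tilde R(u), u \ge 1)$ is an unconditioned Bessel$(3)$ process starting from $\rho_0$. Since $X(v)/v = \tilde R(1/v)$, we read off $\sigma_1 = \min_{u \ge 1} \tilde R(u)$ and $V_1 = 1/T_{\min}$, where $T_{\min}$ is the time of infimum of $\tilde R$. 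Two classical Bessel$(3)$ facts then complete the base case: (i) the running infimum of a Bessel$(3)$ from $\rho_0$ is $\rho_0 U$ with $U \sim \mathrm{Uniform}(0,1)$, giving $\rho_1 = \rho_0(1-U)$ and hence $\rho_1/\rho_0 \sim \mathrm{Uniform}(0,1)$; (ii) conditional on the infimum value $m$, the first-hit time $T_m = T_{\min} - 1$ of level $m$ by Bessel$(3)$ from $\rho_0$ has the inverse-Gaussian density
\[
\frac{\rho_0 - m}{\sqrt{2\pi s^3}}\exp\!\left(-\frac{(\rho_0-m)^2}{2s}\right), \qquad s > 0.
\]
A change of variables from $T_m$ to $Y := (\rho_0-m)^2/T_m$ yields the $\chi^2_1$ density $\frac{1}{\sqrt{2\pi y}} e^{-y/2}$ independently of $m$. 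Since $V_1 = 1/(1+T_m)$ and $\tau_1 = T_m/(1+T_m)$, a brief calculation gives $\rho_1^2 V_1/(\tau_0 \tau_1) = (\rho_0-m)^2/T_m = Y$, so $Y$ plays the role of $Z_1^2$ in the recursion. Thus $(U_0, Z_1^2) := (1-U, Y)$ is an independent $\mathrm{Uniform}(0,1) \times \chi^2_1$ pair, verifying the base case. The initial distribution $\rho_0 \ed \sqrt{2t\Gamma_1}$ is the standard Rayleigh endpoint law of the meander.

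The main obstacle is the induction step's Markov claim, since $V_1$ is determined by the global path structure and is not a stopping time. This is to be resolved by working with density-level conditioning on the joint law of $(V_1, X(V_1))$: given these values, the Markov property of Brownian motion at time $V_1$ provides conditional independence of pre- and post-$V_1$ pieces, and the post-piece's condition of lying above the first-face line becomes, after the tilt by $\sigma_1 w$, the positivity condition characterizing a meander of length $\tau_1$ with endpoint $\rho_1$.
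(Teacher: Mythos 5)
Your overall route is in fact the paper's: reduce by scaling, condition on the endpoint so the meander becomes a $BES(3)$ bridge (Imhof/Proposition \ref{imh} in the paper), invert time to obtain a $BES(3)$ process started from $\rho_0$, and read off the first face from Williams' decomposition at the ultimate minimum (uniform law of the minimum value, Brownian hitting-time law for its duration). Your base case, including the change of variables showing that $\rho_1^2 V_1/(\tau_0\tau_1)$ is $\chi^2_1$ and independent of the uniform variable, reproduces the paper's derivation of the joint law of the first face, and conditioning on $\rho_0$ throughout is legitimate since the recursion's driving variables are required to be independent of $(\tau_0,\rho_0)$ and $\rho_0\ed\sqrt{2t\Gamma_1}$ is the meander's endpoint law.

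The gap is in the induction step. You correctly isolate the needed claim --- conditionally on the first-face data, the tilted post-$V_1$ fragment is a $BES(3)$ bridge from $(0,0)$ to $(\tau_1,\rho_1)$, independent of the pre-$V_1$ fragment --- but the justification you propose, ``density-level conditioning'' plus ``the Markov property of Brownian motion at time $V_1$,'' is not a proof. Since $V_1$ is not a stopping time, the Markov property gives no conditional independence of past and future given $(V_1, X(V_1))$; for a general path-dependent random time such independence simply fails, and here it is precisely a path-decomposition statement, i.e.\ the hard content of the step. That content is already in the toolkit you use for the base case: it is the third assertion of Williams' decomposition (the paper's Proposition \ref{willdecomp}): given the value and the time of the ultimate minimum of the inverted $BES^{\rho_0}(3)$ process, the pre-minimum piece, reversed and lowered by the minimum, is a $BES(3)$ bridge, independent of the post-minimum piece, which is a $BES^0(3)$. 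Transporting this statement back through the time inversion identifies the tilted post-$V_1$ fragment of $X$ as a $BES(3)$ bridge of length $\tau_1$ ending at $\rho_1$ (and, as a byproduct, the pre-$V_1$ fragment minus the first face as a Brownian excursion), after which your iteration coincides with the paper's argument. So the fix is to replace the Markov-at-$V_1$ appeal by the full Williams decomposition (or an equivalent last-exit decomposition); as written, the crucial conditional independence is asserted rather than proved.
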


Once again, Theorem \ref{mean} implicitly contains the distribution of the sequence $(\alpha_i)_{i\in\IZ}$, as described in the following corollary
which follows from Denisov's decomposition and Brownian scaling.
\begin{corollary}\label{lighted}
Let $0 = 1 - \tau_0 < 1 - \tau_1  < \cdots$ and $0 = 1 - \htau_0 < 1 - \htau_1  < \cdots$ be the times of the vertices of the \cmin s of
two independent and identically distributed standard Brownian meanders.  Then the sequence $(\alpha_i)_{i\in\IZ}$ of times of vertices 
of the \cmin\ of Brownian motion on $[0,1]$  may be
represented for $n\geq0$ as 
\ba
\alpha_{-n} & = \tau_{n} \alpha_0, \, \\
\alpha_{n} & = 1 - \htau_n ( 1 - \alpha_0 ) \ed 1 - \alpha_{-n}, 
\ee
where $\alpha_0$ is independent of the sequences $(\tau_i)_{i\geq0}$ and $(\htau_i)_{i\geq0}$.
\end{corollary}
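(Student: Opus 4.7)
The plan is to apply Denisov's decomposition at the minimum to split the Brownian motion on $[0,1]$ into two independent meander fragments attached to $(\alpha_0, B_{\alpha_0})$, identify how the convex minorant on each side inherits its vertices from the corresponding meander via Theorem \ref{mean}, and finally use Brownian scaling to reduce meanders of random lengths $\alpha_0$ and $1-\alpha_0$ to independent copies of a standard meander.

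More concretely, conditional on $\alpha_0$, Denisov's decomposition says that $X_1(s) := B_{\alpha_0 - s} - B_{\alpha_0}$ for $s \in [0, \alpha_0]$ and $X_2(s) := B_{\alpha_0 + s} - B_{\alpha_0}$ for $s \in [0, 1 - \alpha_0]$ are independent Brownian meanders of lengths $\alpha_0$ and $1-\alpha_0$. Since the greatest convex minorant is invariant under vertical translation and under reflection of the time axis, the convex minorant $\unC$ of $B$ on $[0, \alpha_0]$ equals $B_{\alpha_0} + \unC_1(\alpha_0 - t)$, where $\unC_1$ is the convex minorant of $X_1$; its vertex times are precisely the images of the vertex times of $\unC_1$ under $s \mapsto \alpha_0 - s$. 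Similarly, the convex minorant of $B$ on $[\alpha_0, 1]$ is $B_{\alpha_0} + \unC_2(t - \alpha_0)$, with vertex times $\alpha_0 + (\text{vertex times of } \unC_2)$.

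Brownian scaling lets us replace the meander of length $t$ by $\sqrt{t}\,X(\cdot/t)$ for a standard meander $X$, under which vertex times scale by a factor $t$. Applying Theorem \ref{mean} and this scaling to $X_1$: if $(\tau_n)_{n\ge 0}$ are the quantities of Theorem \ref{mean} for a standard meander, the vertex times of $\unC_1$ are $(\alpha_0(1 - \tau_n))_{n \ge 0}$, so
\ba
\alpha_{-n} = \alpha_0 - \alpha_0(1 - \tau_n) = \alpha_0 \tau_n.
\ee
The analogous computation for $X_2$ with an independent copy $(\htau_n)$ gives $\alpha_n = \alpha_0 + (1-\alpha_0)(1 - \htau_n) = 1 - (1-\alpha_0) \htau_n$. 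Because the (conditional) laws of the rescaled meanders do not depend on $\alpha_0$, the sequences $(\tau_n)$ and $(\htau_n)$ are independent of each other and of $\alpha_0$. The distributional identity $\alpha_n \ed 1 - \alpha_{-n}$ then follows from $(\tau_n) \ed (\htau_n)$ combined with the symmetry $\alpha_0 \ed 1 - \alpha_0$, which comes from $B \ed -B$ swapping argmin and argmax.

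The only real point requiring care is the time reversal on the pre-minimum side: the meander $X_1$ runs backwards in time from $\alpha_0$ to $0$, so one must verify that reflection of the time axis sends the vertex times of $X_1$'s convex minorant to those of $B$'s convex minorant on $[0, \alpha_0]$ with the correct indexing $(n \leftrightarrow -n)$. Once this bookkeeping is in place, the rest is a direct application of Denisov's decomposition together with Brownian scaling, as the statement suggests.
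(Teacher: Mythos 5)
Your argument is correct and follows exactly the route the paper intends: Denisov's decomposition at $\alpha_0$, the vertex description of Theorem \ref{mean} for each meander fragment, and Brownian scaling to reduce to standard meanders, with the time reversal on the pre-minimum side handled properly (the paper states the corollary without detail, citing precisely these ingredients). Your bookkeeping, including the joint identity $(\htau_n, 1-\alpha_0) \ed (\tau_n, \alpha_0)$ giving $\alpha_n \ed 1 - \alpha_{-n}$ and the independence of the rescaled sequences from $\alpha_0$, fills in the details consistently with the paper's approach.
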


Corollaries \ref{lit} and \ref{lighted} provide a bridge
between the two descriptions of Theorems \ref{bmpp} and \ref{mean} so that each of these descriptions is implied by the other.
More precisely, we have the following (Brownian free) formulation, where here and below for $s>0$, $\Gamma_s$ denotes a gamma random variable with density
\ba
\frac{x^{s-1}e^{-x}}{\Gamma(s)}, \hspace{5mm} x>0,
\ee
and where $\Gamma(s)$ denotes the gamma function. 
\begin{theorem}\label{equiv}
If the sequence of random variables $(\tau_n, \rho_n)_{n\geq0}$ satisfies the \tr\ recursion with 
\ban
\tau_0\ed\Gamma_{1/2}, \hspace{6mm} \rho_0\ed\sqrt{2\Gamma_{1/2}\Gamma_1}, \label{inici}
\ee
where $\Gamma_{1/2}$ and $\Gamma_1$ are independent, then the random set of pairs
\ba
\left\{ \left(\tau_{i-1}-\tau_i,\sum_{j=1}^i\frac{\rho_{j-1}-\rho_{j}}{\tau_{j-1}} \right): i\in\IN \right\}
\ee
forms a Poisson point process on $\IR^+\times\IR^+$ with intensity measure
\begin{align}
\frac{\exp\{-\frac{x}{2}\left(2+s^{2}\right)\}}  {\sqrt{2 \pi x}} \, ds\,dx, \hspace{5mm} x,s \geq0. \label{imbm2}
\end{align}

Conversely, if $\{(L_i, S_i):i\in\IN\}$ is the set of points of a Poisson point process with intensity measure
given by \eqref{imbm2}, ordered so that $S_0:=0<S_1<S_2<\ldots$ then the variables
\ba
\tau_i=\sum_{j=i+1}^\infty L_j \mbox{\,\, \rm{and} \,\,} \rho_i=\sum_{j=i+1}^\infty S_jL_j - S_i\sum_{j=i+1}^\infty L_j,\,\, i=0,1,2, \ldots
\ee
satisfy the \tr\ recursion with $(\tau_0, \rho_0)$ distributed as in \eqref{inici}.
\end{theorem}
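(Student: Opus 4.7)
The plan is to deduce Theorem~\ref{equiv} by coupling the two Brownian descriptions of Theorems~\ref{bmpp} and~\ref{mean} through Denisov's decomposition at the minimum. Take a Brownian motion $(B_u, 0 \le u \le \Gamma_1)$ run until an independent standard exponential time $\Gamma_1$, and let $\alpha_0 \Gamma_1 = \argmin_u B_u$. Write $X^+(v) := B_{\alpha_0 \Gamma_1 + v} - B_{\alpha_0 \Gamma_1}$ for the post-minimum fragment, which by Denisov's theorem is a Brownian meander of length $t := (1-\alpha_0)\Gamma_1$.

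First I would check that the pair $(t, X^+(t))$ has exactly the joint law~\eqref{inici}. The arcsine law for $\alpha_0$ and the beta-gamma identity give $t \ed \Gamma_{1/2}$, while the Rayleigh endpoint of a meander of length $t$ has the form $\sqrt{2 t \Gamma_1'}$ for an independent $\Gamma_1'$; substituting yields $(t, X^+(t)) \ed (\Gamma_{1/2}, \sqrt{2 \Gamma_{1/2} \Gamma_1})$ with the two gammas independent. Next, since the global minimum is a vertex of the \cmin\ of $B$, the positive-slope faces of that \cmin\ coincide with the faces of the \cmin\ of $X^+$. Applying Theorem~\ref{mean} (conditionally on $t$) to $X^+$ shows that the sequence $(\tau_n, \rho_n)_{n \ge 0}$ read off from its \cmin\ satisfies the \tr\ recursion with the above initial law, while Theorem~\ref{bmpp} combined with the restriction theorem for Poisson point processes (keeping only points with $s > 0$) says that the same faces form a PPP with intensity~\eqref{imbm2}. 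A telescoping identification of the $i$-th face slope in terms of the $(\tau, \rho)$ variables, via $\rho_{j-1}-\rho_j = (S_j - S_{j-1})\tau_{j-1}$ with $L_j = \tau_{j-1}-\tau_j$, then expresses the PPP atoms in exactly the form displayed in the theorem, completing the forward direction.

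For the converse, I would observe that the two explicit formulas in the statement define deterministic inverse maps between $(L_i, S_i)_{i \ge 1}$ and $(\tau_i, \rho_i)_{i \ge 0}$; a short Abel-summation check shows that $\sum_{j > i}(S_j - S_i)L_j$ agrees with $\rho_0 - \sum_{j \le i}(S_j - S_{j-1})\tau_{j-1}$, so the two constructions invert one another. Given this bijectivity, the forward direction already shows that applying the inverse map to a PPP with intensity~\eqref{imbm2} produces a sequence equidistributed with one generated by the recursion started from~\eqref{inici}; since whether a sequence satisfies the \tr\ recursion is a joint-distributional property of the sequence, the result follows.

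The conceptually delicate step, and the one I expect to demand the most care, is the transfer of Theorem~\ref{mean}'s Markov structure from a meander of deterministic length to one of random length $t \ed \Gamma_{1/2}$: this is ultimately a conditioning argument but must be phrased carefully, since the initial condition $\rho_0 \ed \sqrt{2 t \Gamma_1'}$ in Theorem~\ref{mean} intertwines $\rho_0$ with $t$. Everything else — the restriction property of Poisson processes, the beta-gamma identity, the Rayleigh law of the meander endpoint, and the telescoping algebra relating $(L,S)$ to $(\tau,\rho)$ — is routine.
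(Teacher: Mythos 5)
Your proposal is correct and follows essentially the same route as the paper: the paper's (very terse) proof likewise obtains the equivalence by coupling Theorems \ref{bmpp} and \ref{mean} through the convex minorant of Brownian motion on $[0,\Gamma_1]$, using Denisov's decomposition, Brownian scaling, and the identity $\alpha_0\Gamma_1\ed\Gamma_{1/2}$ (equivalently, the length $\Gamma_{1/2}$ and Rayleigh endpoint of the post-minimum meander), which is exactly your construction. Your added details — the telescoping identification $S_i=\sum_{j\le i}(\rho_{j-1}-\rho_j)/\tau_{j-1}$, the restriction of the Poisson process to positive slopes, and the deterministic inversion handling the converse — are correct fillings-in of what the paper leaves implicit.
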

\begin{proof}
The theorem is proved by Brownian scaling in the relevant facts above coupled with the fundamental identity
$\alpha_0\Gamma_1\ed\Gamma_{1/2}$, where $\alpha_0$ and $\Gamma_{1}$ are independent.
\end{proof}
It is not at all obvious how to show Theorem \ref{equiv} directly.  Moreover, many simple 
quantities can be computed and related to both descriptions which we cannot independently show to be equivalent.  
For example, we have the following result which follows from Theorem \ref{equiv}, but for which we do not have an independent proof - see Section \ref{cqn} below.
\begin{corollary}\label{bewid}
Let $W$ and $Z$ standard normal random variables, $U$ uniform on $(0,1)$, and $R$ Rayleigh distributed having density $re^{-r^2/2}$, $r>0$.  If
all of these variables are independent, then 
\ba
\frac{ W^2 + (1-U) ^2 R^2 }{ 1 + U^2 R^2/Z^2 }  \ed Z^2.
\ee
\end{corollary}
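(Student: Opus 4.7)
The plan is to interpret the left-hand side of Corollary \ref{bewid} as $L_1(2+S_1^2)$, where $(L_1, S_1)$ is the length--slope pair of the face of smallest slope in the Poisson point process with intensity \eqref{imbm2} from Theorem \ref{equiv}, and then to show directly from the Poisson structure that this product has the distribution of $Z^2$. The equivalence in Theorem \ref{equiv} is what lets us equate the two viewpoints.

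For the first step, I would run one step of the $(\tau,\rho)$ recursion starting from the initial law \eqref{inici}. Set $W := \sqrt{2\tau_0}$, so $W^2 \ed \chi_1^2$, and $R := \rho_0/\sqrt{\tau_0}$, which by \eqref{inici} is Rayleigh and independent of $\tau_0$ (and, by construction, of $U = U_0$ and $Z = Z_1$). Using $L_1 = \tau_0 - \tau_1$ and $S_1 = (\rho_0 - \rho_1)/\tau_0$ (both identities from Theorem \ref{equiv}) together with the recursion, one finds
\[
L_1 \;=\; \frac{\tau_0 Z^2}{Z^2+U^2 R^2}, \qquad S_1 \;=\; \frac{(1-U)R}{\sqrt{\tau_0}},
\]
so that
\[
L_1(2+S_1^2) \;=\; \frac{Z^2\bigl(2\tau_0+(1-U)^2 R^2\bigr)}{Z^2+U^2 R^2} \;=\; \frac{W^2+(1-U)^2 R^2}{1+U^2 R^2/Z^2},
\]
which is precisely the left-hand side of Corollary \ref{bewid}.

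For the second step, I would apply the change of variables $(x, s) \mapsto (y, s)$ with $y = x(2+s^2)$ to the intensity \eqref{imbm2}. Since the Jacobian is $2 + s^2$, the pushed-forward intensity factors as
\[
\frac{e^{-y/2}}{\sqrt{2\pi y}}\,dy \;\cdot\; \frac{ds}{\sqrt{2+s^2}}.
\]
The $y$-factor is the $\chi_1^2$ density (a probability measure), while the $s$-factor is $\sigma$-finite with infinite total mass on $\IR^+$. Hence the transformed Poisson process is realized as a marked Poisson process whose ground points $(S_i)$ form a Poisson process on $\IR^+$ with intensity $ds/\sqrt{2+s^2}$ and whose marks $Y_i := L_i(2+S_i^2)$ are i.i.d.\ $\chi_1^2$ variables independent of the ground process. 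In particular $L_1(2+S_1^2) \ed Z^2$, and combining this with the first step gives Corollary \ref{bewid}.

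The only real insight required is the product-form change of variables on the Poisson intensity; once spotted, the $\chi_1^2$ distribution drops out, and the recursion-side manipulation is essentially forced by the shape of the $(\tau,\rho)$ recursion. The reason the identity resists a direct proof is that it is a shadow of the equivalence of two very different representations of the first face --- iterative via the recursion, and static via the Poisson intensity --- linked through Theorem \ref{equiv}.
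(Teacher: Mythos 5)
Your proposal is correct and follows essentially the same route as the paper: it is the argument behind Theorem \ref{remid} (and Proposition \ref{uui}), namely computing the first face $(L_1,S_1)$ from one step of the \tr\ recursion and combining this with the Poisson-side fact that length times (constant plus slope squared) is $\chi^2_1$ and independent of the slope, obtained by the change of variables $y=x(c+s^2)$ in the intensity. The only differences are cosmetic: you work with the intensity \eqref{imbm2} (factor $2+s^2$) and invoke the marking theorem at the level of the intensity, whereas the paper works with the length-$2\Gamma_{1/2}$ meander (factor $1+s^2$, Corollary \ref{mecor}) and performs the same substitution on the explicit joint density of the first face.
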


The layout of the paper is as follows.  Section \ref{background} contains the notation and much of the background used in the paper.
Sections \ref{pppd} and \ref{seq} respectively contain the Poisson and sequential descriptions of the \cmin\ of various Brownian paths and in Section \ref{cqn} we discuss
identities derived by relating the two descriptions.
In Section \ref{ppc} we derive various densities and transforms associated to the process of vertices and slopes of faces of the \cmin\
and in Section \ref{seqcon} we discuss some aspects (including a CLT) of the Markov process implicit in the sequential construction.

\section{Background}\label{background}

This section recalls some background and terminology for handling various Brownian path fragments.

Let $(B(t), t \ge 0 )$ denote a standard one-dimensional Brownian motion, abbreviated $BM^0$, and let $(R_3(t), t \ge 0 )$ denote a standard 3-dimensional Bessel process, abbreviated $BES^0(3)$,
defined as the square root of the sum of squares of $3$ independent copies of $B$. So $B(0) = R_3(0) = 0$, $E(B(t)^2) = t$ and $E(R_3(t)^2) = 3t$.
The notation $BM^x$ and $BES^x(3)$ will be used to denote these processes with a general initial value $x$ instead of $x = 0$, where necessarily $x \ge 0$ for $BES^x(3)$. 

\paragraph{Bridges}
For $0 \le s < t $ and real numbers $x$ and $y$, a {\em Brownian bridge from $(s,x)$ to $(t,y)$} is a process identical in law to $(B(u), s \le u \le t )$ given
$B(s) = x$ and $B(t) = y$, constructed to be weakly continuous in $x$ and $y$ for fixed $s$ and $t$. The explicit construction of all such bridges by suitable scaling
of the {\em standard} Brownian bridge from $(0,0)$ to $(1,0)$ is well known, as is the fact that for $B$ a $BM^0$ the process
$$
(B(t) - t B(1), 0 \le t \le 1)
$$
is a standard Brownian bridge independent of $B(1)$.

The family of {\em $BES(3)$ bridges from $(s,x)$ to $(t,y)$} is defined similarly for $0 \le s < t $ and $x,y \ge 0$. 
The $BES(3)$ bridge from $(s,x)$ to $(t,y)$ is a Brownian bridge from $(s,x)$ to $(t,y)$ conditioned to remain strictly positive on $(s,t)$.
For $x >0$ and $y >0$ the conditioning event for the Brownian bridge has a strictly positive probability, so the conditioning is elementary, and 
the assertion is easily verified. If either $x=0$ or $y=0$ the conditioning event has zero probability, and the assertion can either be interpreted
in terms of weak limits as either $x$ or $y$ or both approach $0$, or in terms of $h$-processes \cite{blum_exc83,chaumont-bravo09,fpy92}.

\paragraph{Excursions and meanders}

The $BES(3)$ bridge from $(0,0)$ to $(t,0)$ is known as a {\em Brownian excursion of length $t$}. This process can be constructed by Brownian scaling as
$(\sqrt{t} \ex( v/t) , 0 \le v \le t )$ where  $(\ex(u), 0 \le u \le 1)$ is the {\em standard Brownian excursion} of length $1$.
Intuitively, the Brownian excursion of length $t$ should be understood as $(B(v), 0 \le v \le t)$ conditioned on $B(0) = B(t) = 0$ and $B(v) > 0$ for all $0 < v < t$.
Similarly, conditioning $(B(v), 0 \le v \le t)$ on $B(0) = 0$ and $B(v) > 0$ for all $0 < v < t$, without specifying a value for $B(t)$, leads to the concept of a
{\em Brownian meander of length $t$}. 
This process can be constructed as $(\sqrt{t} \me( v/t) , 0 \le v \le t )$ where  $(\me(u), 0 \le u \le 1)$ is the {\em standard Brownian meander} of length $1$ which for our purposes is best considered via the following result of Imhof \cite{imhof84}.  
\begin{proposition}\cite{imhof84}\label{imh}
If $(R_3(t), 0\leq t \leq 1)$ is a $BES^0(3)$ process, then the process $(\me(u), 0 \le u \le 1)$ is absolutely continuous with respect to the law of
$(R_3(t), 0 \le t \le 1)$,
with density $(\pi/2)^{1/2}x^{-1}$, where $x=R_3(1)$ is the final value of $R_3$ .  Thus, 
$R_3$ and $\me$ share the same collection of $BES(3)$ bridges from $(0,0)$ to $(1,r)$ obtained 
by conditioning on the final value $r$.
\end{proposition}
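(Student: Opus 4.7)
The plan is to show the two processes are mutually absolutely continuous with a density depending only on the endpoint, by matching the marginal laws of the endpoint and recognizing both processes as $BES(3)$ bridges once the endpoint is fixed.

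First, I would exploit the standard construction of the meander as the rescaled post-last-zero fragment of Brownian motion,
\[
\me(u) = \frac{|B(g_1 + u(1-g_1))|}{\sqrt{1-g_1}}, \qquad 0 \le u \le 1,
\]
with $g_1 := \sup\{t \le 1 : B(t) = 0\}$, and compute the marginal of $\me(1)$ from the classical joint density of $(g_1, |B(1)|)$, namely the arcsine density of $g_1$ multiplied by the conditional Gaussian density of $|B(1)|$ given $g_1 = g$. A change of variables reduces the marginal density of $\me(1)$ to the integral of the arcsine density, producing the Rayleigh density $x e^{-x^2/2}$, $x > 0$. The marginal of $R_3(1)$ is the Maxwell density $\sqrt{2/\pi}\, x^2 e^{-x^2/2}$, so the ratio of the two endpoint densities is exactly $(\pi/2)^{1/2} x^{-1}$, as claimed.

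Second, I would verify that, conditional on the endpoint being $r > 0$, both $\me$ and $R_3$ have the law of the $BES(3)$ bridge from $(0,0)$ to $(1,r)$. For $R_3$ this is by definition of the bridge. For $\me$, the construction above expresses the meander, conditional on $g_1 = g$ and $|B(1)| = y$, as the absolute value of a Brownian bridge from $(0,0)$ to $(1,y)$ on the rescaled interval conditioned on strict positivity in the interior; a standard identification shows this positively-conditioned bridge is the $BES(3)$ bridge. Since both $\me$ and $R_3$ are thereby mixtures of the same $BES(3)$ bridges, the absolute continuity factor is forced to be the ratio of the endpoint densities, completing the proof.

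The main obstacle is making precise the conditioning at the non-stopping time $g_1$; this requires either a last-exit decomposition argument or a limiting procedure via the strong Markov property at $g_1 + \varepsilon$ followed by $\varepsilon \downarrow 0$, combined with the standard weak-limit characterization of $BES(3)$ bridges at zero endpoints. Once this technicality is handled, the density calculations and bridge identification are routine.
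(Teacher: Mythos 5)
The paper does not actually prove this proposition: it is imported verbatim from Imhof's paper, so there is no internal argument to compare yours against. Your outline is the standard route to Imhof's relation and is sound in structure: disintegrate both laws over the value at time $1$, identify the conditional law of each process given its endpoint $r$ with the $BES(3)$ bridge from $(0,0)$ to $(1,r)$, and conclude that the Radon--Nikodym derivative must be the ratio of the endpoint densities, Rayleigh over Maxwell, which is exactly $(\pi/2)^{1/2}x^{-1}$. Your caveat about conditioning at the non-stopping time $g_1$ is the right technical point; it can be handled by a last-exit decomposition, or avoided altogether by producing the meander from Denisov's decomposition at the minimum (Proposition \ref{dendecomp}), which the paper already records.

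One intermediate step is misstated, and as written it would derail the computation: given $g_1=g$, the law of $|B(1)|$ is not a folded Gaussian. The last-exit decomposition (equivalently, time reversal of the first passage to $0$ on $[g,1]$) gives $\IP(g_1\in dg,\,|B(1)|\in dy)=\frac{2}{\sqrt{2\pi g}}\cdot\frac{y}{\sqrt{2\pi(1-g)^3}}\,e^{-y^2/(2(1-g))}\,dg\,dy$, so the conditional density of $|B(1)|$ given $g_1=g$ is $\frac{y}{1-g}\,e^{-y^2/(2(1-g))}$, a Rayleigh law of scale $\sqrt{1-g}$; the size-biasing factor $y$ is essential. With it, $\me(1)=|B(1)|/\sqrt{1-g_1}$ is standard Rayleigh whatever the value of $g_1$, as you assert. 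If one literally multiplied the arcsine density by a folded Gaussian conditional density, the endpoint of the meander would come out half-normal rather than Rayleigh, and the resulting ratio to the Maxwell density of $R_3(1)$ would be proportional to $x^{-2}$, contradicting the proposition. Correct that one formula (or simply quote the Rayleigh law of the meander's endpoint, which also follows from Proposition \ref{dendecomp}); the rest of your argument then goes through.
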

We also say $(X(t), 0\leq t \leq T)$ is a Brownian meander of \emph{random} length $T>0$,
if $(T^{-1/2}X(uT), 0\leq u\leq 1)\ed(\me(u), 0 \leq u \leq 1)$, with $\me$ independent of $T$.
Informally, $X$ is a random path of random length. Formally, we may represent $X$ as
a random element of $C[0,\infty)$ by stopping the path at time $T$.

We recall the following basic path decomposition for standard Brownian motion run for a finite time due to Denisov \cite{MR726906}.
Recall that the Rayleigh distribution has density $re^{-r^2/2}$ for $r>0$, and the arcsine distribution has density $1/(\pi \sqrt{x (1-x)})$ on $[0,1]$.
\begin{prp}{dendecomp}\cite{MR726906}(Denisov's Decomposition).
Let $(B(u), u \geq0)$ be a Brownian motion, and let $T$ be the a.s. unique time that $B$ attains its minimum on $[0,1]$ and $M=B(T)$ its minimum. 
\begin{itemize}
\item $(T,M)\ed (\beta, -\sqrt{\beta}R)$, where $\beta$ has the arcsine distribution, $R$ has the Rayleigh distribution, and $\beta$ and $R$
are independent.
\item Given $T$, the processes $(B(T-u)-M, 0\leq u \leq T)$ and  $(B(T+u)-M, 0\leq u \leq 1-T)$ are independent Brownian meanders
of lengths $T$ and $1-T$, respectively.
\end{itemize}
\end{prp}
We will frequently use variations of this result derived by Brownian scaling and conditioning; for example we have the following proposition,
which can be viewed as a formulation of Williams decomposition \cite{MR0350881}.
\begin{proposition}\label{denvar}
Let $(B(u), u \geq0)$ be a Brownian motion and $\Gamma_1$ an exponential random variable with rate one
independent of $B$.  Let $T$ be the a.s. unique time 
that $B$ obtains its minimum on $[0, \Gamma_1]$, and $M=B(T)$ its minimum. 
\begin{itemize}
\item $(T,M)\ed (\Gamma_{1/2}, -\sqrt{\Gamma_{1/2}}R)$, where $2\Gamma_{1/2}$ is distributed as the square of a standard normal random variable, and $\Gamma_{1/2}$ and $R$ are independent.
\item The processes $(B(T-u)-M, 0\leq u \leq T)$ and  $(B(T+u)-M, 0\leq u \leq \Gamma_1-T)$ are independent Brownian meanders of 
lengths $T$ and $\Gamma_1-T$, respectively.
\end{itemize}
\end{proposition}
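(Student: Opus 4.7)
The plan is to derive Proposition \ref{denvar} from Denisov's original decomposition (Proposition \ref{dendecomp}) by Brownian scaling, combined with the classical beta--gamma identity: if $\beta$ is arcsine (i.e.\ Beta$(1/2,1/2)$) distributed and $\Gamma_1$ is Exp$(1)$ independently, then $\beta \Gamma_1 \ed \Gamma_{1/2}$.

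First I would condition on $\Gamma_1 = t$. Since $\Gamma_1$ is independent of $B$, conditionally $(B(u),\, 0 \le u \le t)$ is a Brownian motion of length $t$. Brownian scaling gives $\tilde B(u) := t^{-1/2} B(tu)$, a standard BM on $[0,1]$, to which Proposition \ref{dendecomp} applies: writing $\tilde T$ for the time and $\tilde M$ for the value of the minimum of $\tilde B$, one has $\tilde T$ arcsine distributed, $\tilde M \ed -\sqrt{\tilde T}\,R$ with $R$ Rayleigh independent of $\tilde T$, and the reflected pre- and post-minimum fragments of $\tilde B$ are independent standard Brownian meanders of lengths $\tilde T$ and $1-\tilde T$. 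Scaling back, $T = t\tilde T$, $M = t^{1/2}\tilde M \ed -\sqrt{T}\,R$, and the reflected pre- and post-minimum fragments of $B$ are independent meanders of lengths $T$ and $t - T$.

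Next I would remove the conditioning on $\Gamma_1$. Since $R$ is independent of $(\tilde T, \Gamma_1)$, the identity $M \ed -\sqrt{T}\,R$ persists with $R$ independent of $T$; and the beta--gamma identity gives $T = \Gamma_1 \tilde T \ed \Gamma_{1/2}$, which establishes the first bullet. For the second bullet, the scaled Denisov decomposition furnishes the conditional statement: given $(\Gamma_1, T) = (t, s)$, the two reflected fragments are independent Brownian meanders of the (now deterministic) lengths $s$ and $t - s$. To upgrade to unconditional independence of the two meander fragments, I would compute the joint density of $(T, \Gamma_1)$: at $(s,t)$ with $0 < s < t$ it equals
\[
\frac{e^{-t}}{\pi\sqrt{s(t-s)}},
\]
so the change of variables $(T,\Gamma_1 - T)$ factors this as a product of two $\Gamma_{1/2}$ densities. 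Hence $T$ and $\Gamma_1 - T$ are independent, each $\ed \Gamma_{1/2}$. Because the conditional law of the pre-minimum meander depends on $(\Gamma_1,T)$ only through the length $T$, and that of the post-minimum meander only through $\Gamma_1 - T$, unconditional independence of the two fragments follows.

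The main obstacle is purely bookkeeping: one must carefully track how the joint distribution of the arcsine $\tilde T$ and the exponential $\Gamma_1$ unravels into the $\Gamma_{1/2}$ marginals of $T$ and $\Gamma_1 - T$, and verify that the Rayleigh factor $R$ persists as independent of $T$ after integration. Neither observation is deep in isolation, but they must be assembled in the right order to deliver both bullets simultaneously.
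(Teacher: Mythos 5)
Your proof is correct and follows essentially the same route as the paper: condition on $\Gamma_1$, apply Denisov's decomposition (Proposition \ref{dendecomp}) via Brownian scaling, and use the elementary identity $\beta\Gamma_1 \ed \Gamma_{1/2}$ for $\beta$ arcsine independent of $\Gamma_1$. The only difference is that you spell out the factorization of the joint density of $(T,\Gamma_1-T)$ into a product of two $\Gamma_{1/2}$ densities to justify independence of the two random-length meanders, a detail the paper leaves implicit in its phrase ``after scaling the meanders appropriately.''
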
 
\begin{proof}
The first item follows by Brownian scaling and the elementary fact that for $\beta$ having the arcsine distribution and $\Gamma_1$ independent of $\beta$, $\Gamma_1\beta \ed \Gamma_{1/2}$.  The second
item is a restatement of the second item of Proposition \ref{dendecomp} after scaling the meanders appropriately. 
\end{proof}

We also have the following basic path decomposition for
$BES(3)$ due to Williams \cite{MR0350881}, which our results heavily exploit. See \cite{MR896736,gp80,MR942038,p75} for various
proofs.

\begin{prp}{willdecomp}\cite{MR0350881}
(Williams decomposition of $BES(3)$).
Let $R_3^r(u), u\ge 0 $ be a $BES^r(3)$ process, and $T$ the time that
$R_3^r$ attains its ultimate minimum. Then
\begin{itemize}
\item $R_3^r(T)$ has uniform distribution on $[0,r]$;
\item  given $R_3^r(T) = a$ the process $(R_3^r(u), 0 \le u \le T)$ is distributed as $(B(u), 0 \le u \le T_a)$ where $B$ is a $BM^r$ and $T_a$ is the first hitting time of $a$ by $B$.
\item  given $R_3^r(T) = a$ and $T=t$ the processes 
$(R_3^r(t-u)-a, 0 \le u \le t)$ and $(R_3^r(t+u)-a, 0 \le u < \infty)$ are
independent, with first a $BES(3)$ bridge from $(0,0)$ to $(t,r-a)$,
and the second a $BES^0(3)$ process.
\end{itemize}
\end{prp}
The third item of Proposition \ref{willdecomp} can be slightly altered by replacing the $BES(3)$ bridge by a Brownian first passage bridge as the proposition below indicates; see \cite{bcp03}.
\begin{proposition}\label{fpbes}
Let $(B(u), u\geq0)$ a standard Brownian motion and for fixed $a>0$, let $T_a=\inf\{t>0:B(t)=a\}$.  Then given $T_a=t$, the process $(a-B(T_a-u), 0\leq u \leq t)$ is equal in distribution
to a $BES(3)$ bridge from $(0,0)$ to $(t,a)$.
\end{proposition}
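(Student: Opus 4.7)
The plan is to deduce Proposition \ref{fpbes} as a direct consequence of Williams' decomposition of $BES(3)$, stated above as Proposition \ref{willdecomp}, combined with the elementary spatial reflection $x \mapsto a - x$.

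First I would apply Proposition \ref{willdecomp} with the starting value $r = a$, and focus on the event $\{R_3^a(T) = 0\}$. Although this is a null event, Williams' decomposition already provides the regular conditional distribution, since the ultimate minimum is uniformly distributed on $[0,a]$ with a smooth density. The second bullet of Proposition \ref{willdecomp}, specialized to the case where the minimum value equals $0$, says that conditionally on $R_3^a(T) = 0$, the pre-minimum fragment $(R_3^a(u), 0 \le u \le T)$ has the same law as $(\tilde B(u), 0 \le u \le \tilde T_0)$, where $\tilde B$ is a $BM^a$ and $\tilde T_0 = \inf\{u : \tilde B(u) = 0\}$.

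Next I would invoke the third bullet of Proposition \ref{willdecomp} in the same specialization: conditionally on $R_3^a(T) = 0$ and $T = t$, the time-reversed pre-minimum fragment $(R_3^a(t-u), 0 \le u \le t)$ is a $BES(3)$ bridge from $(0,0)$ to $(t,a)$. Chaining these two statements together yields the key identity: conditionally on $\tilde T_0 = t$, the reversed path $(\tilde B(t-u), 0 \le u \le t)$ of a Brownian motion from $a$ to its hitting time of $0$ is a $BES(3)$ bridge from $(0,0)$ to $(t,a)$.

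Finally, to obtain the statement of Proposition \ref{fpbes} as given, which concerns $B$ starting from $0$ and hitting $a$, I would apply the reflection $B(u) := a - \tilde B(u)$, under which $B$ is a $BM^0$ and the hitting time $\tilde T_0$ of $0$ by $\tilde B$ becomes the hitting time $T_a$ of $a$ by $B$. Substituting, $(a - B(T_a - u), 0 \le u \le t) = (\tilde B(t - u), 0 \le u \le t)$, so the identification is immediate.

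The only real obstacle is the care needed in interpreting the conditioning on the null event $\{R_3^a(T) = 0\}$; this is standard disintegration against the uniform law of the ultimate minimum, and is already built into the way Proposition \ref{willdecomp} is formulated, so there is nothing genuinely new to prove. An alternative, more analytic route would be to verify the identification directly via Doob $h$-transform calculations, using that $BES(3)$ arises as Brownian motion conditioned to stay positive (i.e. the $h$-transform of $BM$ killed at $0$ with $h(x) = x$), but the Williams-decomposition argument above is cleaner and fits naturally with the tools already in use in the paper.
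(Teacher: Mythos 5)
Your argument is sound in substance, and it is worth noting that the paper does not actually prove Proposition \ref{fpbes} at all: it simply cites \cite{bcp03}. So your derivation from Williams' decomposition (Proposition \ref{willdecomp}) followed by the reflection $x \mapsto a-x$ is a genuine proof, and it is the natural one given the tools already in play in Section \ref{seq}. The one soft spot is your specialization to the null event $\{R_3^a(T)=0\}$: the conditional laws in Proposition \ref{willdecomp} are regular conditional distributions, determined only up to null sets of the uniform law of the minimum, so asserting their validity at the single point $0$ needs an extra argument (weak continuity of the explicit versions as the minimum value tends to $0$, using that $BES(3)$ bridges are weakly continuous in their endpoints and that $T_m \downarrow T_0$ a.s.\ for $BM^a$ as $m \downarrow 0$); the remark that the minimum has a density does not by itself pin down the conditional law at an individual point. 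This is easily repaired, and in fact you can avoid the issue entirely while keeping your argument intact: apply Proposition \ref{willdecomp} to a $BES^r(3)$ with $r>a$ and condition on the minimum value $m:=r-a\in(0,r)$, an interior value for which the conditioning is unproblematic. Bullet two identifies the pre-minimum fragment with $BM^r$ run to $T_m$; bullet three says that, given the duration is $t$, the time-reversed fragment shifted down by $m$ is a $BES(3)$ bridge from $(0,0)$ to $(t,r-m)=(t,a)$; subtracting $m$ turns the fragment into a $BM^a$ run to its hitting time of $0$, and your reflection then gives exactly the statement of Proposition \ref{fpbes} for every $a>0$. With that modification the proof is complete and self-contained within the paper's framework.
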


%
%
%

\section{Poisson point process description}\label{pppd}

In this section we first prove Theorem \ref{bmpp} and then collect some facts about the Poisson point process description contained there.
\begin{proof}[Proof of Theorem \ref{bmpp}]
Let $(\unC(t), 0 \leq t \leq \Gamma_1)$ be the convex minorant of a Brownian motion on $[0, \Gamma_1]$ and let $\unC'(t)$ denote 
the right derivative of $\unC$ at $t$.  
Let $\tau_a=\inf\{t>0:\unC'(t)>a\}$, and note that outside of values of slope of the \cmin\, we can alternatively define $\tau_a=\argmin\{B(t)-at:t>0\}$.  
Now, $(\tau_a, a\in\IR)$ contains all the information about the \cmin\ we need since 
the set
\ba
\{(a,\tau_a-\tau_{a-}): \tau_a-\tau_{a-}>0\}
\ee
correspond 
to slopes and lengths of the \cmin.  

In order to prove the theorem, we basically
need to show that the process $\tau_a$ is an increasing pure jump process with independent increments
with the appropriate Laplace transform. 
Due to the description of $\tau_a$ as the time of the minimum of
Brownian motion with drift on $[0,\Gamma_1]$,
the assertion of pure jumps follows from uniqueness of the minimum of Brownian motion with drift, and the independent increments from the independence of the pre and post minimum processes - see \cite{gp80}
(a more detailed argument of these assertions can be found in  \cite{pub10}).

From this point we only need to show that
the Laplace transform of $\tau_a$ is equal to the corresponding quantity of the ``master equation" of the Poisson point process with intensity measure given by \eqref{imbm} (as this is characterizing in our setting).  
Precisely, we need to show
\begin{align}
\IE e^{-t\tau_a}&=\exp\left\{-\int_0^\infty \left(1-e^{-tx}\right)\int_{-\infty}^a\frac{\exp\{-\frac{x}{2}\left(2+s^{2}\right)\}}  {\sqrt{2 \pi x}}ds dx \right\}.\label{Lapta}
\end{align}
From \cite{gp80} (or \cite{MR1406564} Chapter VI, Theorem 5), we have that 
\begin{align}
\IE e^{-t\tau_a}&=\exp\left\{-\int_0^\infty \left(1-e^{-tx}\right)e^{-x}x^{-1}\IP(B_x-ax<0) dx \right\},\notag 
\end{align}
which is \eqref{Lapta}.
\end{proof}

The next set of results can easily be read from the intensity measure \eqref{imbm}.
\begin{proposition}\label{mnn}
\mbox{}
\begin{enumerate}
\item The slopes of the faces of the \cmin\ of a Brownian motion on $[0,\Gamma_1]$ are given by a Poisson point process with intensity measure
\ba
\int_{0}^\infty \frac{\exp\{-\frac{x}{2}\left(2+s^{2}\right)\}}  {\sqrt{2 \pi x}} dx \, ds = \frac{1}{\sqrt{2+s^2}} ds, \hspace{5mm} s\in\IR.
\ee
\item The lengths of the faces of the \cmin\ of a Brownian motion on $[0,\Gamma_1]$ are given by a Poisson point process with intensity measure
\begin{align}
\int_{-\infty}^\infty \frac{\exp\{-\frac{x}{2}\left(2+s^{2}\right)\}}  {\sqrt{2 \pi x}} ds \,dx = \frac{e^{-x}}{x} dx, \hspace{5mm} x>0. \label{1in}
\end{align}
\item The mean number of faces of the \cmin\ of a Brownian motion on $[0,\Gamma_1]$ having slope in the interval $[a,b]$ is 
\ba
\int_{a}^b\int_{0}^\infty \frac{\exp\{-\frac{x}{2}\left(2+s^{2}\right)\}}  {\sqrt{2 \pi x}} dx ds
&= \log\left(\frac{b+\sqrt{2+b^2}}{a+\sqrt{2+a^2}}\right).
\ee
\item The intensity measure of the Poisson point process of lengths $x$ and increments $y$ of the
\cmin\ of a Brownian motion on $[0,\Gamma_1]$ can be obtained by making the change of 
variable $s=y/x$ in the intensity measure \eqref{imbm} which yields
\begin{align}
\frac{\exp\{-\frac{x}{2}\left(2+(y/x)^{2}\right)\}}  {\sqrt{2 \pi x^3}} \,dx \,dy \hspace{5mm} x>0, y\in\IR. \label{imbi}
\end{align}
\end{enumerate}
\end{proposition}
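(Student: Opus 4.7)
The plan is simple: all four items are consequences of Theorem \ref{bmpp} obtained by marginalizing, integrating, or changing variables in the joint intensity
\begin{equation*}
\mu(dx,ds) = \frac{\exp\{-\tfrac{x}{2}(2+s^{2})\}}{\sqrt{2\pi x}}\, ds\, dx.
\end{equation*}
The mapping theorem for Poisson processes gives parts (1), (2), (4) directly once the relevant images/marginals are computed, and a standard formula for the mean measure of a PPP gives (3).

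For item (1), I would integrate $\mu$ over $x \in (0,\infty)$ using the standard identity $\int_0^\infty x^{-1/2} e^{-cx}\, dx = \sqrt{\pi/c}$ with $c = (2+s^2)/2$; this immediately collapses the $1/\sqrt{2\pi x}$ against $e^{-x(2+s^2)/2}$ to yield $(2+s^2)^{-1/2}\, ds$. For item (2), I would instead integrate out $s \in \IR$, completing the Gaussian integral $\int_{-\IR} e^{-xs^2/2}\, ds = \sqrt{2\pi/x}$, which cancels the $\sqrt{2\pi x}$ denominator up to $x$ and leaves the factor $e^{-x}$, giving the classical $e^{-x}/x\, dx$ intensity of faces of the concave/convex minorant that one also sees in the Poisson–Dirichlet / arcsine setting.

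For item (3), the mean number of faces with slope in $[a,b]$ is, by Campbell's formula, simply $\int_a^b \mu(\IR^+, ds)$, i.e.\ the integral of the slope intensity from (1); recognizing the antiderivative of $(2+s^2)^{-1/2}$ as $\log(s+\sqrt{2+s^2})$ (equivalently $\mathrm{arcsinh}(s/\sqrt 2)$ up to an additive constant) gives the stated logarithmic expression. For item (4), I would apply the change of variable $(x,s) \mapsto (x,y)$ with $y = sx$, i.e.\ passing from (length, slope) to (length, increment); the Jacobian is $1/x$, so multiplying $\mu$ by $1/x$ and substituting $s = y/x$ produces $\exp\{-\tfrac{x}{2}(2+(y/x)^2)\}/\sqrt{2\pi x^3}\, dx\, dy$, as claimed. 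The only even mildly non-routine step is keeping track of this Jacobian and confirming that the pushforward of a PPP under the (measurable, bijective on its support) map $(x,s)\mapsto (x,xs)$ is again Poisson with the pushed-forward intensity, which is standard.

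There is no real obstacle: each item is a one-line computation plus an invocation of the mapping / marginal / Campbell theorems for Poisson point processes, all of which apply verbatim to the $\sigma$-finite intensity in Theorem \ref{bmpp}. I would present them in the order (1), (2), (3), (4) exactly as stated, without additional machinery.
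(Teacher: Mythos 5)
Your proposal is correct and is essentially the paper's own argument: the paper gives no further detail beyond noting that all four items ``can easily be read from the intensity measure'' \eqref{imbm}, which is exactly your program of marginalizing in $x$ or $s$, integrating the slope intensity via the antiderivative $\log\left(s+\sqrt{2+s^2}\right)$, and pushing forward under $(x,s)\mapsto(x,xs)$ with the factor $1/x$. The computations you outline all check out (apart from the harmless typo $\int_{-\IR}$ for $\int_{\IR}$ in item (2)), so nothing further is needed.
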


From this point, we can prove the following result, which can be read from \cite{gboom83}, see also \cite{jpfb09}.
\begin{proposition}\cite{gboom83}
The sequence of times of vertices of the \cmin\ of a Brownian motion on $[0,1]$, denoted $(\alpha_i)_{i\in\IZ}$, has accumulation points only 
at $0$ and $1$.
\end{proposition}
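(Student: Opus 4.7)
By Corollary~\ref{lit}, the distribution of the sequence $(\alpha_i)_{i\in\IZ}$ is encoded in the Poisson point process $\{(L_i,S_i)\}$ with intensity \eqref{imbm}, indexed so that the slopes $S_i$ are increasing in $i$ with $S_{-n}\to-\infty$ and $S_n\to+\infty$. Since the setup already records $\alpha_{-n}\downarrow 0$ and $\alpha_n\uparrow 1$, it suffices to show that for every $\epsilon\in(0,1/2)$ only finitely many $\alpha_i$ lie in $[\epsilon,1-\epsilon]$ almost surely; then accumulation can occur only at $0$ and $1$.

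\textbf{Step 1 (local finiteness of slopes).} By Proposition~\ref{mnn}(1), the slopes of faces of the \cmin\ of Brownian motion on $[0,\Gamma_1]$ form a Poisson point process on $\IR$ with intensity $(2+s^2)^{-1/2}\,ds$, which is locally integrable on $\IR$ while having infinite total mass on each of $(-\infty,0)$ and $(0,\infty)$. Brownian scaling takes the \cmin\ on $[0,\Gamma_1]$ to the \cmin\ on $[0,1]$ by the map $t\mapsto t/\Gamma_1$ on the time axis and $x\mapsto x/\sqrt{\Gamma_1}$ on the spatial axis, so that face slopes get multiplied by $\sqrt{\Gamma_1}$. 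Since $\Gamma_1>0$ almost surely, this rescaling sends bounded sets to bounded sets, and hence the slopes of the faces of the \cmin\ of Brownian motion on $[0,1]$ also form a locally finite random subset of $\IR$.

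\textbf{Step 2 (slope bound on the middle).} The \cmin\ on $[0,1]$, being a convex real-valued function on $[0,1]$, has finite one-sided derivatives at every interior point. In particular, denoting these derivatives of the minorant by $c'_{-}$ and $c'_{+}$, the quantity
\[
M \;:=\; \max\bigl(|c'_{-}(\epsilon)|,\,|c'_{+}(1-\epsilon)|\bigr)
\]
is almost surely finite. By monotonicity of the right derivative of a convex function, the slope of every face that meets $[\epsilon,1-\epsilon]$ lies in $[-M,M]$. Combined with Step 1, only finitely many faces meet $[\epsilon,1-\epsilon]$ almost surely, so only finitely many $\alpha_i$ lie in this interval. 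Intersecting over a countable sequence $\epsilon\downarrow 0$ completes the proof.

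\textbf{Main obstacle.} Once the Poisson description is in hand, the argument is essentially bookkeeping. The one point requiring a little care is that the slope intensity $(2+s^2)^{-1/2}\,ds$ has infinite total mass (so there are infinitely many vertices, and some accumulation must occur), while remaining locally integrable (so no accumulation occurs on $[\epsilon,1-\epsilon]$); the divergence of the mass on each of $(-\infty,-M)$ and $(M,\infty)$ for every $M$ is precisely what forces accumulation at both $0$ and $1$ rather than at just one endpoint.
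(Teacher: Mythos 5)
Your proof is correct and follows essentially the same route as the paper: both arguments rest on the Poisson description (Proposition \ref{mnn}) giving only finitely many faces with slope in any bounded interval together with the infinite slope mass on each half-line, ruling out interior accumulation while forcing accumulation at the two endpoints. You merely make explicit the convexity/slope-bound step and pass to $[0,1]$ by scaling at the start rather than at the end, which the paper leaves implicit.
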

\begin{proof}
The faces of a \cmin\ are arranged in order of increasing slope, and Item 3 of Proposition \ref{mnn} implies
the mean number of faces of the \cmin\ of a Brownian motion on $[0,\Gamma_1]$ with slope in a given interval is finite.
Also note that that 
\ba
\int_{-\infty}^0\int_{0}^\infty \frac{\exp\{-\frac{x}{2}\left(2+s^{2}\right)\}}  {\sqrt{2 \pi x}} dx ds=\infty,
\ee
and hence that the sequence $(\Gamma_1\alpha_i)_{i\in\IZ}$ has
accumulation points only at zero and at $\Gamma_1$ (by symmetry in the integrand).  This last statement implies the result
for the sequence $(\alpha_i)_{i\in\IZ}$.
\end{proof}

Theorem \ref{bmpp} also provides a constructive description of the \cmin\ of Brownian motion on  $[0,\Gamma_1]$.  
%
\begin{theorem}
For $i\geq1$, let $W_i$ independent uniform $[0,1]$ variables and define  
\eq
\lb{stickb}
J_1:=  W_1, ~~J_2:= (1-W_1) W_2, ~~J_3:= (1-W_1) (1 - W_2) W_3, \ldots 
\en
If $B_1, B_2, \ldots$ are independent standard Brownian motions, then
the lengths and increments of the faces of the \cmin\ have the same distribution as
the points $(J_i,B_i(J_i))$.  The distribution of these points determine the distribution of the \cmin\
by reordering the lengths and
increment points with respect to increasing slope.  
\end{theorem}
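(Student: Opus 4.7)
The plan is to derive the stated construction directly from the Poisson point process of Theorem \ref{bmpp} in three steps: convert to (length, increment) coordinates and identify a marked PPP with Gaussian marks; recognize the resulting PPP of lengths via the classical stick-breaking representation of the Poisson--Dirichlet distribution; and pass from BM on $[0,\Gamma_1]$ to BM on $[0,1]$ via Brownian scaling, matching the almost-sure equality $\sum_i J_i = 1$.

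For step one, the change of variable $(x,s) \mapsto (x, y) = (x, sx)$ applied to the intensity \eqref{imbm}, as already recorded in Proposition \ref{mnn} item 4, shows that the (length, increment) pairs of faces of the \cmin\ of BM on $[0,\Gamma_1]$ form a PPP with intensity \eqref{imbi}. This intensity factors as the product of the length marginal $e^{-x}/x\, dx$ from Proposition \ref{mnn} item 2 and the conditional distribution $N(0,x)$ of the increment given the length $x$. By the marking theorem for Poisson processes, the (length, increment) multiset is thus equal in law to that obtained by drawing a PPP $\{L_i\}$ of intensity $e^{-x}/x$ and attaching conditionally independent marks $Y_i \ed B_i(L_i)$, where $B_1, B_2, \ldots$ are independent standard Brownian motions.

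For step two, I would invoke the classical identity that the PPP with intensity $e^{-x}/x$ on $(0,\infty)$ has total mass $T = \sum_i L_i$ distributed as $\mbox{Exp}(1)$, that the normalized multiset $\{L_i/T\}$ is independent of $T$ and distributed as $\mbox{PD}(0,1)$, and that the size-biased permutation of $\mbox{PD}(0,1)$ is $\mbox{GEM}(0,1)$, which is precisely the uniform stick-breaking sequence $(J_i)$ of \eqref{stickb}. As multisets, therefore, $\{L_i\} \ed \{\Gamma_1 J_i\}$ with $\Gamma_1 \sim \mbox{Exp}(1)$ independent of $(J_i)$; combined with the marking from step one this yields $\{(L_i, Y_i)\} \ed \{(\Gamma_1 J_i, B_i(\Gamma_1 J_i))\}$.

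For step three, Brownian scaling gives that, conditional on $\Gamma_1 = t$, the (length, increment) multiset for BM on $[0,t]$ is obtained from that for BM on $[0,1]$ by entrywise scaling by $(t, \sqrt{t})$; dividing out and invoking the scaling identity $B_i(\Gamma_1 J_i)/\sqrt{\Gamma_1} \ed B_i(J_i)$ (jointly in $i$, given the $(J_i)$) then delivers the stated equality in distribution with $\{(J_i, B_i(J_i))\}$ for the \cmin\ of BM on $[0,1]$. The concluding reordering sentence is the elementary geometric fact that a \cmin\ is determined by its face (length, increment) pairs listed in order of increasing slope. The main obstacle is the stick-breaking identity invoked in step two; although standard in the Poisson--Dirichlet literature, a self-contained verification would require either matching Laplace functionals of the PPP against those of $\{\Gamma_1 J_i\}$ or a recursive size-biased picking argument based on Palm theory for Poisson processes.
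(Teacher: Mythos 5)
Your proposal is correct and follows essentially the same route as the paper's proof: factoring the intensity \eqref{imbi} into the length intensity $e^{-x}/x\,dx$ with conditionally Gaussian increments is the paper's representation $(L_i,\sqrt{L_i}Z_i)$, the identification $\{L_i\}\ed\{\Gamma_1 J_i\}$ via the $\mbox{PD}(0,1)$/stick-breaking facts is exactly the fact the paper cites from Chapter 4 of \cite{MR2245368}, and the Brownian scaling step matches the paper's normalization $L_i^*=L_i/\sum_j L_j$ together with $\sum_i J_i=1$ a.s.
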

\begin{proof}
By comparing L\'evy measures, it is not difficult to see that the lengths and increments of the \cmin\ of $B$ on $[0, \Gamma_1]$
can be represented as $(L_i, \sqrt{L_i} Z_i)_{i\in\IZ}$, where $Z_i$ are independent standard normal random variables, and the $L_i$ are the
points of a Poisson point process with intensity given by \eqref{1in}. 
Thus, Brownian scaling implies the \cmin\ of a Brownian motion on $[0, 1]$ has lengths and increments given by 
\ba
(L_i^*, \sqrt{L_i^*} Z_i)_{i\in\IZ}, {\rm where} \,\, L_i^*=L_i/\sum_{j\in\IZ}L_j.
\ee
From this point, the result will follow if we show the following equality in distribution of point processes:
\ban
\{L_i^*\}_{i\in\IZ}\ed\{J_i\}_{i\in\IN}. \label{prt}
\ee

Following Chapter 4 of \cite{MR2245368},
for a $\Gamma_1$ random variable independent of the $J_i$, 
$\Gamma_1J_i$ are the points of a Poisson point process with intensity measure given by \eqref{1in}, so that
\ba
\{L_i\}_{i\in\IZ}\ed\{\Gamma_1J_i\}_{i\in\IN}.
\ee
Since the set $\{J_i\}_{i\in\IN}$ has sum equal to $1$ almost surely \cite{MR2245368},
$J_i = \Gamma_1J_i/\sum_{k\in\IN}\Gamma_1J_k$ almost surely so that \eqref{prt} now follows from the definition of $L_i^*$.
\end{proof}
\begin{remark}
The distribution of the ranked (decreasing) rearrangement of $\{J_i\}_{i\in\IN}$
is known as the Poisson-Dirichlet$(0, 1)$  distribution.  See \cite{MR2245368} for background.
\end{remark}

%

The next proposition clearly states a result we implicitly obtained in the proof of Theorem \ref{bmpp}.  It can be obtained by performing the integration in \eqref{Lapta}, 
but we also provide an independent proof.
\begin{proposition}
Let $(\unC(u), 0 \leq t \leq \Gamma_1)$ be the convex minorant of a Brownian motion on $[0, \Gamma_1]$ and let $\unC'(u)$ denote 
the right derivative of $\unC$ at $u$.  
For $\tau_a=\inf\{u>0:\unC'(u)>a\}$
as in the proof of Theorem \ref{bmpp} and $t>-1$, we have
\begin{align}
\IE e^{-t\tau_a}&=\frac{\sqrt{2+a^2}-a}{\sqrt{2+a^2+2t}-a}.\notag 
\end{align}
\end{proposition}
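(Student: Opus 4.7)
The plan is to verify the formula via the direct integration route hinted at in the paper, starting from equation \eqref{Lapta}. Since $\IP(B_x-ax<0)=\Phi(a\sqrt{x})$ with $\Phi$ the standard normal CDF, and writing $J(t)$ for the exponent in \eqref{Lapta}, the goal reduces to showing
\[
J(t)=\log\!\left(\frac{\sqrt{2+a^2}-a}{\sqrt{2+a^2+2t}-a}\right).
\]
First I would differentiate in $t$, so the $1-e^{-tx}$ becomes $x e^{-tx}$ and cancels the $x^{-1}$, giving
\[
J'(t)=-\int_0^\infty e^{-(1+t)x}\Phi(a\sqrt{x})\,dx.
\]

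Next I would evaluate this integral by one integration by parts with $u=\Phi(a\sqrt{x})$, $dv=e^{-(1+t)x}\,dx$, using $\Phi'(a\sqrt{x})\cdot a/(2\sqrt{x})$ as the derivative of $u$, and then the elementary identity $\int_0^\infty x^{-1/2}e^{-\mu x}\,dx=\sqrt{\pi/\mu}$ with $\mu=1+t+a^2/2$. A short computation yields
\[
J'(t)=-\frac{1}{2(1+t)}\left(1+\frac{a}{\sqrt{2+a^2+2t}}\right).
\]
On the other side, differentiating the candidate closed form and rationalizing $1/(\sqrt{2+a^2+2t}-a)$ by multiplying numerator and denominator by $\sqrt{2+a^2+2t}+a$ produces exactly this expression. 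The boundary condition $J(0)=0$, i.e.\ $\IE e^{0\cdot\tau_a}=1$, then pins down the constant of integration and finishes the proof.

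There is no conceptual obstacle; the only risky step is the bookkeeping of the Gaussian integration by parts. I would sanity-check it against the $a=0$ specialization, where the formula collapses to $(1+t)^{-1/2}$, matching the Laplace transform of the $\Gamma_{1/2}$ time of the minimum from Proposition~\ref{denvar}, and against the Wiener--Hopf identity $\IE e^{-t\tau_a}\,\IE e^{-t\tau_{-a}}=1/(1+t)$ obtained by splitting $X_u=B(u)-au$ at its minimum on $[0,\Gamma_1]$ and reversing time; the candidate formula satisfies this because $(\sqrt{2+a^2+2t}-a)(\sqrt{2+a^2+2t}+a)=2+2t$. One could alternatively promote this Wiener--Hopf factorization together with the $a=0$ base case into a fully independent derivation by a Cameron--Martin change of measure in $a$, but the direct integration above is shorter.
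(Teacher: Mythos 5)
Your proof is correct, but it is not the argument the paper gives: you carry out the direct integration of \eqref{Lapta} that the paper explicitly mentions ("it can be obtained by performing the integration in \eqref{Lapta}") and then declines to do, whereas the paper's written proof is probabilistic. Concretely, the paper applies Girsanov's theorem to rewrite $\IE e^{-t\tau_a}$ as an expectation for driftless Brownian motion killed at $\Gamma_1$, splits at the minimum using Denisov's decomposition (independence of the pre- and post-minimum fragments, Proposition \ref{denvar}, which gives $T_M\ed\Gamma_{1/2}$ and $-M\ed\sqrt{\Gamma_{1/2}}R$), and finishes with the joint transform $\IE\exp\{\alpha\Gamma_{1/2}+\beta\sqrt{\Gamma_{1/2}}R\}=(\sqrt{1-\alpha}-\beta/\sqrt{2})^{-1}$ of Lemma \ref{charden}; the resulting product of two such factors is exactly the Wiener--Hopf factorization you invoke only as a sanity check. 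Your route is shorter and purely computational: with $\IP(B_x-ax<0)=\Phi(a\sqrt{x})$, differentiating the exponent under the integral, one Gaussian integration by parts (the boundary term $\Phi(0)=1/2$ supplying the "$1$" in $J'(t)=-\tfrac{1}{2(1+t)}\bigl(1+\tfrac{a}{\sqrt{2+a^2+2t}}\bigr)$), and matching $J(0)=0$ do verify the closed form; I checked the algebra and it is right. What each buys: yours is self-contained given \eqref{Lapta} and requires nothing beyond calculus, while the paper's proof is genuinely independent of \eqref{Lapta} (hence serves as a cross-check on the point-process description) and exposes the probabilistic structure behind the factorized form of the answer. The only point worth a sentence in your write-up is that \eqref{Lapta} is derived for $t\ge 0$, so to get the stated range $t>-1$ you should note that both sides are analytic there (e.g. $\tau_a\le\Gamma_1$ gives finiteness of $\IE e^{-t\tau_a}$ for $t>-1$), which is routine.
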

\begin{proof}
Let
$\IE_a$ denote expectation with respect to a BM with drift $-a$ killed at $\Gamma_1$, and $M$ and $T_M$ denote respectively the minimum and time of the minimum of a given process (understood from context).  We now have
\begin{align}
\IE &e^{-t\tau_a}=\IE_a e^{-tT_M} \notag \\
&= \IE_0 \exp\left\{-tT_M-aB(\Gamma_1)-a^2\Gamma_1/2 \right\} \notag \\
&= \IE_0 \exp\left\{ \left(-t-\frac{a^2}{2}\right)T_M-aM-a(B(\Gamma_1)-M)-\frac{a^2}{2}(\Gamma_1-T_M) \right\} \notag \\
&= \IE_0 \exp\left\{ \left(-t-\frac{a^2}{2}\right)T_M-aM\right\} \notag \\
	&\hspace{1in}\times\IE_0\exp\left\{-a(B(\Gamma_1)-M)-\frac{a^2}{2}(\Gamma_1-T_M) \right\},   \notag 
\end{align}
where the second equality is a consequence of Girsanov's Theorem (as stated in Theorem 159 of \cite{Freedman1983} under Wald's identity), and the last by Denisov's decomposition at the minimum (specifically independence between the pre and post minimum processes).

Proposition \ref{denvar}
implies that both of $T_M$ and $\Gamma_1-T_M$ are distributed as $\Gamma_{1/2}$, and both of $-M$ and $B(\Gamma_1)-M$ are distributed 
as $\sqrt{\Gamma_{1/2}} R$, with $R$ an independent Rayleigh random variable.  The proposition now follows from Lemma \ref{charden} below.
\end{proof}
\begin{lemma}\label{charden}
If $R$ is Rayleigh distributed and $\Gamma_{1/2}$ has a Gamma$(1/2)$ distribution and the two variables are independent, then for $\alpha<1$ and $(2\alpha+\beta^2)<2$,
\ba
\IE \exp\left\{\alpha\Gamma_{1/2}+\beta\sqrt{\Gamma_{1/2}}R\right\}=\frac{1}{\sqrt{1-\alpha}-\frac{\beta}{\sqrt{2}}}.
\ee
\end{lemma}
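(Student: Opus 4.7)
My plan is a direct computation, conditioning first on $\Gamma_{1/2}$. Given $\Gamma_{1/2}=x$, the inner expectation is a Gaussian-type integral in $r$: completing the square via $\beta\sqrt{x}\,r - r^2/2 = -\tfrac{1}{2}(r-\beta\sqrt{x})^2 + \beta^2 x/2$ and writing $r = (r-\beta\sqrt{x}) + \beta\sqrt{x}$ splits it into an exact piece and a piece involving the standard normal cdf, yielding
\ba
\IE[e^{\beta\sqrt{x}\,R}] \;=\; 1 \,+\, \beta\sqrt{2\pi x}\,e^{\beta^2 x/2}\,\Phi(\beta\sqrt{x}).
\ee
Next I integrate this against the weighted Gamma$(1/2)$ density $e^{\alpha x}\cdot e^{-x}/\sqrt{\pi x}$. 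The ``$1$'' term yields the classical $\Gamma(1/2)$ integral equal to $(1-\alpha)^{-1/2}$, while the $\Phi$ term contributes
\ba
\beta\sqrt{2}\int_0^\infty e^{-\gamma^2 x}\,\Phi(\beta\sqrt{x})\,dx, \qquad \gamma^2 := 1-\alpha-\tfrac{\beta^2}{2},
\ee
which is positive under the stated hypotheses.

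I would then attack this remaining integral by integrating by parts with $u = \Phi(\beta\sqrt{x})$ and $dv = e^{-\gamma^2 x}dx$. The boundary contributes $\Phi(0)/\gamma^2 = 1/(2\gamma^2)$ (the upper boundary vanishes since $\gamma^2 > 0$), and the remaining integrand pairs the Gaussian density $\phi(\beta\sqrt{x})$ with $e^{-\gamma^2 x}$. Since $\gamma^2 + \beta^2/2 = 1-\alpha$, this collapses to another $\Gamma(1/2)$ integral evaluating to $\beta/\bigl(2\gamma^2\sqrt{2(1-\alpha)}\bigr)$.

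Combining all three contributions and using $\gamma^2 + \beta^2/2 = 1-\alpha$ once more produces a single fraction with numerator $\sqrt{1-\alpha} + \beta/\sqrt{2}$ and denominator $\gamma^2$. The difference-of-squares factorization
\ba
1 - \alpha - \tfrac{\beta^2}{2} \;=\; \bigl(\sqrt{1-\alpha} - \tfrac{\beta}{\sqrt{2}}\bigr)\bigl(\sqrt{1-\alpha} + \tfrac{\beta}{\sqrt{2}}\bigr)
\ee
then immediately collapses this ratio to $\bigl(\sqrt{1-\alpha} - \beta/\sqrt{2}\bigr)^{-1}$.

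The only real obstacle is bookkeeping through the integration by parts and spotting the clean difference-of-squares cancellation. The two hypotheses enter exactly once each: $\alpha<1$ ensures the first $\Gamma(1/2)$ integral is finite, and $2\alpha+\beta^2 < 2$ gives $\gamma^2 > 0$, which both guarantees convergence of the second integral and forces the boundary term at infinity to vanish. As a sanity check, at $\alpha=0$ the formula asserts that $\sqrt{\Gamma_{1/2}}\,R$ is exponentially distributed with rate $\sqrt{2}$; this can be independently verified in one line via $\IP(\sqrt{\Gamma_{1/2}}\,R > w) = \IE[e^{-w^2/(2\Gamma_{1/2})}]$ together with the classical identity $\int_0^\infty x^{-1/2}e^{-ax - b/x}dx = \sqrt{\pi/a}\,e^{-2\sqrt{ab}}$.
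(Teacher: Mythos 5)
Your proposal is correct and follows essentially the same route as the paper's proof: condition on $\Gamma_{1/2}$, evaluate the Gaussian inner integral (your expression with $\Phi$ is the paper's expression with $\erf$ in different notation), and then integrate against the exponentially tilted Gamma$(1/2)$ density, handling the $\Phi$/$\erf$ term by the same integration-by-parts reduction to a $\Gamma(1/2)$-type integral. The bookkeeping and the role of the hypotheses $\alpha<1$ and $2\alpha+\beta^2<2$ match the paper's argument, so no further changes are needed.
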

\begin{proof}
We have
\ban
\IE \exp&\left\{\alpha\Gamma_{1/2}+\beta\sqrt{\Gamma_{1/2}}R\right\}=\int_0^\infty\frac{e^{-t}e^{t\alpha}}{\sqrt{\pi t}}
\int_0^\infty r e^{-r^2/2}e^{\beta\sqrt{t}r} dr dt \notag \\
&=\int_0^\infty\frac{e^{-t}e^{t \alpha}}{\sqrt{\pi t}}\left[1+\frac{\beta\sqrt{t\pi}}{\sqrt{2}}e^{\beta^2 t/2}\left(1+\erf(\beta\sqrt{t/2})\right)\right] dt, \label{thrints}
\ee
where 
\ba
\erf(x)=\frac{2}{\sqrt{\pi}}\int_0^xe^{-z^2}dz.
\ee
The expression \eqref{thrints} can be broken into the sum of three integrals of which the first
two can be handled by the elementary evaluation
\ban
\int_0^\infty \frac{e^{-tc}}{\sqrt{\pi t}}=c^{-1/2} \label{gamg}
\ee
for $c>0$.  The final integral can be computed using the fact that for $c+d^2>0$,
\ba
\int_0^\infty e^{-tc}\erf(d\sqrt{t})dt=\frac{d}{c\sqrt{c+d^2}}, 
\ee
which can be shown by applying \eqref{gamg} after an integration by parts, noting that
\ba
\frac{d}{d x}\erf(x)=\frac{2}{\sqrt{\pi}}e^{-x^2}.
\ee
\end{proof}

\section{Sequential description}\label{seq}


 
In this section we will prove a result which contains Theorem \ref{mean}, with notation illustrated by Figure \ref{22}, and then derive some corollaries.
We postpone to Section \ref{cqn} discussion of the relation of these results to the \cmin\ of Brownian motion
(specifically the point process description of Section \ref{pppd}).

\begin{theorem}\label{besbr}
Let $(X(v), 0 \le v \le t)$ be one of the following:
\begin{itemize}
\item A $BES(3)$ bridge from $(0,0)$ to $(t,r)$ for $r>0$.
\item A $BES^0(3)$ process.
\item A Brownian meander of length $t$.
\end{itemize}
Let
$(\unC(v), 0 \le v \le t)$ be the \cmin\ of $X$ and let the vertices of $\unC(v)$ occur at times
$0 = V_0 < V_1 < V_2  < \cdots$ with $\lim_n V_n = t$. Let $\tau_n:= t - V_n$ so $\tau_0 = t > \tau_1 > \tau_2 > \cdots$
with $\lim_n  \tau_n  = 0$. Let $\rho_0 = X(t)$ and for $n\geq1$ let $\rho_0- \rho_n$ denote the intercept at time $t$ of the line extending the segment of the
\cmin\ of $X$ on the interval $(V_{n-1}, V_n)$. The \cmin\ $\unC$ of $X$ is uniquely determined by the sequence
of pairs $(\tau_n, \rho_n)$ for $n = 1, 2, \ldots$ which satisfies the \tr\ recursion with
\eq
\rho_0 = X(t) \mbox{ and } \tau_0 = t.
\en
Moreover, conditionally given $(\unC(v), 0 \le v \le t)$ the process $(X(v) - \unC(v),  0 \le v \le t)$ is a concatenation of
independent Brownian excursions of lengths $\tau_{n-1} - \tau_{n}$ for $n \ge 1$.
\end{theorem}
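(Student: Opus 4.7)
My plan is to reduce the theorem to the case of a BES(3) bridge from $(0,0)$ to $(t, r)$. The BES$^0(3)$ case then follows by integrating over the (random) endpoint $X(t)$, and the Brownian meander case follows from Imhof's absolute continuity (Proposition \ref{imh}), since conditional on $X(t)$ the two processes share the same BES(3) bridge law and the \tr\ recursion's innovations are independent of the initial pair $(\tau_0, \rho_0)$.

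For the bridge case I would argue inductively by peeling off the leftmost face. The crucial claim is that, after this face, the residual process on $[V_1, t]$ shifted by the supporting line $s_0 u$ is itself a BES(3) bridge of length $\tau_1$ with endpoint $\rho_1$, independent (given the first vertex and slope) of the first excursion. Granted this, iteration reduces the transition $(\tau_n, \rho_n) \to (\tau_{n+1}, \rho_{n+1})$ to the first transition of a fresh BES(3) bridge with these new parameters, so it suffices to analyze the first step. Here I would use Proposition \ref{fpbes} to identify $X(u) = r - B(t-u)$ where $B$ is a Brownian motion conditioned on $T_r = t$. Then $V_1$ corresponds to the rightmost vertex $w^\ast = t - V_1$ of the concave majorant of $B$, with slope $s_0 = (r - B(w^\ast))/(t - w^\ast)$ and supporting-line intercept $\rho_1 = r - s_0 t$. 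Decomposing $B$ across $w^\ast$ and subtracting this line, standard Brownian bridge identifications give that on $[w^\ast, t]$ the process ``line minus $B$'' is a Brownian excursion of length $V_1$, while on $[0, w^\ast]$ it is a drifted BM bridge from $(0, \rho_1)$ to $(w^\ast, 0)$ conditioned to stay positive, which by Girsanov is a BES(3) bridge with these endpoints. Time-reversing and translating back to $X$ produces both the excursion on $[0, V_1]$ and the residual BES(3) bridge from $(0, 0)$ to $(\tau_1, \rho_1)$ on $[V_1, t]$ (shifted by $s_0 u$), with the two pieces independent given $(V_1, s_0)$.

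To match the \tr\ recursion, I would then compute the joint distribution of $(\tau_1, \rho_1)$. Using the BES(3) bridge transition densities $p_v(0, x)$ and $p_{t-v}(x, r)$ together with the $h$-transform factors coming from the two ``staying above the line $s u$'' conditions on $(0, v)$ and $(v, t]$, one obtains the joint density of $(V_1, X(V_1))$; changing variables to $(\tau_1, \rho_1)$ is intended to factor as $\rho_1 \sim U(0, r)$ together with, given $\rho_1 = \rho$, the law $\rho^2 V_1/(t \tau_1) \sim \chi_1^2$, which is precisely the statement $\tau_1 = t \rho^2/(t Z^2 + \rho^2)$ for $Z$ standard normal. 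The main obstacle I anticipate is this explicit factoring: both conditional probabilities are first-passage-bridge quantities that combine awkwardly, and the uniformity of $\rho_1$ is a Williams-type identity with no completely elementary derivation. A cleaner route, modeled on the proof of Theorem \ref{bmpp}, is to sweep through a slope parameter $s$ and study the decomposition at $\argmin_u (X(u) - s u)$, tracking the jump of this argmin as $s$ crosses the slope of the first face; this should directly produce the joint law of $(V_1, s_0)$ without separately normalizing the $h$-transforms. Once the first-step distribution is established, the full recursion follows by iteration, and the excursion decomposition of $X - \unC$ into independent Brownian excursions of lengths $\tau_{n-1} - \tau_n$ is built into each step's decomposition.
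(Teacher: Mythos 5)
Your overall architecture matches the paper's: the meander case via Imhof's absolute continuity (Proposition \ref{imh}), the unconditioned $BES^0(3)$ case by conditioning on the endpoint, and an inductive peeling of the face adjacent to $0$, with the residual process (after subtracting the supporting line) a fresh $BES(3)$ bridge of length $\tau_1$ to height $\rho_1$, so that one step of the recursion suffices. The gap is that the one step you need is precisely what you do not prove. Two things are asserted without argument: (i) the path decomposition of the bridge at $(V_1, X(V_1))$ --- this is a decomposition at a vertex of the \cmin, which is not a stopping time, so ``standard Brownian bridge identifications'' and Girsanov do not by themselves deliver that the piece above the first face is an excursion, that the remainder is again a $BES(3)$ bridge, and that the two are conditionally independent; some Williams/Denisov-type splitting theorem must be invoked and justified; and (ii) the joint law of $(\tau_1,\rho_1)$, i.e.\ that $\rho_1$ is uniform on $(0,\rho_0)$ and that given $\rho_1=\rho$ one has $\tau_1 = t\rho^2/(tZ^2+\rho^2)$. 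You explicitly concede that the $h$-transform/first-passage-bridge computation ``combines awkwardly'' and that the uniformity is ``a Williams-type identity with no completely elementary derivation,'' and the proposed fallback --- sweeping a slope $s$ and tracking the jump of $\argmin_u(X(u)-su)$ --- is only a hope: unlike the exponential-horizon Brownian motion of Theorem \ref{bmpp}, a fixed-length bridge gives no independent-increments structure for that argmin process, and computing the jump law this way is essentially the same unfinished calculation.

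The paper closes exactly this gap with one idea you do not use: time inversion of $BES(3)$. Writing $R_3(u) = u\hR(1/u)$ turns the first face of the \cmin\ of $R_3$ on $[0,1]$ into the ultimate minimum of the $BES^r(3)$ process $\hR(1+s)$, $s\ge 0$, with $r=R_3(1)$, and then Williams' decomposition (Proposition \ref{willdecomp}) delivers all three missing facts simultaneously: the minimum is uniform on $[0,r]$, which is the uniform intercept $\rho_1 = (1-U)\rho_0 \ed U\rho_0$; the pre-minimum path is Brownian motion run to a first passage time, which via $T_1 \ed 1/B(1)^2$ gives the stated law of $V_1$ (hence of $\tau_1$); and the post-minimum path is a $BES^0(3)$ process, which after inverting time back becomes the Brownian excursion above the first face, with the residual $BES(3)$ bridge and the conditional independence coming from the same decomposition. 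Without time inversion (or an equivalent substitute that legitimizes splitting at the non-stopping vertex time and produces the uniform law), your proposal remains an outline whose central step is missing.
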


%
%

Before proving the theorem, we note that by essentially rotating and relabeling Figure \ref{22}, we obtain the following
description of the \cmaj\ of a Brownian first passage bridge which is proved by applying Proposition \ref{fpbes} and Theorem~\ref{besbr}. 
\begin{corollary}\label{firstpassage}
Fix $\rho_0 = r > 0$ and let $\rho_1 > \rho_2 >  \ldots > 0$ be the intercepts at $0$ of the linear extensions of segments of the
\cmaj\ of $(B(t), 0 \le t \le \sigma_r)$ where $\sigma_r:= \inf \{t : B(t) = r \}$, and let
$\tau_0  = \sigma_r > \tau_1 > \tau_2 > \cdots$ denote the decreasing sequence of times $t$ such that $(t, B(t))$ is a vertex of the \cmaj\ of $(B(t), 0 \le t \le \sigma_r)$.
Then the sequence of pairs follows the \tr\ recursion with $\rho_0$ as above and $\tau_0 = \sigma_r$.
Moreover, if $(\ovC_r(t), 0 \le t \le \sigma_r)$ denotes the \cmaj, then conditionally given the \cmaj\ the difference process
$(\ovC_r(t) - B(t), 0 \le t \le t \le \sigma_r)$ is a succession
of independent Brownian excursions between the zeros enforced at the times $\tau_n$  of 
vertices of $\ovC$.
\end{corollary}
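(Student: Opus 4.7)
The plan is to reduce Corollary \ref{firstpassage} to Theorem \ref{besbr} via the time-reversal and reflection identity of Proposition \ref{fpbes}. Conditionally on $\sigma_r = t$, define
\[
Y(u) := r - B(\sigma_r - u), \qquad 0 \leq u \leq \sigma_r.
\]
By Proposition \ref{fpbes}, $Y$ is a BES(3) bridge from $(0,0)$ to $(t,r)$, so Theorem \ref{besbr} applies to $Y$ with parameters $\tau_0^Y = t = \sigma_r$ and $\rho_0^Y = Y(\sigma_r) = r$, producing vertex times $0 = V_0 < V_1 < \cdots$ and intercept offsets $\rho_n^Y$ which satisfy the \tr\ recursion.

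Next I would verify that this transformation carries the \cmaj\ of $B$ to the \cmin\ of $Y$. The key observation is that $C(u) := r - \ovC_r(\sigma_r - u)$ is convex and dominated by $Y$, and by applying the inverse transformation to any competitor one sees that $C$ is in fact the maximal such function, i.e., the \cmin\ of $Y$. Consequently the vertex times of $C$ occur at $V_n = \sigma_r - \tau_n$, so the $\tau$-coordinates in Theorem \ref{besbr} applied to $Y$ match the $\tau_n$ of Corollary \ref{firstpassage} on the nose. For the $\rho$-coordinates, a short calculation in coordinates shows that if the line extending the $n$-th segment of $\ovC_r$ has intercept $\rho_n$ at $s = 0$, then under reflection and time-reversal it becomes the line whose value at $u = \sigma_r$ is $r - \rho_n$; by Theorem \ref{besbr} this value equals $\rho_0^Y - \rho_n^Y = r - \rho_n^Y$, hence $\rho_n = \rho_n^Y$ for all $n \geq 0$ and the \tr\ recursion is inherited directly.

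For the excursion decomposition, Theorem \ref{besbr} gives that, conditional on $C$, the process $Y - C$ is a concatenation of independent Brownian excursions of lengths $V_n - V_{n-1} = \tau_{n-1} - \tau_n$. Since the identity $\ovC_r(s) - B(s) = Y(\sigma_r - s) - C(\sigma_r - s)$ expresses $\ovC_r - B$ as the time-reversal of $Y - C$, and since the law of a Brownian excursion on a fixed interval is invariant under time-reversal, the claimed excursion decomposition of $\ovC_r - B$ follows. The only step requiring care is the bookkeeping for the $\rho$-coordinates under simultaneous reflection and time-reversal; once that is tracked cleanly the corollary reduces mechanically to Theorem \ref{besbr}.
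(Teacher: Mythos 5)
Your proposal is correct and follows essentially the same route as the paper, which obtains the corollary precisely by combining Proposition \ref{fpbes} (time reversal and reflection of the first passage path into a BES(3) bridge) with Theorem \ref{besbr}, i.e.\ by ``rotating and relabeling'' Figure \ref{22}. Your bookkeeping of the $\tau$- and $\rho$-coordinates and the time-reversal invariance of the Brownian excursion fills in exactly the details the paper leaves implicit.
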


\begin{proof}[Proof of Theorem \ref{besbr}]
We first prove the theorem for $X$ a $BES(3)$ bridge from $(0,0)$ to $(t,r)$.
Let $(R_3(u), u \ge 0)$ be a $BES^0(3)$ process.
The linear segment
of the \cmin\ of $(R_3(u), 0 \leq u \leq 1)$ connected to zero has slope
$\min_{0 < u \le 1} R_3(u)/u$.  
From the
description of $R_3$ in terms of three independent Brownian motions,
$R_3$ shares the invariance property under time inversion. That is,
\[
R_3(u) = u \hR(1/u)    \mbox{ where }  0 < u \le  1 \le 1/u
\]
for another $BES^0(3)$ process $\hR$. Observe that for each $a \ge 0$
and $0 < u \le 1$ there is the identity of events
$$
(R_3(u ) \ge a u ) = (\hR(1/u) \ge a )
$$
and hence
$$
(R_3(u ) \ge a u \mbox{ for all } 0 \le u \le 1 ) = (\hR(t) \ge a \mbox{ for all } t \ge 1).
$$
The first item of Proposition \ref{willdecomp} states that the minimum value of a $BES^r(3)$ process has uniform distribution on $[0,r]$, so that given $\hR(1) = r$ the facts above can be applied
to the $BES^r(3)$ process $\hR(1 +s), s \ge 0$ to conclude that
\eq
\min_{0 < u \le 1} R_3(u)/u =  U R_3(1) \label{fsl}
\en
where $U$ is independent of $R_3$, and $U$ has uniform 
distribution of $[0,1]$.  Thus, we conclude that the slope of the first segment of the \cmin\ of a $BES^0(3)$ process on $[0,1]$ has distribution given by \re{fsl}.

Now, if $V_1$ denotes the almost surely unique time $u$ at which
$R_3(u)/u$ attains its minimum on $(0,1]$, then the first
vertex after time $0$ of the convex minorant of $(R_3(u), 0 \le u \le 1)$
is $(V_1, V_1 U R_3(1))$ for $U$ and $R_3(1)$ as above.  We can derive the distribution of $V_1$ by using the Williams decomposition of Proposition
\ref{willdecomp} and Brownian scaling.  More precisely, the second item of Proposition \ref{willdecomp} implies that the distribution of $V_1$ conditioned on $R_3(1)$ and $U$ is
$1/(1 + R_3(1) ^2 (1-U)^2 T_1 )$, where $T_1$ is the hitting time of $1$ by a standard Brownian motion $B$, assumed independent of $R_3(1)$ and $U$.  From this point, we have that
$$
V_1  \ed \frac{ 1 } { 1 + R_3(1) ^2 (1-U)^2 / B(1)^2 } ,
$$
where we have used the basic fact that $T_1\ed B(1)^{-2}$.

The previous discussion implies the the assertions of the theorem about the first face of the \cmin, so we now focus
on determining the law of the process above this face. 
Given $U R_3(1)=a$ and $V_1=v$, the path $(X_1(u), 0 \leq u\leq v) = (R_3(u) - u a, 0 \le u \le v)$ satisfies
$X_1(u)=u(\hR(1/u)-a)$ for $0<u\leq v $, and the latter process is the time inversion of the $BES^0(3)$ process appearing in the third item of the
Williams decomposition of Proposition
\ref{willdecomp}.  Under this conditioning,
$(X_1(u), 0 \leq u\leq v)$ is a $BES^0(3)$ process conditioned to be zero at time $v$, which implies $X_1$
is a Brownian excursion of length $v$.
Similarly, given $R_3(1) =r$, $V_1 = v$, $R_3(V_1) = a v$
the process $(R_3(v + w) - ( R_3(v) + a w ) , 0 \le w \le 1 - v)$  is a $BES(3)$ bridge from $(0,0)$ to $(1-v, r- a)$, and
after a simple rescaling, this decomposition can be applied again to the
remaining $BES(3)$ bridge from $(0,0)$ to $(1-v, r- a)$, to recover the second segment of the \cmin\ of  
$(R_3(u) , 0 \le u \le 1)$, and so on.  
With Brownian scaling, this proves the result for a $BES(3)$ bridge.

Finally, the result follows immediately for the unconditioned $BES^0(3)$ process,
and for the Brownian meander of length $t$, we appeal to the result of
Imhof \cite{imhof84} given previously as Proposition \ref{imh} that the law of the Brownian meander of length $t$ is absolutely continuous with respect to that of the
unconditioned $BES^0(3)$ process on $[0,t]$ with density depending only on the final value.
\end{proof}

\section{Consequences}\label{cqn}

We now return to the discussion related to Theorem \ref{equiv} surrounding the relationship between our two descriptions.
First notice that the Poisson point process description for Brownian motion on the interval $[0, \Gamma_1]$ yields an analogous
description for a meander of $\Gamma_{1/2}$ length by restricting the process to positive slopes.  This observation yields the following Corollary of Theorem~\ref{bmpp}.  Note that we have introduced a factor of two in the length of the meander to simplify the formulas found below.
\begin{corollary}\label{mecor}
Let $(M(t),0\leq t \leq 2\Gamma_{1/2})$ be a Brownian meander of length $2\Gamma_{1/2}$.
Then the lengths $x$ and slopes $s$ of the faces of the \cmin\ of $M$ form a Poisson point process on $\IR^+ \times \IR^+$
with intensity measure
\begin{align}
\frac{\exp\{-\frac{x}{2}\left(1+s^{2}\right)\}}  {\sqrt{2 \pi x}}  \, ds\,dx, \hspace{5mm} x,s \geq0. \label{imbme}
\end{align}
\end{corollary}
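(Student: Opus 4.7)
The plan is to derive the stated PPP by combining Denisov's decomposition (Proposition \ref{denvar}) with the restriction property of Poisson point processes and Brownian scaling.

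First I would apply Proposition \ref{denvar} to split the Brownian motion on $[0,\Gamma_1]$ at its almost surely unique minimum time $T$. This writes the path as a concatenation of two independent meanders: the time-reversed pre-minimum meander of length $T$ and the post-minimum meander of length $\Gamma_1 - T$. Under this decomposition, the \cmin\ of the full Brownian path splits at the vertex corresponding to the overall minimum: the faces with negative slope correspond (after reversal and slope-negation) to the faces of the \cmin\ of the reversed pre-minimum meander, while the faces with positive slope coincide exactly with the faces of the \cmin\ of the post-minimum meander $(B(T+u)-M,\,0\le u\le \Gamma_1-T)$.

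Next, I would restrict the Poisson point process of Theorem \ref{bmpp} to the half-plane $s>0$. Since restricting a PPP to a measurable subset of its state space yields a PPP with restricted intensity, this produces a PPP on $\IR^+\times\IR^+$ with intensity
\[
\frac{\exp\{-\tfrac{x}{2}(2+s^{2})\}}{\sqrt{2\pi x}}\,ds\,dx,
\]
which by the preceding paragraph describes the lengths and slopes of faces of the \cmin\ of the post-minimum meander. By Proposition \ref{denvar}, $T \ed \Gamma_{1/2}$, and by the symmetry of the arcsine decomposition $\Gamma_1 - T \ed \Gamma_{1/2}$ as well, so this gives the PPP description for a meander of length $\Gamma_{1/2}$.

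Finally, I would pass from length $\Gamma_{1/2}$ to length $2\Gamma_{1/2}$ via Brownian scaling: replacing a meander $M$ of length $L$ by the meander $\widetilde{M}(v)=\sqrt{2}\,M(v/2)$ of length $2L$ transforms each face $(x,s)$ to $(2x,s/\sqrt{2})$. Computing the pushforward of the intensity just obtained under the map $(x,s)\mapsto(2x,s/\sqrt{2})$ (Jacobian $\sqrt{2}$) yields exactly \eqref{imbme}. The only nontrivial point, and the main thing to verify cleanly, is that the bijection between positive-slope faces of the full \cmin\ and faces of the post-minimum meander's \cmin\ is a true equality rather than merely a distributional identity; this follows because the global minimum is a vertex of the \cmin, so the restriction of the \cmin\ to $[T,\Gamma_1]$ (shifted by $-M$) is precisely the \cmin\ of the post-minimum meander.
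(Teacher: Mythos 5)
Your proposal is correct and takes essentially the same route as the paper: Denisov's decomposition identifies the positive-slope faces of the \cmin\ of the Brownian motion with the faces of the \cmin\ of the post-minimum meander, and the result then follows from Theorem \ref{bmpp} by restriction of the Poisson point process together with Brownian scaling. The only cosmetic difference is the order of operations: the paper scales first, splitting a Brownian motion on $[0,2\Gamma_1]$ so that the post-minimum meander already has length $2\Gamma_{1/2}$, whereas you restrict first and then rescale the length-$\Gamma_{1/2}$ meander, with the pushforward computation giving the same intensity \eqref{imbme}.
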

\begin{proof}
Denisov's decomposition implies that $M$ can be constructed as the fragment of a Brownian motion $B$ on $[0, 2\Gamma_1]$, occurring after the time of the minimum.  Since the minimum of a Brownian motion on $[0,1]$
occurs at an arcsine distributed time and the faces of the \cmin\ of $B$ after
the minimum are simply the faces with positive slope, the corollary follows from Theorem \ref{bmpp} and Brownian scaling. 
\end{proof}
\begin{remark}
By scaling out the meander by a factor of two, the density \eqref{imbme} differs only slightly from \eqref{imbm}.  In general, the Poisson point
process of lengths
$x$ and slopes $s$ of the \cmin\ of
a Brownian motion on $[0, \theta\Gamma_1]$ has density
\ba
\frac{\exp\{-\frac{x}{2}\left(\frac{2}{\theta}+s^{2}\right)\}}  {\sqrt{2 \pi x}} \, ds\,dx, \hspace{5mm} x\geq0, s\in \IR,
\ee
which follows from Brownian scaling. 
\end{remark}

Alternatively, the construction of Theorem \ref{mean} implies that we can in principle obtain the lengths and slopes of the \cmin\ of $M$ through
the variables $\{(\tau_i, \rho_i), i=0, 1, \ldots\}$ as illustrated by Figure \ref{22}.  Precisely, we have the following result
which follows directly from Theorem \ref{besbr} and the definition of a meander of a random length given in Section \ref{background}.
\begin{corollary}\label{xxi}
Using the notation from Figure \ref{22}, let $(2\Gamma_{1/2}-\tau_i,\rho_i)$ be the times of the vertices and the intercepts of the \cmin\ of 
$(M(t),0 \leq t \leq 2\Gamma_{1/2})$, a Brownian meander of length $2\Gamma_{1/2}$.
Then the sequence $(\tau_i,\rho_i)$ follows the \tr\ recursion with
$\tau_0\ed2\Gamma_{1/2}$ and $\rho_0\ed\sqrt{\tau_0}R$, where $R$ has the Rayleigh distribution.  
\end{corollary}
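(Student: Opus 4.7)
The plan is to apply Theorem \ref{besbr} conditionally on the random length $T := 2\Gamma_{1/2}$ and then identify the unconditional joint distribution of the initial pair $(\tau_0, \rho_0)$. By the definition of a Brownian meander of random length given in Section \ref{background}, conditional on $T = t$ the process $(M(v), 0 \le v \le t)$ is a Brownian meander of deterministic length $t$. Theorem \ref{besbr}, applied in its meander case, then states that the sequence $(\tau_n, \rho_n)_{n \ge 0}$ read off from $M$ satisfies the $(\tau,\rho)$ recursion with $\tau_0 = t$ and $\rho_0 = M(t)$. Because the auxiliary i.i.d.\ sequences $(U_n)$ and $(Z_n^2)$ driving the recursion are by definition independent of $(\tau_0, \rho_0)$, they can be chosen independently of $T$ as well, so the recursion persists after integrating out $T$; what remains is purely to compute the law of $(T, M(T))$.

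By the scaling definition of the meander of random length, $M(T) = \sqrt{T}\,\me(1)$ with $\me$ a standard meander that is independent of $T$, so $\tau_0 = T \ed 2\Gamma_{1/2}$ and $\rho_0 \ed \sqrt{\tau_0}\,\me(1)$ with $\me(1)$ independent of $\tau_0$. The one remaining input is the fact that $\me(1)$ has the Rayleigh distribution; I would deduce this directly from Imhof's relation (Proposition \ref{imh}), which says that the density of $\me(1)$ is $(\pi/2)^{1/2} r^{-1}$ times the density of $R_3(1)$ for $R_3$ a $BES^0(3)$ process. Since $R_3(1)$ has density proportional to $r^2 e^{-r^2/2}$ on $r>0$, this multiplication produces the density $r e^{-r^2/2}$, which is exactly the Rayleigh density.

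The only mildly delicate point is ensuring that Theorem \ref{besbr}, which is stated for a meander of a deterministic length $t$, transfers unchanged to the random-length setting. This is a routine disintegration argument once one arranges the driving sequences of the recursion to be independent of $T$, but it is the only step that is not an immediate quotation, so it is where I would be most careful in the write-up.
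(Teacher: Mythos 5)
Your proposal is correct and matches the paper's route: the paper derives Corollary \ref{xxi} exactly by quoting Theorem \ref{besbr} for a meander of fixed length together with the scaling definition of a meander of random length, with the Rayleigh law of the meander's endpoint (equivalently $\sqrt{2\Gamma_1}$) supplying the law of $\rho_0$. Your disintegration step and the identification of the Rayleigh density via Imhof's relation (Proposition \ref{imh}) are fine fillings-in of details the paper leaves implicit (the endpoint law can equally be read off Denisov's decomposition, Proposition \ref{denvar}).
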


The descriptions of Corollaries \ref{mecor} and \ref{xxi} are defining in the sense that either one in principle is derivable from the other.  However, it is not obvious how to implement this program, and moreover, even some simple equivalences elude
independent proofs.  In the remainder of this section we will explore these equivalences.  

\begin{proposition}\label{uui}
\mbox{}
\begin{enumerate}
\item
Let $(V_1,S_1)$ denote the length and slope of the segment with the minimum slope of the \cmin\ of $M$ as defined in Corollary \ref{mecor}. Then
\ban
\IP(V_1\in dv, S_1 \in da)=v^{-{1/2}}e^{-v/2}\phi(a\sqrt{v})(\sqrt{1+a^2}-a) dv da, \label{laden}
\ee
where $\phi(x)=(2\pi)^{-1/2}e^{-x^2/2}$ is the standard normal density.
\item If $W$ is a standard normal random variable independent of $S_1$, then
\ban\label{fact}
\left(V_1(1+S_1^2),S_1\right) \ed \left(W^2, S_1\right).
\ee
\end{enumerate}
\end{proposition}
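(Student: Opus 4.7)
The plan is to exploit the Poisson point process description of Corollary~\ref{mecor} throughout. Writing
\ba
\lambda(x,s) := \frac{\exp\{-x(1+s^2)/2\}}{\sqrt{2\pi x}}
\ee
for the intensity of the lengths/slopes point process on $\IR^+\times\IR^+$, the pair $(V_1, S_1)$ is simply the point of this process with the smallest $s$-coordinate.

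For part 1, I would apply the standard formula for the smallest-slope point of a Poisson point process:
\ba
f_{(V_1,S_1)}(v,a) = \lambda(v,a)\,\exp\left\{-\int_0^a \int_0^\infty \lambda(x,s)\,dx\,ds\right\}.
\ee
The inner integral is a gamma integral evaluating to $(1+s^2)^{-1/2}$, so the outer integral is $\arcsinh(a) = \log(a+\sqrt{1+a^2})$, making the exponential factor equal to $1/(a+\sqrt{1+a^2}) = \sqrt{1+a^2}-a$. Rewriting $\lambda(v,a) = v^{-1/2} e^{-v/2}\phi(a\sqrt{v})$ using the standard normal density $\phi$ then gives exactly the density stated in \eqref{laden}.

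For part 2, it suffices to show that the conditional density of $T := V_1(1+S_1^2)$ given $S_1 = a$ is the density of $W^2$, namely $(2\pi t)^{-1/2} e^{-t/2}$, and in particular does not depend on $a$. The marginal density of $S_1$, obtained from the same minimum-of-Poisson formula applied to the $s$-marginal intensity $(1+s^2)^{-1/2}$ computed above, is $(\sqrt{1+a^2}-a)/\sqrt{1+a^2}$. Dividing the joint density of $(V_1,S_1)$ by this yields
\ba
f_{V_1\mid S_1}(v\mid a) = \frac{\sqrt{1+a^2}}{\sqrt{2\pi v}}\exp\{-v(1+a^2)/2\}.
\ee
Changing variables to $t = v(1+a^2)$, with Jacobian $1/(1+a^2)$, collapses the $a$-dependence and produces exactly the density of $W^2$.

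The calculation is essentially direct once the Poisson minimum formulas are in place; there is no real conceptual obstacle, only careful bookkeeping. The one point worth emphasizing is the algebraic cancellation in part 2: the factor $\sqrt{1+a^2}-a$ arising from the conditioning on $\{S_1=a\}$ exactly cancels against the corresponding factor in the normalizing constant of $S_1$, so that after the natural rescaling by $1+a^2$ the conditional law of $V_1$ becomes $W^2$, independent of $a$.
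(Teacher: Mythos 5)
Your proposal is correct and follows essentially the same route as the paper: the joint density of $(V_1,S_1)$ is the intensity at $(v,a)$ times the void probability $P_0(a)=\exp\{-\int_0^a(1+s^2)^{-1/2}ds\}=\sqrt{1+a^2}-a$, and part 2 comes from the substitution $t=v(1+a^2)$ in \eqref{laden}. Your conditional-density phrasing of part 2 is just a slightly more verbose version of the paper's direct change of variables, which immediately factors the joint density of $(V_1(1+S_1^2),S_1)$.
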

\begin{proof}
From the Poisson description of Corollary \ref{mecor}, 
\ba
\IP(V_1\in dv, S_1 \in da) = v^{-{1/2}}e^{-v/2}\phi(a\sqrt{v}) \times P_0(a),
\ee
where $P_0(a)$ is the chance of having no points of the Poisson process with slope less than $a$.
Now,
\begin{align}
P_0(a)=\IP(S_1>a) &= \exp\left\{-\int_0^a \int_0^\infty v^{-{1/2}}e^{-v/2}\phi(s\sqrt{v}) dv ds\right\}\notag\\
&= \exp\left\{-\int_0^a(1+s^2)^{-1/2}da \right\} \notag \\
&=\sqrt{1+a^2}-a, \label{sdist}
\end{align}
which implies the first item of the proposition.

The second item follows after making the substitution $t=v(1+a^2)$ in \eqref{laden}.
\end{proof}

Comparing Proposition \ref{uui} with the analogous conclusions of Corollary \ref{xxi} yields the following remarkable identity.
\begin{theorem}\label{remid}
Let $R$ Rayleigh distributed, $U$ uniform on $[0,1]$,  $Z$ and $W$ standard normal, and $T\ed2\Gamma_{1/2}$ be independent random variables.  If $\barU:=1-U$, then
\eq
\lb{ident3}
\left( \frac{ T + \barU ^2 R^2 }{ 1 + U^2 R^2/Z^2 }, \frac{ \barU  R } {\sqrt{T}} \right)
\ed \left( W^2 , S_1 \right),
\en
where on the right side the two components are independent (hence also on the left). 
\end{theorem}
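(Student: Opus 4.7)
The strategy is to recognize both sides of \eqref{ident3} as two different ways of writing the same pair $(V_1(1+S_1^2),S_1)$, where $(V_1,S_1)$ is the length and slope of the first face of the \cmin\ of a Brownian meander of length $2\Gamma_{1/2}$. The right-hand side comes from the Poisson description via Proposition \ref{uui}(2); the left-hand side should come from the $(\tau,\rho)$ recursion of Corollary \ref{xxi}.

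First I would instantiate the $(\tau,\rho)$ recursion using the given independent variables: set $\tau_0=T$, $\rho_0=\sqrt{T}R$, and use $U$, $Z$ as the first-step randomizers $U_0$, $Z_1$. Corollary \ref{xxi} guarantees that the resulting $(\tau_1,\rho_1)$ matches, in distribution, the second vertex of the \cmin\ of a meander of length $T$. A single step of the recursion yields
\ba
\tau_1=\frac{U^2TR^2}{Z^2+U^2R^2},\qquad \rho_1=U\sqrt{T}\,R,
\ee
so $V_1=\tau_0-\tau_1=T/(1+U^2R^2/Z^2)$ and, since the first face of the \cmin\ passes through the origin and has intercept $\rho_0-\rho_1$ at time $\tau_0$, its slope is $S_1=(\rho_0-\rho_1)/\tau_0=\barU R/\sqrt{T}$. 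A short algebraic manipulation then identifies
\ba
V_1(1+S_1^2)=\frac{T+\barU^{2}R^{2}}{1+U^{2}R^{2}/Z^{2}},
\ee
which is precisely the first coordinate of the LHS of \eqref{ident3}, and the second coordinate $\barU R/\sqrt{T}$ is already $S_1$.

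To finish I would invoke Proposition \ref{uui}(2), which was proved through the Poisson point process description of Corollary \ref{mecor}, and which asserts $(V_1(1+S_1^2),S_1)\ed(W^2,S_1)$ with $W$ standard normal independent of $S_1$. Combined with the recursion-side representation above, this gives \eqref{ident3}.

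The main \emph{obstacle} here is not computational but conceptual: there appears to be no elementary route from the LHS to the RHS using direct manipulations of Rayleigh, uniform, and normal variates. The force of the identity lies entirely in the fact that the \cmin\ of the meander admits two a priori very different stochastic descriptions — a Poisson point process on lengths and slopes, and a Markovian recursion on vertex data — which must induce the same joint law on $(V_1,S_1)$. Once Corollary \ref{xxi} and Proposition \ref{uui}(2) are available, the verification is pure bookkeeping; the content is in the equivalence of the two descriptions, of which \eqref{ident3} is a witness.
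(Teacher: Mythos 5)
Your proposal is correct and follows essentially the same route as the paper: use Corollary \ref{xxi} to write $(V_1,S_1)=(\tau_0-\tau_1,(\rho_0-\rho_1)/\tau_0)$ in terms of $T,R,U,Z$ (noting the minimum-slope face is the first face), then combine with the identity $(V_1(1+S_1^2),S_1)\ed(W^2,S_1)$ of Proposition \ref{uui}(2) from the Poisson description. The algebraic bookkeeping matches the paper's \eqref{meed} exactly, so nothing is missing.
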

\begin{proof}
Because the face of the \cmin\ with minimum slope is also the first face, we know that 
\ban
(V_1,S_1)\ed(\tau_0-\tau_1, (\rho_0-\rho_1)/\tau_0), \label{rhid}
\ee
where the sequence $(\tau_i, \rho_i)_{i\geq0}$ is defined as in Corollary \ref{xxi}.  Corollary \ref{xxi} also implies that we have the representation
\ba
(\tau_0, \rho_0) = (T, \sqrt{T} R)
\ee
and
\ba
(\tau_1, \rho_1)=\left(\frac{U^2R^2T}{Z^2+U^2R^2}, U\sqrt{T}R\right),
\ee
so that using \eqref{rhid} we find
\ban\label{meed}
(V_1,S_1)\ed\left(\frac{TZ^2}{Z^2+U^2R^2}, \frac{R\barU}{\sqrt{T}}\right).
\ee
Combining \eqref{fact} and \eqref{meed} yields the theorem.
\end{proof}

\begin{remark}
The straightforward calculation 
\begin{align}
\IP\left(\frac{\barU R}{\sqrt{T}}>s\right)&=\sqrt{\frac{2}{\pi}}\int_0^\infty\int_{st}^\infty\int_{st/r}^1re^{-r^2/2}e^{-t^2/2}dudrdt \notag \\
			&=\sqrt{1+s^2}-s, \label{rtudist}
\end{align}
shows that the distribution of the second component on the left hand side of \eqref{ident3} agrees with that on that right given by \eqref{sdist},
but the equality in distribution
of first components given by Corollary \ref{bewid} of the introduction
is not as obvious.
\end{remark} 

\begin{proposition}\label{con1}
\mbox{}
\begin{enumerate}
\item If $(L_i,S_i)$ is the length and slope of the $i$th face of the \cmin\ of $M$ (with $T\ed 2\Gamma_{1/2}$ as above), then
\begin{align}
\IP&(L_i\in dx, S_i \in da) \notag \\
&=x^{-{1/2}}e^{-x/2}\phi(a\sqrt{x})(\sqrt{1+a^2}-a)\frac{\left(-\log(\sqrt{1+a^2}-a)\right)^{i-1}}{(i-1)!} dx da. \label{laden1}
\end{align}
\item If $W$ is a standard normal random variable independent $S_i$, then
\begin{align}
(L_i(1+S_i^2),S_i)\ed (W^2, S_i), \label{itind}
\end{align}
\end{enumerate}
\end{proposition}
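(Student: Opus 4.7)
The plan is to use the Poisson point process description of the \cmin\ of $M$ from Corollary \ref{mecor} and combine it with the standard formula for the density of the $i$-th order statistic of a Poisson process, so that Proposition \ref{uui} is simply the $i=1$ case of what we are trying to prove. First I would compute the cumulative slope intensity
\[
\Lambda(a) := \int_0^a \int_0^\infty \frac{e^{-x(1+s^2)/2}}{\sqrt{2\pi x}}\, dx\, ds = \int_0^a \frac{ds}{\sqrt{1+s^2}} = -\log\!\bigl(\sqrt{1+a^2}-a\bigr),
\]
where the last equality uses the identity $(a + \sqrt{1+a^2})(\sqrt{1+a^2}-a) = 1$. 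This computation has essentially already appeared in \eqref{sdist}.

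Next, I would invoke the standard description of order statistics for a Poisson point process on $\IR^+\times\IR^+$ with intensity $h(x,a)\,dx\,da$: if points are enumerated by increasing slope, the joint density of $(L_i, S_i)$ equals
\[
h(x,a) \cdot \frac{\Lambda(a)^{i-1} e^{-\Lambda(a)}}{(i-1)!},
\]
where the second factor is the probability of having exactly $i-1$ slopes below $a$ and the first factor places the $i$-th point at $(x,a)$. With $h(x,a) = x^{-1/2} e^{-x/2} \phi(a\sqrt{x})$ and $e^{-\Lambda(a)} = \sqrt{1+a^2}-a$, this directly yields formula \eqref{laden1}, establishing item 1.

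For item 2, I would use the same change of variable $t = x(1+a^2)$ (with $a$ fixed) that was used in the proof of Proposition \ref{uui}. Rewriting $h(x,a)\,dx = (2\pi x)^{-1/2} e^{-x(1+a^2)/2} \, dx$ and substituting, the density of $(L_i(1+S_i^2), S_i)$ at $(t,a)$ factors as
\[
\frac{e^{-t/2}}{\sqrt{2\pi t}} \;\times\; \frac{1}{\sqrt{1+a^2}} \cdot \frac{\Lambda(a)^{i-1} e^{-\Lambda(a)}}{(i-1)!},
\]
where the first factor is the density of $W^2$ for $W$ standard normal, and the second is the marginal density of $S_i$ (which can alternatively be verified by integrating \eqref{laden1} over $x$). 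This gives both the independence and the identification of the first marginal as $W^2$.

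I do not anticipate a substantive obstacle: the main content was already packaged into Corollary \ref{mecor} and the cumulative slope computation of \eqref{sdist}, and the remaining work is a one-line application of the Poisson order statistic formula together with the same Jacobian manipulation that proved the $i=1$ case. If there is any delicate point, it is verifying that the order-statistic formula applies here; but this is immediate once one notes that the slopes alone form a one-dimensional Poisson process with mean measure $\Lambda$, and conditionally on the slopes the lengths are independent with the corresponding conditional intensity.
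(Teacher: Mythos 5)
Your proposal is correct and follows essentially the same route as the paper: the joint density is the Poisson intensity at $(x,a)$ multiplied by the probability $P_{i-1}(a)$ of exactly $i-1$ points of slope less than $a$, with the number of such points Poisson of mean $\Lambda(a)=-\log(\sqrt{1+a^2}-a)$, and item 2 follows from the same substitution $t=x(1+a^2)$ used in the $i=1$ case. Your explicit factorization into the density of $W^2$ times the marginal of $S_i$ matches the remark following the proposition in the paper.
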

\begin{proof}
Similar to the proof of Proposition \ref{uui},
\ba
\IP(L_i\in dx, S_i \in da) = x^{-{1/2}}e^{-x/2}\phi(a\sqrt{x}) \times P_{i-1}(a),
\ee
where $P_{i-1}(a)$ is the chance of having $i-1$ points of the Poisson process with slope less than $a$.
Since the number of points with slope less than $a$ is a Poisson random variable with mean
$-\log(P_0(a))$, the first item follows.

The second item is immediate after making the substitution $t=x(1+a^2)$ in \eqref{laden1}. 
\end{proof}
\begin{remark}
Integrating out the variable $x$ in \eqref{laden1} implies 
\ba
\IP(S_i\in da)=\left(1-\frac{a}{\sqrt{1+a^2}}\right)\frac{\left(-\log(\sqrt{1+a^2}-a)\right)^{i-1}}{(i-1)!} da,
\ee
while the marginal density for $L_i$ does not appear to simplify beyond the expression obtained by
integrating out $a$ in \eqref{laden1}.
\end{remark}
Alternatively, we can use the sequential description to obtain the following in the case where $i=2$ (noting that $L_i=\tau_{i-1}-\tau_i$).
\begin{proposition}\label{con2}
For $i=1,2$ let $Z_i$ be independent standard normal random variables and $U_i$ independent uniform $(0,1)$ random variables.
Then
\ban
S_2\ed\frac{(1-U_1U_2)R\sqrt{T}-V_1S_1}{T-V_1}=\frac{R}{\sqrt{T}}\left(1-U_1U_2+\frac{Z_1^2(1-U_2)}{U_1R^2}\right), \label{eq1}
\ee
and
\ban
L_2\ed\frac{TU_1^2R^2Z_2^2}{(Z_1^2+U_1^2R^2)(Z_2^2+(Z_1^2+U_1^2R^2)U_2^2)}. \label{eq2}
\ee
Moreover, the equivalences given by \eqref{eq1} and \eqref{eq2} hold jointly.  
\end{proposition}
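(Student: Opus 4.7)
My plan is to derive both identities by direct substitution from the \tr\ recursion of Corollary \ref{xxi}, which gives the law of $(\tau_n, \rho_n)_{n\geq 0}$ with $\tau_0 = T \ed 2\Gamma_{1/2}$, $\rho_0 = \sqrt{T}R$, and driving variables $U_1, U_2, \ldots$ (i.i.d.\ uniform on $(0,1)$) and $Z_1^2, Z_2^2, \ldots$ (i.i.d.\ squared standard normals), jointly independent. Since $V_1 = T - \tau_1$, $L_2 = \tau_1 - \tau_2$, and $S_1 = (\rho_0 - \rho_1)/\tau_0$, it suffices to express $\tau_1, \tau_2, \rho_1, \rho_2, S_2$ as explicit functions of the single tuple $(T, R, U_1, U_2, Z_1, Z_2)$; the joint equality in distribution will then be automatic.

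First I would unroll two steps of the recursion to obtain $\rho_1 = U_1\sqrt{T}R$, $\tau_1 = TU_1^2 R^2/(Z_1^2 + U_1^2 R^2)$, and, after substituting this expression for $\tau_1$ into $\tau_2 = \tau_1 \rho_2^2/(\tau_1 Z_2^2 + \rho_2^2)$ with $\rho_2 = U_1 U_2\sqrt{T}R$, the simplified form $\tau_2 = TU_1^2 U_2^2 R^2/[Z_2^2 + (Z_1^2 + U_1^2 R^2)U_2^2]$. Then \eqref{eq2} follows from $L_2 = \tau_1 - \tau_2$ after a common-denominator simplification in which the numerator telescopes to $TU_1^2 R^2 Z_2^2$.

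For $S_2$ I would use the geometric observation that the lines extending the $(n{-}1)$-st and $n$-th faces of $\unC$ both pass through the common vertex $(V_{n-1}, X(V_{n-1}))$ and have respective values $\rho_0 - \rho_{n-1}$ and $\rho_0 - \rho_n$ at time $t$; matching values at $V_{n-1} = t - \tau_{n-1}$ yields the slope recursion $S_n = S_{n-1} + (\rho_{n-1} - \rho_n)/\tau_{n-1}$. Applying this with $n = 2$, using $S_1 = (1 - U_1)R/\sqrt{T}$ and the formulas above, produces the second expression in \eqref{eq1}. The first expression in \eqref{eq1} then follows from the fact that face~$1$ of the meander passes through $(0,0)$, so $X(V_1) = V_1 S_1$, combined with the two-point slope formula for face~$2$ applied to its left endpoint $(V_1, V_1 S_1)$ and its value $\rho_0 - \rho_2 = (1 - U_1 U_2)R\sqrt{T}$ at time $t = T$.

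The only real obstacle is careful algebraic bookkeeping; all of the probabilistic content has already been supplied by Corollary \ref{xxi} and by the geometric interpretation of the $\rho_n$'s established in Theorem \ref{besbr}.
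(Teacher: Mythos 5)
Your proposal is correct and follows exactly the route the paper intends (the paper states Proposition \ref{con2} without a written proof, prefacing it only with the remark that it follows from the sequential description): unroll two steps of the \tr\ recursion of Corollary \ref{xxi} to get $\tau_1,\tau_2,\rho_1,\rho_2$ as functions of $(T,R,U_1,U_2,Z_1,Z_2)$, use $L_2=\tau_1-\tau_2$ and the slope/intercept geometry of Theorem \ref{besbr} for $S_2$, and note that joint equality in distribution is automatic since everything is a deterministic function of the same driving tuple. Your algebra checks out, so nothing further is needed.
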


Combining Propositions \ref{con1} and \ref{con2} would yield a result similar to, but more complicated than Theorem \ref{remid}.  Moreover,
it is not difficult to obtain more identities by considering greater indices.
These identities seem to defy independent proofs.  We leave it as an open problem to construct a framework to explain
these equivalence without reference to Brownian motion.

\section{Density Derivations}\label{ppc}

In this section we use Corollary \ref{mecor} to derive various densities and transforms associated to the process of vertices and slopes of faces of the \cmin\ of Brownian motion and meander.  First we define the inverse hyperbolic functions
\ba
\arcsinh(x)&:=\log \left( x+\sqrt{1+x^2} \right), \hspace{5mm} x\in\IR,\\
\arcosh(x)&:=\log \left( x+\sqrt{x^2-1} \right), \hspace{5mm} x \geq1
\ee
and to ease notation, let 
\ba
\ar(t):=\arcosh(t^{-1/2}), \hspace{5mm} 0<t\leq1.
\ee
\begin{theorem}\label{denstease}
Using the notation of Theorem \ref{mean} and Figure \ref{22} with $t=1$, for $n=1, 2, \ldots$ let $1-\tau_n$ be the time of the right endpoint of the $n$th face of the \cmin\ of a standard Brownian meander, and let $f_{\tau_n}$ denote the density of
$\tau_n$.  For $0< t < 1$, and $|z|<1$, we have
\ban
\sum_{n=1}^\infty f_{\tau_n}(t)z^n=\left(\frac{1}{2(1-t)^{3/2}}\right)\frac{z\left[-1+\left(\frac{1-z\sqrt{1-t}}{\sqrt{t}}\right)\left(\frac{\sqrt{1-t}+1}{\sqrt{t}}\right)^z\right]}{(1-z^2)}.\label{genft}
\ee
In the case $z=1$, we obtain
\begin{align}
\sum_{n=1}^\infty f_{\tau_n}(t)=\frac{1-t +\sqrt{1-t}-t\,\ar(t)}{4t(1-t)^{3/2}}, \label{intsmes}
\end{align}
which is the intensity function of the (not Poisson) point process with points $\{\tau_n: n\in\IN\}$.

\end{theorem}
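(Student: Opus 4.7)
The plan is to prove \eqref{genft} via the Poisson point process description of Corollary \ref{mecor}. The key simplifying change of variable is $\phi = \arcsinh(s)$: under it, the slopes of faces of the \cmin\ of the length-$2\Gamma_{1/2}$ meander become a unit-rate Poisson process on $\mathbb{R}_+$, so $\phi_n := \arcsinh(S_n)$ has density $\phi^{n-1}e^{-\phi}/(n-1)!$. Writing $\tau_n = \tau_n^T/T$ with $T = \sum_i L_i = 2\Gamma_{1/2}$, $\tau_n^T = \sum_{i>n} L_i$, and $V_n^T = T-\tau_n^T$, the Poisson splitting property makes $\tau_n^T$ and $V_n^T$ conditionally independent given $\phi_n$.

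The conditional Laplace transforms, computed from the intensity \eqref{imbme}, evaluate to
\ba
E[e^{-\mu \tau_n^T}\mid \phi_n = \phi] &= \frac{e^{\phi}}{\sinh\phi + \sqrt{\cosh^2\phi + 2\mu}}, \\
E[e^{-\mu V_n^T}\mid \phi_n = \phi] &= \frac{\cosh\phi}{\sqrt{\cosh^2\phi + 2\mu}}\left(\frac{\arcsinh(\sinh\phi/\sqrt{1+2\mu})}{\phi}\right)^{n-1}.
\ee
Apply the standard density formula $f_{X/(X+Y)}(t) = \int_0^\infty v f_X(tv) f_Y((1-t)v)\,dv$ for independent positive $X, Y$, multiply by the Gamma density of $\phi_n$, and sum in $n$ weighted by $z^n$. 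The crucial cancellation is that the $\phi^{n-1}/(n-1)!$ factor from the Gamma density cancels the $\phi^{-(n-1)}$ factor in the $V_n^T$ transform, so the sum collapses to $z\exp(z\arcsinh(\sinh\phi/\sqrt{1+2\mu}))$, eliminating the $n$-dependence entirely. Inverting the Laplace transforms and evaluating the resulting double integral over $(\phi,v)$, then substituting $\theta = \arcosh(1/\sqrt{t})$, reorganizes everything into the clean intermediate form
\[
\sum_{n=1}^\infty f_{\tau_n}(t) z^n = \frac{z}{2(1-t)^{3/2}} \int_0^\theta e^{zu}\sinh u\,du,
\]
which is equivalent to \eqref{genft}: expanding $\sinh u = (e^u-e^{-u})/2$, evaluating both resulting integrals, and using $e^\theta = (\sqrt{1-t}+1)/\sqrt{t}$ recovers the stated right-hand side. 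The case $z=1$ then reduces to \eqref{intsmes} via $\int_0^\theta e^u\sinh u\,du = (e^{2\theta}-1)/4 - \theta/2$ and $e^{2\theta} = (\sqrt{1-t}+1)^2/t$.

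The main obstacle is the Laplace inversion of $e^{z\arcsinh(\sinh\phi/\sqrt{1+2\mu})} = (\sinh\phi + \sqrt{\cosh^2\phi + 2\mu})^z / (1+2\mu)^{z/2}$ for non-integer $z$, since the fractional power prevents direct termwise inversion. I would handle it through a Bromwich contour representation that permits interchanging the $\sum_n$, the integral in $v$, and the inverse-Laplace contour, and then use the $\arcsinh$ substitution so that the resulting contour integral reorganizes into $\int_0^\theta e^{zu}\sinh u\,du$. An alternative route is induction on $n$: verify the base case $n=1$ directly using the first-step $(\tau,\rho)$ recursion of Theorem \ref{mean}, then derive a recursion for the generating function from the Markov kernel and check that the proposed form satisfies it.
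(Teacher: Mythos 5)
Your setup is sound and your intermediate target is correct: after the change of variable $\phi=\arcsinh(s)$ the slopes do form a unit-rate Poisson process, so $\phi_n=\arcsinh(S_n)$ is Gamma$(n)$; the two conditional Laplace transforms you state check out against the intensity \eqref{imbme}; the conditional independence of $\tau_n^T$ and $V_n^T$ given $\phi_n$ is legitimate Poisson splitting; and the claimed form $\frac{z}{2(1-t)^{3/2}}\int_0^{\ar(t)}e^{zu}\sinh u\,du$ is indeed algebraically equivalent to \eqref{genft}, with the $z=1$ case reducing to \eqref{intsmes} exactly as you say.

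The genuine gap is that the decisive step is asserted rather than performed: passing from the Laplace-transform-level expression to the densities (inversion in $\mu$), carrying out the ratio-density integral in $v$ and the integral in $\phi$, and showing that this "reorganizes" into the single integral above is precisely the hard computation, and you have not done it. You yourself identify the inversion of $(\sinh\phi+\sqrt{\cosh^2\phi+2\mu})^z/(1+2\mu)^{z/2}$ for non-integer $z$ as the main obstacle, and the two remedies you offer are programs, not proofs: the Bromwich-contour route needs the contour representation, the justification of all interchanges, and the actual evaluation; the induction route needs a closed recursion for the generating function, which the $(\tau,\rho)$ kernel does not give for the marginal laws of $\tau_n$ alone (the step from $\tau_n$ to $\tau_{n+1}$ involves $\rho_n$, so you would need the joint law, i.e.\ essentially Proposition \ref{con1}, and then still guess-and-verify). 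For contrast, the paper's proof avoids fractional powers entirely: it keeps the $n$-dependence in the Poisson factor $e^{-\Lambda(u)}\Lambda(u)^n/n!$ so the only inversion needed, that of $\IE e^{-aY_u}$ with $Y_u\ed Z^2/(1+u^{-2})+\sigma_u$, is explicit and yields the erfc kernel of Proposition \ref{Tn}; it sums in $n$ only after inversion, and then removes the $2\Gamma_{1/2}$ length randomization not by your ratio representation but by solving the integral equation of Lemma \ref{inteq11} via uniqueness of Laplace transforms. If you wish to keep your ratio route, note that for $0<z<1$ all terms are nonnegative, so you may sum densities termwise by monotone convergence and extend in $z$ by analyticity afterwards — but the inversion and the double integral still have to be carried out before this counts as a proof.
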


Before proving the theorem, we record some corollaries.
\begin{corollary}
For $n\geq1$,
\ban
f_{\tau_n}(t)=\frac{1}{4(1-t)^{3/2}}\sum_{k=1}^{\infty}\left(1-(-1)^{n+k}\right)\binom{k-1}{n-1}\frac{a(t)^k}{k!}. \label{tden}
\ee
\end{corollary}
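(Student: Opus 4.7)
The plan is to expand the generating function \eqref{genft} in powers of $z$ and read off the coefficient of $z^n$. Writing $a:=\ar(t)$ and $b:=\sqrt{1-t}$ for brevity, the key observation is the pair of identities $\cosh a = t^{-1/2}$ and $\sinh a = b/\sqrt t$, both immediate from $a=\arcosh(t^{-1/2})$. These convert the bracketed factor in \eqref{genft} into $-1 + (\cosh a - z\sinh a)e^{az}$. Replacing the hyperbolic functions by exponentials then yields the clean factorization
\ba
(\cosh a - z\sinh a)e^{az} = \tfrac12\bigl[(1-z)e^{a(1+z)} + (1+z)e^{-a(1-z)}\bigr].
\ee

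After multiplying by $z/(1-z^2) = z/[(1-z)(1+z)]$, the $(1\pm z)$ numerators cancel the corresponding denominator factors, so the right hand side of \eqref{genft} becomes
\ba
\frac{1}{2(1-t)^{3/2}}\left[-\frac{z}{1-z^2} + \frac{z\,e^{a(1+z)}}{2(1+z)} + \frac{z\,e^{-a(1-z)}}{2(1-z)}\right].
\ee
I would then expand $e^{a(1+z)}=\sum_{k\ge 0}a^k(1+z)^k/k!$ and $e^{-a(1-z)}=\sum_{k\ge 0}(-a)^k(1-z)^k/k!$. The $k=0$ contributions combine to $\tfrac12[z/(1+z)+z/(1-z)] = z/(1-z^2)$, which exactly cancels the leading $-z/(1-z^2)$ term, leaving a single sum over $k\ge 1$.

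For $k\ge 1$ the binomial theorem gives $z(1+z)^{k-1} = \sum_{n=1}^{k}\binom{k-1}{n-1}z^n$ and $z(1-z)^{k-1} = \sum_{n=1}^{k}(-1)^{n-1}\binom{k-1}{n-1}z^n$. Combining these with the respective factors $a^k$ and $(-a)^k$ and grouping by parity produces
\ba
\tfrac12\bigl[z(1+z)^{k-1} + (-1)^{k}z(1-z)^{k-1}\bigr] = \tfrac12\sum_{n=1}^{k}\bigl(1-(-1)^{n+k}\bigr)\binom{k-1}{n-1}z^n.
\ee
Interchanging the order of summation and reading off the coefficient of $z^n$ yields \eqref{tden}. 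The whole computation is elementary algebra once the hyperbolic-function identities are deployed; the only mild bookkeeping obstacle is to verify the cancellation of the $k=0$ contribution against $-z/(1-z^2)$ and then to keep signs straight when combining the two binomial expansions to produce the parity factor $1-(-1)^{n+k}$.
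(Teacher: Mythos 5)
Your proposal is correct and follows essentially the same route as the paper: both extract the coefficient of $z^n$ from the generating function \eqref{genft}, and your rearrangement into $\frac{z e^{a(1+z)}}{2(1+z)}+\frac{z e^{-a(1-z)}}{2(1-z)}-\frac{z}{1-z^2}$ is exactly the expansion the paper encodes through the auxiliary functions $h_n$ and the identity $\sum_{n\ge1}h_n(t)x^n=\frac{x}{1+x}\bigl(e^{t(1+x)}-1\bigr)$. The only difference is that you carry out the bookkeeping explicitly via the binomial expansions rather than invoking the $h_n$ identity (or the cited tables), which makes the argument self-contained but not substantively different.
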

\begin{proof}
Let 
\ba
e_n(t):=\sum_{k=n}^\infty\frac{t^k}{k!}=e^t-\sum_{k=0}^{n-1}\frac{t^k}{k!}, 
\ee
and $h_n(t):=e^t(-1)^n e_n(-t)$. 
By considering series expansion on the right hand side of \eqref{genft}, a little bookkeeping leads to
\ba
f_{\tau_n}(t)=\frac{h_n(a(t))-(-1)^nh_n(-a(t))}{4(1-t)^{3/2}}.
\ee
The corollary now follows after noting
\ba
h_n(t)=\sum_{k=1}^\infty \binom{k-1}{n-1}\frac{t^k}{k!},
\ee
which can be proved by equating coefficients in the identity
\ba
\sum_{n=1}^\infty  h_n(t)x^n=\frac{x}{1+x}\left(e^{t(1+x)}-1\right),
\ee
or read from \cite{as} (Section 6.5, equations 4, 13, and 29).
\end{proof}

Due to the relationship between Brownian motion and meander elucidated in the introduction, we can obtain results analogous to those above for
Brownian motion on a finite interval.  
\begin{corollary}\label{alpim}
Let $(\alpha_i)_{i\in\IZ}$ be the times of the vertices of the \cmin\ of a Brownian motion on $[0,1]$ as described in the introduction by \eqref{alps} and \eqref{alp0}.
If
$f_{\alpha_i}$ denotes the density of $\alpha_i$ for $i\in\IZ$,
then
\ban
\sum_{i\in\IZ} f_{\alpha_{i}}(t)=\frac{1}{2t(1-t)}, \label{bmzo}
\ee
which is the intensity function of the (not Poisson) point process of times of vertices of the \cmin\ of Brownian motion on $[0,1]$.
\end{corollary}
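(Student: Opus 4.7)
The plan is to use Corollary \ref{lighted} to decompose the vertex set into the argmin $\alpha_0$ together with the two symmetric pieces $\{\alpha_{-n}\}_{n\geq 1}$ and $\{\alpha_n\}_{n\geq 1}$, and then to integrate the explicit meander intensity \eqref{intsmes} against the arcsine density. By Corollary \ref{lighted}, for $n\ge 1$ one has $\alpha_{-n} = \tau_n \alpha_0$ with $\alpha_0$ (arcsine distributed, density $f_{\alpha_0}(t)=1/(\pi\sqrt{t(1-t)})$) independent of $(\tau_n)$, and $\alpha_n \stackrel{d}{=} 1-\alpha_{-n}$. Writing $H(t) := \sum_{n\ge 1} f_{\alpha_{-n}}(t)$, the identity \eqref{bmzo} is equivalent to showing
\[
\frac{1}{\pi\sqrt{t(1-t)}} + H(t) + H(1-t) = \frac{1}{2t(1-t)}.
\]

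Conditioning on $\alpha_0=a$ and using the independence of $\tau_n$ and $\alpha_0$, the density of $\tau_n a$ at $t$ is $(1/a) f_{\tau_n}(t/a)$ on $(0,a)$; summing on $n$ and integrating against the arcsine density gives
\[
H(t) = \int_t^1 \frac{1}{a}\,g(t/a)\,\frac{da}{\pi\sqrt{a(1-a)}}, \qquad g(u):=\sum_{n\ge 1} f_{\tau_n}(u),
\]
with $g$ given explicitly by \eqref{intsmes}. The substitution $v=t/a$ reduces this to the cleaner form
\[
H(t) = \frac{1}{\pi\sqrt{t}}\int_t^1 \frac{g(v)}{\sqrt{v-t}}\,dv.
\]

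To handle the right-hand side, first note the identity $\arcosh(v^{-1/2}) = \operatorname{arctanh}(\sqrt{1-v})$, which follows by writing both as $\tfrac12\log\bigl((1+\sqrt{1-v})/(1-\sqrt{1-v})\bigr)$. Substituting $w = \sqrt{1-v}$ (with $c := \sqrt{1-t}$) turns the integral into a combination of elementary integrals in $w$ and $\operatorname{arctanh}(w)$ over $[0,c]$, each evaluable by a trigonometric substitution or integration by parts. A useful reference sub-integral, obtained via $v = t + (1-t)\sin^2\theta$, is
\[
\int_t^1 \frac{dv}{v\sqrt{(1-v)(v-t)}} = \frac{\pi}{\sqrt{t}},
\]
which is the source of the $\pi$-factor needed to cancel the $1/\pi$ prefactor. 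Summing $f_{\alpha_0}(t)+H(t)+H(1-t)$ then produces the stated $1/(2t(1-t))$.

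The main obstacle is the explicit evaluation of the integral $\int_t^1 g(v)/\sqrt{v-t}\,dv$. The three terms in the numerator of $g$ yield pieces that diverge individually as $v\to 1$, so they must be combined (using the leading expansion $g(v)\sim 1/(4\sqrt{1-v})$) before integration, and the $\operatorname{arctanh}$ contribution requires a careful integration by parts. A consistency check is that both sides of \eqref{bmzo} are manifestly symmetric under $t\mapsto 1-t$, which must be reflected in the identity $H(t)+H(1-t)$ that emerges after integration.
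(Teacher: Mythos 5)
Your structural reduction is exactly the paper's: decompose via Corollary \ref{lighted}, write $\alpha_{-n}=\tau_n\alpha_0$ with $\alpha_0$ arcsine and independent of $\tau_n$, use $\alpha_n\ed 1-\alpha_{-n}$, and convolve the meander intensity \eqref{intsmes} with the arcsine density. Your rewriting $H(t)=\frac{1}{\pi\sqrt t}\int_t^1 g(v)(v-t)^{-1/2}\,dv$ is a correct change of variables of the paper's \eqref{hwt}, and your observations that the three pieces of $g$ must be combined before integrating (since the $1/(1-v)$ singularities cancel and $g(v)\sim 1/(4\sqrt{1-v})$) and that $\arcosh(v^{-1/2})={\rm arctanh}(\sqrt{1-v})$ are both right. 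Where you differ is only in how the integral is to be evaluated: the paper expands $g$ in powers of $1-v$ and integrates term by term through the ``arcsine transform'' of Proposition \ref{asiv}, summing the resulting series via ${}_2F_1$ identities, whereas you propose a direct elementary evaluation.

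The gap is that this evaluation --- which is the entire computational content of the corollary, and which the paper relegates to its appendix --- is never carried out. The only integral you actually verify is $\int_t^1 \frac{dv}{v\sqrt{(1-v)(v-t)}}=\pi/\sqrt t$; the harder contribution, $\int_t^1 (v-t)^{-1/2}\bigl[\frac{1}{1-v}-\frac{\arcosh(v^{-1/2})}{(1-v)^{3/2}}\bigr]dv$, is only asserted to be doable ``by a careful integration by parts.'' Note that after your substitution $w=\sqrt{1-v}$, the naive integration by parts produces a boundary term that blows up at $w=\sqrt{1-t}$, so some further device (an integral representation of ${\rm arctanh}$ with an interchange of integrals, a differentiation under the integral sign, or the paper's series route) is genuinely needed, and you have not exhibited one. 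Nor do you record the closed form the computation must produce, namely $H(t)=\frac{1}{4t}+\frac{1}{2\pi}\bigl(\frac{\arcos(\sqrt t)}{t(1-t)}-\frac{1}{\sqrt{t(1-t)}}\bigr)$, or check that $f_{\alpha_0}(t)+H(t)+H(1-t)$ then collapses to $1/(2t(1-t))$ using $\arcos(\sqrt t)+\arcos(\sqrt{1-t})=\pi/2$. The plan is sound and would succeed once this work is done, but as written the decisive step is missing.
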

\begin{proof}
Since $\alpha_n\ed1-\alpha_{-n}$, observe that 
\ban
\sum_{i\in\IZ} f_{\alpha_{i}}(t)=\sum_{i=1}^{\infty}f_{\alpha_{-i}}(t)+\sum_{i=1}^{\infty}f_{\alpha_{-i}}(1-t)+f_{\alpha_0}(t), \label{wwwww}
\ee
and that $\alpha_0$ has the arcsine distribution so that
$f_{\alpha_0}(t)=1(\pi\sqrt{t(1-t)})$.
We will show
\ba
\sum_{i=1}^{\infty}f_{\alpha_{-i}}(t)=\frac{1}{4t}+\frac{1}{2\pi}\left(\frac{\arcos(\sqrt{t})}{t(1-t)}-\frac{1}{\sqrt{t(1-t)}}\right),
\ee
which after substituting and simplifying in \eqref{wwwww}, will prove the corollary.

Since $\alpha_{-i}\ed\alpha_0\tau_i$, with $\alpha_0$ and $\tau_i$ independent, we have
\ban
\sum_{i=1}^{\infty}f_{\alpha_{-i}}(t)&=\sum_{i=1}^{\infty}f_{\alpha_0\tau_i}(t) \notag \\
&=\frac{1}{\pi} \sum_{i=1}^{\infty}\int_t^1v^{-3/2}(1-v)^{-1/2}f_{\tau_i}(t/v)dv \notag \\
&=\frac{1}{\pi} \int_t^1v^{-3/2}(1-v)^{-1/2} \left(\sum_{i=1}^{\infty}f_{\tau_i}(t/v)\right) dv, \label{hwt}
\ee
where the second equality is due to the arcsine density of $\alpha_0$, and the last by Fubini's theorem.

The sum in \eqref{hwt} can be evaluated using \eqref{intsmes} of Theorem \ref{denstease}, and the corollary will follow
after evaluating the integral in \eqref{hwt}.  There is some subtlety in carrying out this integration, so we refer to 
the appendix for the relevant calculations.  
\end{proof}
\begin{remark}
The method of proof of Corollary \ref{alpim} can be used to obtain an expression for $f_{\alpha_{-i}}(t)$, 
for $i\in\IN$.  For example,
\eqref{tden} implies that
\ba
f_{\tau_1}(t)&=\frac{1}{2(1-t)^{3/2}}\left(t^{-1/2}-1\right) \\
	&= \frac{1} {2  } \sum_{n=1}^\infty \frac{ (\hf)_{n} ( 1 - t)^{n-\thf} }{n!}, 
\ee
where $(a)_n=a(a+1)\cdots(a+n-1)$.
Using the Proposition \ref{asiv} of the appendix, we find
\ba
f_{\alpha_{-1}}(t)&=\frac{1}{2\pi} \int_t^1v^{-3/2}(1-v)^{-1/2}f_{\tau_1}(t/v)dv \\
	&=\frac{1} {2 \sqrt{\pi t} } \sum_{n=1}^\infty 
\frac{ (\hf)_{n} \Gamma(n - \hf)( 1 - t)^{n-1} }{n! (n-1)! }.
\ee
As the index~$i$ increases, these expressions become more complicated, but it is in principle possible to obtain expressions for $f_{\alpha_{-i}}$ by
expanding $f_{\tau_i}$ appropriately.

\end{remark}

\begin{corollary}
The point process of times of vertices of the \cmin\ of Brownian motion on $[0, \infty)$ has intensity function $(2u)^{-1}$.
\end{corollary}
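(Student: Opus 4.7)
The plan is to reduce this to Corollary~\ref{alpim} via Brownian scaling, then pass to the limit as the time horizon tends to infinity. Specifically, by Brownian scaling, if $(\unC_T(u),\, 0 \le u \le T)$ denotes the \cmin\ of Brownian motion on $[0,T]$, then the set of its vertex times is distributed as $T$ times the set of vertex times of the \cmin\ of a standard Brownian motion on $[0,1]$. Hence the intensity function of vertex times of $\unC_T$ at $u \in (0, T)$ is, by Corollary~\ref{alpim},
\[
\frac{1}{T} \cdot \frac{1}{2(u/T)(1 - u/T)} \;=\; \frac{1}{2u(1 - u/T)}.
\]

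Next, I would justify that the intensity on $[0, \infty)$ is the pointwise limit as $T \to \infty$, which equals $(2u)^{-1}$. The \cmin\ of $B$ on $[0, \infty)$, say $\unC_\infty$, is well-defined since $B(t)/t \to 0$ almost surely. For $T < T'$, convexity on the larger interval forces $\unC_{T'} \le \unC_T$ on $[0, T]$, so $\unC_T \downarrow \unC_\infty$ pointwise. A key observation is that if $t^* \in (a, b)$ is a vertex of $\unC_{T'}$ (with $b < T < T'$), then $B(t^*) = \unC_{T'}(t^*) \le \unC_T(t^*) \le B(t^*)$, forcing $t^*$ to be a vertex of $\unC_T$ as well. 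This gives set-theoretic inclusion of vertex sets on $(a, b)$ in $T$, and a similar argument shows that the intersection over all $T > b$ equals the vertex set of $\unC_\infty$ in $(a, b)$.

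From this monotonicity of vertex counts $N_T(a,b) \downarrow N_\infty(a,b)$, I would conclude via dominated convergence (with $N_{T_0}(a,b)$ of finite expectation serving as the dominant for some fixed $T_0 > b$):
\[
\IE[N_\infty(a, b)] \;=\; \lim_{T \to \infty} \IE[N_T(a, b)] \;=\; \lim_{T \to \infty} \int_a^b \frac{du}{2u(1 - u/T)} \;=\; \int_a^b \frac{du}{2u},
\]
confirming the claimed intensity $(2u)^{-1}$ on $(0, \infty)$.

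The main obstacle is the limit argument: ensuring that the vertex sets of the finite-horizon \cmin s stabilize correctly to the vertex set of $\unC_\infty$ on bounded intervals away from $0$. This is handled by the monotonicity $\unC_T \downarrow \unC_\infty$ together with the observation that any vertex of a ``later'' \cmin\ is automatically a vertex of every ``earlier'' one, so vertex sets squeeze down to those of $\unC_\infty$ in the limit.
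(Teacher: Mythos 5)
Your argument is correct, but it takes a genuinely different route from the paper. The paper's proof is a two-line transfer: it cites the fact that the vertex-time process of the \cmin\ of Brownian motion on $[0,1]$ has the same law as that of the standard Brownian bridge, then uses that the Doob (space--time) transform carrying the bridge on $[0,1]$ to Brownian motion on $[0,\infty)$ preserves contact points of the \cmin, so the change of variable $u=t/(1-t)$ applied to the intensity $1/(2t(1-t))$ of Corollary \ref{alpim} immediately yields $(2u)^{-1}$. You instead scale Corollary \ref{alpim} to $[0,T]$, getting intensity $1/(2u(1-u/T))$, and pass to the limit $T\to\infty$; the substance of your proof is the monotonicity $\unC_{T'}\le \unC_T\le B$ on $[0,T]$ for $T<T'$, which shows that contact points of a later minorant are contact points of every earlier one, so the vertex sets in a fixed window $(a,b)$ decrease, and their counts (a.s.\ finite) converge to the count for $\unC_\infty$, whence $\IE N_\infty(a,b)=\lim_T \IE N_T(a,b)=\int_a^b (2u)^{-1}du$ by monotone/dominated convergence. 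This buys self-containedness: you avoid both external inputs of the paper (the bridge identity from the cited companion paper and the preservation of vertices under the $h$-transform), at the cost of the limiting argument. One small detail you should make explicit is the identification of the pointwise limit $\lim_T \unC_T$ with $\unC_\infty$ (the decreasing limit is convex and lies below $B$, hence is at most $\unC_\infty$, while each $\unC_T\ge\unC_\infty$ gives the reverse inequality); this is needed for the inclusion ``$t^*$ a contact point of every $\unC_T$ implies $t^*$ is a contact point of $\unC_\infty$,'' and with it your squeeze of the vertex sets is complete.
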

\begin{proof}
From \cite{jpfb09}, the process of times of vertices of Brownian motion on $[0,1]$ has the distribution of the analogous
process for standard Brownian
bridge.  Also, the Doob transform which maps standard Brownian bridge to infinite horizon Brownian motion
preserves vertices of the \cmin.  Thus, we apply the time change of variable $u=t/(1-t)$ of the Doob transform
to \eqref{bmzo} of Corollary \ref{alpim} which yields the result.
\end{proof}

Now, in order to prove Theorem \ref{denstease}, we consider the \cmin\ of a meander of length a $2\Gamma_{1/2}$ random variable as
the faces of positive slope of the \cmaj\ of a Brownian
motion on $[0, 2\Gamma_1]$ similar to Corollary~\ref{mecor}. 
We collect the following facts.
\begin{lemma}
Let $B$ a Brownian motion, $(\ovC_t, 0\leq t \leq 2\Gamma_1)$ be the \cmaj\ of $B$ on $[0, 2\Gamma_1]$, and $\ovC_t'$ denote the right
derivative of $\ovC_t$.  If
\ban
\sigma_u:=\sup\{t>0, \ovC_t'\geq 1/u\}, \label{sigu}
\ee
then
\ban
\IE e^{-a \sigma_u}=\frac{1+\sqrt{1+u^2}}{1+\sqrt{1+u^2+2a u^2}}. \label{siglap}
\ee
\end{lemma}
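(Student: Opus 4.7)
The plan is to express $\sigma_u$ as a sum of face lengths of the concave majorant and then apply the Poisson exponential formula together with Theorem~\ref{bmpp} (adapted to the time horizon $2\Gamma_1$ via the remark following Corollary~\ref{mecor}).

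First, since the right derivative $\ovC_t'$ of the concave majorant is nonincreasing in $t$, the time $\sigma_u$ is precisely the right endpoint of the last face whose slope is at least $1/u$, and this equals the total length of all faces of the concave majorant with slope $\geq 1/u$. By the symmetry $B\mapsto -B$, the concave majorant of $B$ on $[0,2\Gamma_1]$ corresponds to the convex minorant of $-B$ on $[0,2\Gamma_1]$; combining Theorem~\ref{bmpp} with Brownian scaling (replacing $\Gamma_1$ by $2\Gamma_1$, as in the remark after Corollary~\ref{mecor}) shows that the lengths $x$ and slopes $s$ of the faces of the concave majorant of $B$ on $[0,2\Gamma_1]$ form a Poisson point process on $\IR^+\times\IR$ with intensity
\ba
\frac{\exp\{-\frac{x}{2}(1+s^{2})\}}{\sqrt{2\pi x}}\,ds\,dx.
\ee
Consequently, by Campbell's theorem,
\ba
\IE e^{-a\sigma_u}=\exp\left\{-\int_{1/u}^{\infty}\int_{0}^{\infty}(1-e^{-ax})\,\frac{\exp\{-\frac{x}{2}(1+s^{2})\}}{\sqrt{2\pi x}}\,dx\,ds\right\}.
\ee

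Next I would evaluate the inner integral using the elementary identity $\int_{0}^{\infty} e^{-cx}(2\pi x)^{-1/2}\,dx = (2c)^{-1/2}$ (see~\eqref{gamg}) to obtain
\ba
\int_{0}^{\infty}(1-e^{-ax})\,\frac{\exp\{-\frac{x}{2}(1+s^{2})\}}{\sqrt{2\pi x}}\,dx
= \frac{1}{\sqrt{1+s^{2}}}-\frac{1}{\sqrt{1+s^{2}+2a}}.
\ee
For the outer integral, the antiderivative $\int ds/\sqrt{c+s^{2}}=\log(s+\sqrt{s^{2}+c})$ is divergent at $+\infty$, but the difference is integrable and a truncation argument (integrating from $1/u$ to $L$ and letting $L\to\infty$) gives
\ba
\int_{1/u}^{\infty}\!\!\left(\frac{1}{\sqrt{1+s^{2}}}-\frac{1}{\sqrt{1+s^{2}+2a}}\right)ds
= \log\!\left(\frac{1/u+\sqrt{1/u^{2}+1+2a}}{1/u+\sqrt{1/u^{2}+1}}\right),
\ee
which after clearing the $1/u$ becomes $\log\bigl((1+\sqrt{1+u^{2}+2au^{2}})/(1+\sqrt{1+u^{2}})\bigr)$. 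Exponentiating the negative yields~\eqref{siglap}.

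The only subtle point is the handling of the divergent outer integral at infinity; everything else is a direct application of Campbell's formula combined with two elementary integrations. One could alternatively derive \eqref{siglap} by mimicking the Girsanov--Denisov argument used in the proof of Theorem~\ref{bmpp} to obtain an ``argmax'' Laplace transform and then setting the time horizon to $2\Gamma_1$ and the drift to $1/u$, but the Poisson point process route is shorter and more in keeping with the tone of this section.
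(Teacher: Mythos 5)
Your proposal is correct and follows essentially the same route as the paper: identify $\sigma_u$ as the total length of the faces of the concave majorant with slope at least $1/u$, invoke the Poisson description with the horizon-$2\Gamma_1$ intensity $\exp\{-\tfrac{x}{2}(1+s^2)\}/\sqrt{2\pi x}$ (the paper writes it in inverse-slope variables), and apply the exponential (Campbell) formula. The only difference is that you carry out the two integrations explicitly, including the truncation at infinity, which the paper leaves to the reader; your evaluation is correct.
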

\begin{proof}
We make the change of variable $a=1/u$ in the Poisson process intensity measure given by \eqref{imbm}, so 
that the intensity measure of the lengths and inverses of positive slopes of $\ovC_t$ is given by
\ban
\frac{\exp\{-\frac{t}{2}\left(1+u^{-2}\right)\}}  {u^2 \sqrt{2 \pi t}}   \, dt\,du, \hspace{5mm} t,u \geq0. \label{tt2}
\ee

The lemma follows after noting that $\sigma_u$ can alternatively be defined as the sum
of the lengths of the points of the Poisson point process given by \eqref{tt2} with inverse slope smaller than $u$, so that
\ba
\IE e^{-a \sigma_u}&=\exp\left\{-\int_0^\infty\left(1-e^{-a t}\right)\int_0^u\frac{1}{v^2\sqrt{2\pi t}}\exp\left\{-\frac{t}{2}\left(1+v^{-2}\right)\right\}dv dt\right\}.
\ee
\end{proof}

Because the segments of the \cmaj\ of $B$ appear in order of decreasing slope, it will be useful for the purpose of tracking indices to first discuss the number
of segments with slope smaller than a given value.  
\begin{lemma}\label{inslop}
The intensity function of the Poisson point process of inverse slopes $u$ of $\ovC$, the \cmaj\ of
a Brownian motion on $[0, 2\Gamma_1]$, is
\ba
\lambda(u):= \frac{ 1  } { u \sqrt{ 1 +  u^2 } }.
\ee
The number of segments of $\ovC$ with slope smaller than $1/u$ is a Poisson random variable
with mean
\ban
\Lambda(u) :=  \int_u^{\infty} \lambda(v) dv  = \arcsinh (u^{-1}). \label{lamb}
\ee
\end{lemma}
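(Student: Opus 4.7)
The plan is to obtain both assertions directly by integrating the explicit two-dimensional intensity measure \eqref{tt2} of the Poisson point process of lengths $t$ and inverse slopes $u$. Since the Poisson point process of inverse slopes alone is the image under the projection $(t,u)\mapsto u$, its intensity $\lambda(u)\,du$ is obtained by integrating out $t$:
\[
\lambda(u)\,du = \int_0^\infty \frac{\exp\{-\frac{t}{2}(1+u^{-2})\}}{u^2\sqrt{2\pi t}}\, dt \; du.
\]
Applying the elementary identity \eqref{gamg} with $c = (1+u^2)/u^2$ gives $\int_0^\infty (2\pi t)^{-1/2} e^{-tc/2}\,dt = 1/\sqrt{c} = u/\sqrt{1+u^2}$, and hence
\[
\lambda(u) = \frac{1}{u^2}\cdot\frac{u}{\sqrt{1+u^2}} = \frac{1}{u\sqrt{1+u^2}},
\]
as claimed.

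For the second claim, since the number of points of a Poisson point process with inverse slope exceeding $u$ (equivalently, slope less than $1/u$) is Poisson with mean $\int_u^\infty \lambda(v)\,dv$, it suffices to compute this integral. The substitution $w = 1/v$, $dw = -v^{-2}dv$, transforms
\[
\int \frac{dv}{v\sqrt{1+v^2}} = -\int \frac{dw}{\sqrt{1+w^2}} = -\arcsinh(w) + C = -\arcsinh(v^{-1}) + C.
\]
Since $\arcsinh(v^{-1})\to 0$ as $v\to\infty$, we obtain $\Lambda(u) = \int_u^\infty \lambda(v)\,dv = \arcsinh(u^{-1})$, completing the proof.

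Neither step poses any real obstacle; the only mild point of care is recognizing that the Poisson structure of the slopes follows from that of the joint length-slope process by the standard mapping/marginalization property of Poisson point processes, and that the antiderivative simplifies cleanly through the reciprocal substitution rather than through a direct hyperbolic substitution.
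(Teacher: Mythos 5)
Your proof is correct and follows the same route as the paper: the paper's (very terse) proof likewise obtains $\lambda(u)$ by integrating the length variable $t$ out of the joint intensity \eqref{tt2} and then notes the Poisson count statement is immediate. Your additional details — the use of \eqref{gamg} and the reciprocal substitution giving $\arcsinh(u^{-1})$ — are accurate elaborations of exactly that argument.
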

\begin{proof}
Integrating out the lengths $t$ from \eqref{tt2}
yields the intensity $\lambda(u)$ and the second statement is evident from the first.
\end{proof}

Define $T_0$ to be the time of the maximum of $B$ on $[0,2\Gamma_1]$ and for $n=1, 2, \ldots$, let $T_n$ be the time of the left endpoint of the the face of the \cmaj\ with $n$th smallest positive slope.  Note that $T_0>T_1>\ldots$  and that Brownian scaling implies that $T_n\ed2\Gamma_{1/2}\tau_n$.  Our basic strategy is to obtain information about
the~$T_n$ and then ``de-Poissonize" in order to yield
analogous information for the~$\tau_n$.  

\begin{proposition}\label{Tn}
Let $f_{T_n}$ denote the density of $T_n$.  Then
\ba
f_{T_n } (t) =\frac{e^{- t/2}}{2}
\int_0^\infty \frac{\arcsinh^n (v )}{n!} \, \erfc \left( v\sqrt{t/2} \right)\, dv,
\ee
where
\ba
\erfc(x)=\frac{2}{\sqrt{\pi}}\int_x^\infty e^{-r^2}dr=\IP(Z^2>2x^2). 
\ee
\end{proposition}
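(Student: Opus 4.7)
My plan is to compute the Laplace transform $\IE e^{-aT_n}$ and match it to the Laplace transform of the claimed density. The key observation is that if $U_{(n)}$ denotes the $n$-th largest inverse slope among positive-slope faces (equivalently, the inverse slope of the face with $n$-th smallest positive slope), then $T_n$ equals the sum of lengths of all faces with inverse slope strictly less than $U_{(n)}$. By the restriction property of the underlying Poisson point process of $(L_i,U_i)$, conditionally on $U_{(n)}=u$ this sum is distributed as $\sigma_u$ from \eqref{sigu}, since $\sigma_u$ sums over $\{U_i\le u\}$ and the unconditioned process a.s.\ has no point at $u$. The density of $U_{(n)}$ follows from $\Lambda(U_{(n)})\sim\Gamma_n$ under the change of variable $\Lambda(u)=\arcsinh(u^{-1})$, and after simplification works out to $f_{U_{(n)}}(u)=\arcsinh^{n-1}(u^{-1})/((n-1)!\sqrt{1+u^2}(1+\sqrt{1+u^2}))$.

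Combining these ingredients via
\[
\IE e^{-aT_n}=\int_0^\infty \IE e^{-a\sigma_u}\,f_{U_{(n)}}(u)\,du,
\]
using \eqref{siglap}, substituting $v=u^{-1}$, and rationalizing $1/(v+\sqrt{v^2+1+2a})$, one obtains
\[
\IE e^{-aT_n}=\frac{1}{(n-1)!(1+2a)}\int_0^\infty\frac{\arcsinh^{n-1}(v)(\sqrt{v^2+1+2a}-v)}{\sqrt{v^2+1}}\,dv.
\]
A single integration by parts, using $\tfrac{d}{dv}(\arcsinh^n(v)/n!)=\arcsinh^{n-1}(v)/((n-1)!\sqrt{v^2+1})$ and noting that the boundary contributions vanish (since $\arcsinh^n(0)=0$ and $\sqrt{v^2+1+2a}-v=O(1/v)$ at infinity), transforms this into
\[
\IE e^{-aT_n}=\frac{1}{1+2a}\int_0^\infty\frac{\arcsinh^n(v)}{n!}\left(1-\frac{v}{\sqrt{v^2+1+2a}}\right)dv.
\]

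Separately, an elementary computation (one integration by parts in~$t$) gives $\mathcal{L}[\erfc(v\sqrt{t/2})](s)=(1/s)(1-v/\sqrt{2s+v^2})$, and then the shift $\mathcal{L}[e^{-t/2}f](a)=\mathcal{L}[f](a+1/2)$ shows that the above display is exactly the Laplace transform of the density claimed in the Proposition. Uniqueness of Laplace transforms concludes.

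The main obstacle I anticipate is the integration-by-parts step: it is the key algebraic maneuver that simultaneously upgrades $\arcsinh^{n-1}$ to $\arcsinh^n$ and converts $\sqrt{v^2+1+2a}-v$ into the form $1-v/\sqrt{v^2+1+2a}$ needed to match the Laplace transform of $\erfc$. The Palm-conditioning identification $T_n\mid U_{(n)}=u\ed\sigma_u$ also warrants careful justification, although it is a standard consequence of the Poisson restriction property once one notes that the unconditioned process has no point at $u$ almost surely.
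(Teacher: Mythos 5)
Your proof is correct, but it is organized differently from the paper's. The paper conditions on the inverse slope $U_n$ of the face running from $T_{n+1}$ to $T_n$ (the $(n+1)$th smallest slope), so that conditionally $T_n$ is the sum $\sigma_u$ plus an extra gamma-type length $Z^2/(1+u^{-2})$; it then writes the Palm-type joint density of $(U_n,T_{n+1},T_n)$, arrives at the mixture formula \eqref{tt33} with $Y_u\ed Z^2/(1+u^{-2})+\sigma_u$, and \emph{explicitly inverts} the Laplace transform built from \eqref{siglap} to get the closed form \eqref{yud} for $f_{Y_u}$ in terms of $\erfc$, finishing by integration at the density level. You instead condition on the inverse slope of the face with $n$th smallest slope itself, so that conditionally $T_n\ed\sigma_u$ with no extra length term (your Palm/restriction justification is the same kind of step as the paper's joint-density display, and your formula $f_{U_{(n)}}(u)=\lambda(u)e^{-\Lambda(u)}\Lambda(u)^{n-1}/(n-1)!$ with \eqref{lamb} is right), and you then stay entirely at the level of Laplace transforms: compute $\IE e^{-aT_n}$ as a mixture of \eqref{siglap}, perform the integration by parts that trades $\arcsinh^{n-1}$ for $\arcsinh^n$, compute the elementary transform of the claimed density, and invoke uniqueness. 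What each route buys: the paper's argument is constructive (it produces the density without knowing it in advance) at the cost of the inversion step \eqref{yud}; yours avoids any Laplace inversion and uses only elementary manipulations, but is verificatory — you must already know the target formula — and it needs the boundary terms in the integration by parts to vanish, which holds because $\arcsinh^n(v)$ grows only logarithmically while $\sqrt{v^2+1+2a}-v=O(1/v)$; you should state that sub-polynomial growth explicitly, but it is not a gap.
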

\begin{proof}
For each $n$ we can find the distribution of $T_n$ by conditioning on the
inverse slope $U_n$ of the segment from $T_{n+1}$ to $T_n$. 
We can obtain such an expression because
$\{(T_{n-1}-T_n,U_{n-1}): n\in\IN\}$ is the collection of points of a Poisson process with intensity measure given by \eqref{tt2},
so that we can write down 
\ba
\frac{\IP(U_n \in du, T_{n+1} \in dv, T_n \in dt)}{du\,dv\,dt} = f_{\sigma_u}(v)  \frac{\exp\{-\frac{(t-v)}{2}\left(1+u^{-2}\right)\}}  {u^2 \sqrt{2 \pi (t-v)}}\frac{ e^{-\Lambda(u) }  \Lambda(u)^n }{n!},
\ee
where we are using Lemma \ref{inslop},  $\Lambda(u)$ is given by \eqref{lamb}, and the definition of $\sigma_u$ is given by \eqref{sigu}.
Integrating out $u$ and $v$ and noting the convolution of densities, the expression above leads to 
\ban
f_{T_n}(t) =  \int_0^\infty \lambda(u) f_{Y_u}(t) \frac{ e^{-\Lambda(u) }  \Lambda(u)^n }{n!} du, \label{tt33}
\ee
where $Y_u\ed Z^2/(1+u^{-2})+\sigma_u$ and $Z$ is a standard normal random variable independent of $\sigma_u$.

We proceed to obtain a more explicit expression for $f_{T_n}$ after determining $f_{Y_u}$ by inverting its Laplace transform. Using \eqref{siglap},
we obtain
\begin{align}
\IE e^{-a Y_u}&=\IE e^{-a \sigma_u}\IE e^{-a Z^2/(1+u^{-2})} \notag \\
	&=\left(\frac{1+\sqrt{1+u^2}}{1+\sqrt{1+u^2+2a u^2}}\right)\left(\frac{\sqrt{1+u^2}}{\sqrt{1+u^2+2a u^2}}\right). \notag 
\end{align}
Inverting this Laplace transform we find that
\ban
f_{Y_u}(t) = \frac{ \sqrt{ 1 + u^2} \,( 1 + \sqrt{ 1 + u^2 } ) }{ 2 u^2 } \erfc \left( \frac{\sqrt{t/2}}{u} \right) e^{-t/2}. \label{yud}
\ee
Combining \eqref{tt33} and \eqref{yud} yields 
\ba
f_{T_n } (t) = \frac{e^{- t/2}}{2}\int_0^\infty \frac{\arcsinh^n (u^{-1})}{n!}  \, \erfc \left( \frac{\sqrt{t/2}}{u} \right) u^{-2} \, du,
\ee
and the result is proved after making the change of variable $u = 1/v$.
\end{proof}

We are now in a position to prove Theorem \ref{denstease}.
\begin{proof}[Proof of Theorem \ref{denstease}]
Proposition \ref{Tn} implies that for $-1<z\leq1$, 
\begin{align}
\sum_{n=1}^\infty& z^n f_{T_n } (t)  \notag \\
&=\frac{e^{- t/2}}{2}
\int_0^\infty\left( \left(v+\sqrt{1+v^2}\right)^z-1\right) \, \erfc \left( v\sqrt{t/2} \right)\, dv. \label{pser}
\end{align}
From this point, the theorem will be proved after de-Poissonizing \eqref{pser} to obtain an analogous expression with $\tau_n$ in place of $T_n$.

Because $T_n\ed 2 \Gamma_{1/2} \tau_n$, Brownian scaling implies
\begin{align}
f_{T_n } (t)&=\int_{t}^\infty f_{x\tau_n}(t)\frac{e^{-x/2}}{\sqrt{2\pi x}} dx \notag \\
&=\int_{t}^\infty f_{\tau_n}(t/x)x^{-1}\frac{e^{-x/2}}{\sqrt{2\pi x}} dx \notag \\
&=\int_{0}^1 f_{\tau_n}(u) \frac{e^{-t/(2u)}}{\sqrt{2\pi t u}} du, \notag 
\end{align}
so that for $-1<z\leq 1$ and $F(z,t)=\sum_{n\geq1}z^n f_{\tau_n}(t)$, we have
\begin{align}
\sum_{n=1}^\infty& z^n f_{T_n } (t) =  \int_{0}^1 F(z, u) \frac{e^{-t/(2u)}}{\sqrt{2\pi t u}} du.\label{g1}
\end{align}
Combining \eqref{pser} and \eqref{g1}, we arrive at the integral equation
\begin{align}
\int_{0}^1 F(z,u) \frac{e^{-t/(2u)}}{\sqrt{2\pi t u}} du = \frac{e^{- t/2}}{2}
\int_0^\infty g(z,v) \, \erfc \left( v\sqrt{t/2}  \right)\, dv \notag
\end{align}
where $g(z, v)=\left(v+\sqrt{1+v^2}\right)^z-1$. 
After simplification, we obtain the following integral equation for $F$
\begin{align}
\int_{0}^1 F(z, u) \frac{e^{-t/(2u)}}{\sqrt{ u}} du = t e^{- t/2} 
\int_0^\infty e^{-tx^2/2}\left[ \int_{0}^xg(z, v)\,dv\right] dx. 
\end{align}
Lemma \ref{inteq11} below indicates the solution to this integral equation and the theorem follows after noting
\[\int_{0}^xg(z, v)\,dv=\frac{\left(x+\sqrt{1+x^2}\right)^z\left(x-z\sqrt{1+x^2}\right)+z}{1-z^2}-x\]
in the case where $|z|<1$, and 
\[\int_{0}^xg(1, v)\,dv=\frac{x\left(x+\sqrt{1+x^2}\right)+\arcsinh(x)}{2}-x.\]
\end{proof}

\begin{lemma}\label{inteq11}
Let $F$ a function on $(0,1)$ and $G$ a differentiable function on $(0,\infty)$
such that
\[\lim_{x\rightarrow0}G(x)/x=0.\]
If 
\begin{align}
\int_{0}^1 F(u) \frac{e^{-t/(2u)}}{\sqrt{ u}} du = t e^{- t/2} 
\int_0^\infty e^{-tx^2/2} G(x) dx, \hspace{5mm} t>0, \label{g2}
\end{align}
with the assumption that the integrals converge for $t>0$, 
then
\ba
F(u)=\frac{1}{2(1-u)^{3/2}}\left[\sqrt{\frac{1-u}{u}}G'\left(\frac{1-u}{u}\right)-G\left(\sqrt{\frac{1-u}{u}}\right)\right].
\ee 
\end{lemma}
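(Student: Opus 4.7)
The plan is to recognize both sides of \eqref{g2} as Laplace transforms over the half-line $(1/2,\infty)$ and invoke uniqueness of the Laplace transform.

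First, on the left-hand side I would substitute $v = 1/(2u)$, so $du = -dv/(2v^{2})$ and $u^{-1/2} = \sqrt{2v}$, to rewrite
\[
\int_{0}^{1} F(u)\,\frac{e^{-t/(2u)}}{\sqrt{u}}\,du
\;=\; \frac{1}{\sqrt{2}}\int_{1/2}^{\infty} \frac{F(1/(2v))}{v^{3/2}}\,e^{-tv}\,dv .
\]
On the right-hand side I would substitute $w = x^{2}/2 + 1/2$, so $dw = x\,dx$ and $x = \sqrt{2w-1}$, combining $e^{-t/2}e^{-tx^{2}/2} = e^{-tw}$, to obtain
\[
t e^{-t/2}\int_{0}^{\infty} e^{-tx^{2}/2}G(x)\,dx
\;=\; t\int_{1/2}^{\infty} e^{-tw}\,\frac{G(\sqrt{2w-1})}{\sqrt{2w-1}}\,dw .
\]

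Next I would absorb the factor of $t$ by writing $te^{-tw} = -\frac{d}{dw}e^{-tw}$ and integrating by parts with respect to $w$. Setting $h(w):=G(\sqrt{2w-1})/\sqrt{2w-1}$, the boundary term at $w = \infty$ is killed by $e^{-tw}$ (the hypothesis that the integral converges ensures adequate growth control on $G$), while the boundary term at $w = 1/2$ vanishes precisely because $h(w)\to 0$ as $w\downarrow 1/2$ by the assumption $\lim_{x\to 0}G(x)/x = 0$. This yields
\[
t e^{-t/2}\int_{0}^{\infty} e^{-tx^{2}/2}G(x)\,dx
\;=\; \int_{1/2}^{\infty} e^{-tw}\,h'(w)\,dw,
\]
where a direct chain-rule computation (using $y=\sqrt{2w-1}$, $y'=1/y$) gives
\[
h'(w) \;=\; \frac{G'(\sqrt{2w-1})}{2w-1} \;-\; \frac{G(\sqrt{2w-1})}{(2w-1)^{3/2}} .
\]

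Now both sides of \eqref{g2} are expressed as $\int_{1/2}^{\infty} e^{-tw}(\,\cdot\,)\,dw$, so uniqueness of the Laplace transform on $(1/2,\infty)$ forces
\[
\frac{F(1/(2v))}{\sqrt{2}\,v^{3/2}}
\;=\; \frac{G'(\sqrt{2v-1})}{2v-1} - \frac{G(\sqrt{2v-1})}{(2v-1)^{3/2}}
\quad\text{for a.e.\ }v > 1/2.
\]
The final step is straightforward book-keeping: substitute $u = 1/(2v)$, note $2v-1 = (1-u)/u$ and $\sqrt{2}\,v^{3/2} = 1/(2u^{3/2})$, and multiply through. After combining the two terms over the common denominator $2(1-u)^{3/2}$ and pulling out a factor of $\sqrt{(1-u)/u}$ from the $G'$ term, one obtains exactly the stated formula for $F(u)$.

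The only genuine obstacle is the integration-by-parts step: one must justify that both boundary contributions vanish. The vanishing at $w=1/2$ is exactly where the hypothesis $\lim_{x\to 0}G(x)/x = 0$ is used; the vanishing at $w=\infty$ follows from the assumed convergence of the right-hand integral for all $t>0$, which forces sufficient decay of $h(w)e^{-tw}$. The remaining algebra is mechanical.
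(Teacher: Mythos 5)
Your proof is correct and essentially the same as the paper's: after a change of variables both sides become Laplace transforms, the factor $t$ is absorbed by an integration by parts whose boundary term at the lower endpoint vanishes precisely by the hypothesis $\lim_{x\rightarrow0}G(x)/x=0$, and uniqueness of the Laplace transform identifies the integrands (the paper works directly with the kernel $e^{-tx^2/2}$ via the substitution $u=(1+x^2)^{-1}$ rather than your $v=1/(2u)$ and $w=x^2/2+1/2$, which is a purely cosmetic difference). Note that your final expression, like the computation in the paper's own proof, carries $G'\bigl(\sqrt{(1-u)/u}\bigr)$, so the argument of $G'$ in the printed statement of the lemma (which reads $(1-u)/u$ without the square root) is a typo that your derivation implicitly corrects.
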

\begin{proof}
The change of variable $u=(1+x^2)^{-1}$ on the left hand side of \eqref{g2} yields
\begin{align}
\int_{0}^\infty F((1+x^2)^{-1}) \frac{2xe^{-tx^2/2}}{(1+x^2)^{3/2}} dx = t 
\int_0^\infty e^{-tx^2/2}G(x) dx. \label{ffr}
\end{align}
Notice that the left hand side of \eqref{ffr} is essentially a Laplace transform.  Since
\[\lim_{x\rightarrow0} G(x)/x=0,\]
an integration by parts on the right hand side implies \eqref{ffr} can be written
\begin{align}
\int_{0}^\infty e^{-tx^2/2}  \frac{2xF((1+x^2)^{-1})}{(1+x^2)^{3/2}} dx = 
\int_0^\infty e^{-tx^2/2}\left[ \frac{xG'(x)-G(x)}{x^2}\right] dx. \notag
\end{align}
Uniqueness of Laplace transforms now yields
\[\frac{2xF((1+x^2)^{-1})}{(1+x^2)^{3/2}}=\frac{xG'(x)-G(x)}{x^2},\]
and the lemma follows after making the substitution $u=(1+x^2)^{-1}$.
\end{proof}

\section{Sequential Derivations}\label{seqcon}

As Theorem \ref{equiv} indicates, we can view the \tr\ recursion as a Markov chain independent of the Brownian framework from which
it was derived. 
We have the following fundamental result.
\begin{proposition}\label{stationary}
Let $(\rho_n,\tau_n)$ follow the \tr\ recursion for some arbitrary initial distribution of $(\rho_0,\tau_0)$, and 
let $\rho_n^*:= \rho_n/\sqrt{\tau_n}$ which represents the standardized final value of a Brownian path fragment from $(0,0)$ to $(\tau_n,\rho_n)$.
Whatever the initial distribution $(\rho_0, \tau_0)$, the distribution of $\rho_n^*$ converges in total variation as $n \te \infty$ to the
unique stationary distribution of $\rho_n^*$ for the \tr\ recursion, which is the distribution of $\sqrt{2 \Gamma_{3/2} U}$ where $U$
is a uniform $(0,1)$ random variable independent of $\Gamma_{3/2}$.
\end{proposition}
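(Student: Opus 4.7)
The plan is to reduce the two-dimensional $(\tau,\rho)$-recursion to a scalar affine stochastic recursion for $Y_n := (\rho_n^*)^2 = \rho_n^2/\tau_n$. A direct substitution from the defining relations $\rho_{n+1}=U_n\rho_n$ and $\tau_{n+1} = \tau_n\rho_{n+1}^2/(\tau_n Z_{n+1}^2 + \rho_{n+1}^2)$ gives
\begin{equation*}
Y_{n+1} = \frac{\rho_{n+1}^2(\tau_n Z_{n+1}^2+\rho_{n+1}^2)}{\tau_n\,\rho_{n+1}^2} = Z_{n+1}^2 + \frac{\rho_{n+1}^2}{\tau_n} = Z_{n+1}^2 + U_n^2\,Y_n,
\end{equation*}
a classical stochastic affine recursion with iid inputs $U_n\in(0,1)$ uniform and $Z_{n+1}^2\sim\chi^2_1$, independent of $Y_0$. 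Since $y\mapsto\sqrt{y}$ is a measurable bijection of $[0,\infty)$, total variation convergence of $\rho_n^*$ is equivalent to total variation convergence of $Y_n$, so it suffices to analyze $\{Y_n\}$.

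Next I would establish existence, uniqueness, and total variation convergence for $\{Y_n\}$ by standard Markov-chain theory in the spirit of Meyn--Tweedie. The identity $\IE[Y_{n+1}\mid Y_n]=\tfrac{1}{3}Y_n+1$ supplies a geometric drift towards bounded sets with Lyapunov function $V(y)=y$. For minorization, given $Y_n=y$ the law of $Y_{n+1}$ has a continuous Lebesgue density
\begin{equation*}
p(y,w)=\int_0^{\min(1,\sqrt{w/y})}\frac{1}{\sqrt{2\pi(w-u^2y)}}\,e^{-(w-u^2y)/2}\,du, \qquad w>0,
\end{equation*}
and for any fixed $K$ one sees that $p(y,w)\ge e^{-(K+2)/2}/\sqrt{2\pi(K+2)}>0$ uniformly for $(y,w)\in[0,K]\times[K+1,K+2]$. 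Hence $[0,K]$ is a small set; combined with the geometric drift this yields geometric ergodicity, hence total variation convergence to a unique stationary distribution regardless of the initial law of $(\rho_0,\tau_0)$.

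To identify the stationary law, I propose to verify that $Y^*:=2\Gamma_{3/2}U$ satisfies the fixed-point equation $\psi(s)=(1+2s)^{-1/2}\,\IE_U[\psi(sU^2)]$ obtained from $Y^*\ed U^2Y^*+Z^2$, where $\psi(s):=\IE e^{-sY^*}$ and $\IE e^{-sZ^2}=(1+2s)^{-1/2}$. Conditioning on $U$ and using $\IE e^{-t\Gamma_{3/2}}=(1+t)^{-3/2}$ gives
\begin{equation*}
\psi(s)=\int_0^1(1+2su)^{-3/2}\,du=\frac{1-(1+2s)^{-1/2}}{s}.
\end{equation*}
Substituting $t=u^2$ in $\IE_U[\psi(sU^2)]$ and integrating by parts reduces the integral to $\int_0^1 t^{-1/2}(1+2st)^{-3/2}\,dt=2/\sqrt{1+2s}$, yielding $\IE_U[\psi(sU^2)]=(\sqrt{1+2s}-1)/s$; multiplying by $(1+2s)^{-1/2}$ recovers $\psi(s)$. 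By the uniqueness established in the previous step, $Y^*$ is the stationary law of $\{Y_n\}$, and therefore $\rho_n^*\to\sqrt{2\Gamma_{3/2}U}$ in total variation.

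The main obstacle is guessing the mixture form $2\Gamma_{3/2}U$: without a hint from the Brownian background one would have to invert the functional equation for $\psi$ to discover it. Everything else is routine — the algebraic reduction to the scalar chain is elementary, the drift $\IE[Y_{n+1}\mid Y_n]=Y_n/3+1$ is immediate, and the density lower bound needed for minorization is a one-line estimate — so the Meyn--Tweedie machinery cleanly supplies both uniqueness and total variation convergence.
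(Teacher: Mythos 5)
Your proposal is correct, and its skeleton matches the paper's: both pass to the scalar Markov chain driven by $\rho^{*2}_{n+1}=Z_{n+1}^2+U_n^2\rho^{*2}_n$ (the paper writes this for $\rho_n^*$ itself) and both invoke Meyn--Tweedie to get total variation convergence to a unique stationary law. The differences are in how the two remaining steps are handled. For ergodicity, the paper simply asserts strong aperiodicity (the transition density has full support on the positive half line) and positive Harris recurrence, citing Theorem 13.3.1 of Meyn--Tweedie; you instead supply an explicit Foster--Lyapunov drift $\IE[Y_{n+1}\mid Y_n]=Y_n/3+1$ together with a minorization on $[0,K]$, which is more work but yields geometric ergodicity and makes the recurrence claim self-contained (one small caveat: your minorization measure, uniform on $[K+1,K+2]$, does not charge the small set, so aperiodicity is not automatic from it alone; it is immediate anyway from the everywhere-positive transition density, or by moving the target interval inside $[0,K]$). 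For identifying the stationary law, the routes genuinely diverge: the paper reduces the fixed-point identity $S\ed SU^2+Z^2$ for $S\ed 2\Gamma_{3/2}U$ via beta--gamma algebra to the identity $\frac{\Gamma_1}{\Gamma_1+\Gamma_1'}\Gamma_{3/2}\ed\frac{\Gamma_1}{\Gamma_1+\Gamma_1'}\Gamma_{1/2}+\Gamma_{1/2}'$ and quotes Dufresne's theorem, whereas you verify the fixed point directly by a Laplace transform computation, $\psi(s)=(1-(1+2s)^{-1/2})/s$ and $\IE_U[\psi(sU^2)]=(\sqrt{1+2s}-1)/s$, which I have checked and which is elementary and self-contained, at the cost of having to guess the candidate law rather than deriving it structurally. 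Both approaches are legitimate; yours trades the external citation for a short explicit calculation.
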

\begin{proof}
From the definition of the \tr\ recursion, the sequence $(\rho^*_n)_{n\geq0}$ satisfies
\ban
\rho^*_{n+1}=\sqrt{Z_{n+1}^2+U_n^2\left(\rho_n^*\right)^2}, \hspace{5mm} n\geq0, \label{strec}
\ee
where $(U_n)_{n\geq0}$ are i.i.d. uniform $(0,1)$ and $(Z_n)_{n\geq1}$ are i.i.d. standard normal, both independent of $\rho_0^*$.
Thus, the chain $(\rho_n^*)_{n\geq0}$ is Markovian and converges to its unique stationary distribution since it is strongly aperiodic (from any given state, the support of the density of
the transition kernel is the positive half line), and positive Harris recurrent (see Theorem 13.3.1 in \cite{MR1287609}). 

The relation \eqref{strec} also implies that in order to show the stationary distribution is as claimed, 
we must show that for $S\ed2 \Gamma_{3/2} U$, we have 
\ban
S \ed S U^2 + Z^2, \label{rdist}
\ee
for $U$ uniform $(0,1)$ and $Z$ standard normal, independent of each other and of~$S$.

After some manipulations using beta-gamma algebra, it can be seen that \eqref{rdist} is equivalent to 
\ban
\frac{\Gamma_1}{\Gamma_1+\Gamma_1'} \Gamma_{3/2} \ed \frac{\Gamma_1}{\Gamma_1+\Gamma_1'} \Gamma_{1/2} +\Gamma_{1/2}', \label{bga}
\ee
where all the
variables appearing are independent. The identity \eqref{bga} is precisely Theorem 1 of \cite{duf96} with $a=1$ and $b=c=1/2$.
\end{proof}

Which Brownian path fragments yield a stationary sequence as constructed in Proposition \ref{stationary}?
More precisely, in the framework of Section \ref{seq}, we want to determine for which settings  
\ban
\rho_0/\sqrt{\tau_0}\ed\sqrt{2\Gamma_{3/2}U}. \label{stat}
\ee 
For example, a standard Brownian meander has $(\tau_0, \rho_0) \ed (1, \sqrt{2\Gamma_1})$,
so that $\rho_0/\sqrt{\tau_0}~=\sqrt{2\Gamma_1}$.  But the distribution of $\Gamma_1$ and $\Gamma_{3/2}U$ are not the same,
since their means are $1$ and $3/4$, respectively.
However, in the following two examples, we will recover natural stationary sequences.

First, consider the sequential construction of Section \ref{seq} in terms of Groeneboom's construction
\cite{gboom83} 
of the \cmaj\ of a standard Brownian motion $B$ on $(0, \infty)$
as embellished by Pitman \cite{p83} and \Cinlar\ \cite{brownistan}. Of course, the \cmaj\ of $B$ is
minus one times the \cmin\ of $-B$. 
Our notation largely follows \Cinlar.
Fix $a >0$, let
\[
Z(a) : = \max_{t \ge 0} \{ B(t) - a t \} = \inf\{ x : x + at > B(t) \mbox{ for all } t \ge 0 \}
\]
and let $D(a)$ denote the time at which the max is attained. So $(D(a), Z(a) + a D(a) )$ is one vertex of the \cmaj\ of $B$.
Let $S_{-1} < S_{-2} < \cdots $ denote the successive slopes of the \cmaj\ to the left of $D(a)$, so $ a < S_{-1}$ almost surely.

We can now spell out a sequential construction of the \cmaj\ of Brownian motion
starting at time $D(a)$ and working from right to left. This is similar in principle, but more complex in detail, to the description provided by
\Cinlar \cite[(3.11),(3.12),(3.13)]{brownistan}, which works from left to right, and the construction given in \cite{MR2007793}.  

\begin{corollary}
Define the vertex-intercept sequence $(\tau_j, \rho_j) =\left(D(S_{-j-1}), Z(S_{-j})\right)$ for $j\geq1$ and
\eq
\rho_0 = Z(D(a)) \mbox{ and } \tau_0 = D(a).
\en
Then for all $a>0$, the sequence $(\tau_j, \rho_j)_{j\geq0}$ satisfies the \tr\ recursion and the process $\left(\rho_j/\sqrt{\tau_j}\right)_{j\geq0}$ is stationary.
\end{corollary}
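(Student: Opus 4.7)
The plan has two parts: reduce the sequential construction on $[0, D(a)]$ to an application of Theorem \ref{besbr} via time reversal, and verify stationarity by identifying the distribution of $Z(a)/\sqrt{D(a)}$ with the stationary law $\sqrt{2\Gamma_{3/2} U}$ of Proposition \ref{stationary}.

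For the recursion, I would first condition on $(D(a), Z(a)) = (t, z)$. Under this conditioning, the pre-max decomposition of Brownian motion with drift $-a$ at its supremum implies that $B$ on $[0, t]$ is a Brownian bridge from $(0, 0)$ to $(t, z + at)$ constrained to stay strictly below the line $y = z + as$ on $(0, t)$. Time reversal (which preserves the Brownian bridge law) makes $\tilde{B}(u) := B(t) - B(t-u)$ a Brownian bridge from $(0, 0)$ to $(t, z + at)$ with the constraint $\tilde{B}(u) > au$ on $(0, t)$. Writing $\tilde{B}(u) = W(u) + (z+at)u/t$ for a standard Brownian bridge $W$ on $[0, t]$, the process $\hat{B}(u) := \tilde{B}(u) - au = W(u) + zu/t$ is a Brownian bridge from $(0, 0)$ to $(t, z)$, and the constraint becomes simply $\hat{B}(u) > 0$ on $(0, t)$. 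Thus $\hat{B}$ is a BES(3) bridge from $(0, 0)$ to $(t, z)$ (cf.\ Section \ref{background}), and Theorem \ref{besbr} applied to $\hat{B}$ yields the $(\tau, \rho)$ recursion starting from $(\tau_0, \rho_0) = (t, z)$.

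Next I must match the vertex-intercept sequence from Theorem \ref{besbr} for $\hat{B}$ with $(\tau_j, \rho_j) = (D(S_{-j-1}), Z(S_{-j}))$ of the corollary. Since subtracting a linear term does not change vertex times, the vertices of the convex minorant of $\hat{B}$ correspond bijectively to those of the concave majorant of $B$ on $[0, t]$ via $V_j = t - \tau_j$. Using the continuity relation $Z(S_{-j-1}) - Z(S_{-j}) = (S_{-j} - S_{-j-1}) \tau_j$ at each vertex $\tau_j = D(S_{-j-1})$ (obtained from the concave majorant passing through that vertex with two different tangent slopes), one checks that the intercept at time $t$ of the linear extension of the $j$-th face of $\hat{B}$'s convex minorant equals $z - Z(S_{-j})$. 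Hence the Theorem \ref{besbr} intercepts are exactly $Z(S_{-j})$. Unconditioning over the joint law of $(D(a), Z(a))$ then gives the recursion unconditionally.

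For stationarity, I will compute the joint density of $(D(a), Z(a))$ via Williams' decomposition of BM with drift $-a$: $Z(a) \sim \mathrm{Exp}(2a)$, and conditional on $Z(a) = z$, $D(a)$ is the first passage time of a BM with drift $+a$ from $0$ to $z$. This yields
\ba
f_{D(a), Z(a)}(t, z) = \frac{2az}{\sqrt{2\pi t^3}} \exp\!\left(-\frac{(z+at)^2}{2t}\right), \qquad t, z > 0.
\ee
A change of variable $w = z^2/t$ followed by integration over $t$ shows that $Z(a)^2/D(a)$ has density $\mathrm{erfc}(\sqrt{w/2})$; a similar short computation identifies this as the density of $2\Gamma_{3/2} U$ as well. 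Hence $\rho_0/\sqrt{\tau_0} \ed \sqrt{2\Gamma_{3/2} U}$, the stationary law from Proposition \ref{stationary}, and so the Markov chain $(\rho_n^*)_{n \geq 0}$ is stationary throughout.

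The main obstacle is the conditioning step: $\{(D(a), Z(a)) = (t, z)\}$ is a null event, and justifying the ``Brownian bridge constrained below a line'' description of the pre-$D(a)$ path requires a regular-conditional-distribution argument (or an $h$-transform reformulation) together with the standard Williams-type decomposition for BM with drift. Once this decomposition is secured, the remaining work is a direct application of Theorem \ref{besbr} coupled with standard density calculations.
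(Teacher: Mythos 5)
Your argument is correct and has the same skeleton as the paper's proof: condition on $(D(a),Z(a))=(t,z)$, use the Williams-type decomposition of the drifted path at its maximum together with time reversal to identify the pre-$D(a)$ fragment, tilted by $-av$ and reversed, as a $BES(3)$ bridge from $(0,0)$ to $(t,z)$, and then invoke the sequential description. The paper packages exactly this step as \eqref{st1} combined with Corollary \ref{firstpassage} (which is itself Theorem \ref{besbr} transported by the time reversal of Proposition \ref{fpbes}), so your first two paragraphs re-derive rather than replace that route, and the null-event conditioning you flag is handled at the same level of rigour as the paper's bare assertion of \eqref{st1}; note also that your reading $\rho_0=Z(a)=B(D(a))-aD(a)$ is the one consistent with \eqref{st1}, where the bridge terminates at $r=Z(a)$. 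The one genuine divergence is the stationarity check: the paper quotes \Cinlar's Remark 3.2 for the representation $a^2D(a)=2\Gamma_{3/2}(1-\sqrt{U})^2$ and $aZ(D(a))=2\Gamma_{3/2}\sqrt{U}(1-\sqrt{U})$, whereas you derive the joint density of $(D(a),Z(a))$ from Williams' decomposition of Brownian motion with drift $-a$ and check directly that $Z(a)^2/D(a)$ has density $\erfc(\sqrt{w/2})$, which is also the density of $2\Gamma_{3/2}U$. Both of your computations are right (the exponential factor collapses via $e^{-2az}e^{-(z-at)^2/(2t)}=e^{-(z+at)^2/(2t)}$, and the $\erfc$ density of $2\Gamma_{3/2}U$ follows by integrating the $\chi^2_3$ density against the uniform scaling), so your version buys a self-contained proof of the key identity $\rho_0/\sqrt{\tau_0}\ed\sqrt{2\Gamma_{3/2}U}$ at the cost of a short calculation, while the paper's citation additionally records the finer joint beta-gamma representation of the pair $(D(a),Z(a))$.
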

\begin{proof}
According to the Williams decomposition of $B$ at time $D(a)$,  there is the equality in distribution of conditioned processes
\eq
\lb{st1}
(B(v)- av, 0 \le v \le D(a) \giv Z(a) = r, D(a) = t)
\ed  (r - X(t-v), 0 \le v \le t)
\en
for $X$ a $BES(3)$ bridge from $(0,0)$ to $(t,r)$.
It now follows from \re{st1} and Corollary \ref{firstpassage} that the sequence of pairs $(\tau_i, \rho_i)$ follows the \tr\ recursion.

To show the claim of stationary, it is enough to show that 
\ban
Z(D(a))/\sqrt{D(a)} \ed \sqrt{ 2 \Gamma_{3/2} U}, \label{amt}
\ee
for $U$ a uniform $(0,1)$ random variable independent of the gamma variable.  
However, \eqref{amt} follows easily from 
\Cinlar\ [Remark 3.2]\cite{brownistan} which gives the representation for $a >0$
\ba
a^2 D(a) = 2 \Gamma_{3/2} (1 - \sqrt{U})^2; ~~~~ a Z(D(a)) = 2 \Gamma_{3/2} \sqrt{U} (1 - \sqrt{U}).
\ee
\end{proof}

Our second construction of a stationary sequence as indicated by Proposition \ref{stationary} is derived from a standard Brownian bridge.   
Recall that
$$
0 < \cdots < \alpha_{-2} < \alpha_{-1} < \alpha_0 < \alpha_1 < \alpha_2 < \cdots < 1
$$
with $\alpha_-n \downarrow 0$ and $\alpha_n \uparrow 1$ as $n \te \infty$ denote the times
of vertices of the convex minorant of a Brownian motion $B$ on $[0,1]$, arranged relative to
$$
\alpha_0 := \argmin_{ 0 \le t \le 1 } B_t .
$$
The same random set of vertex times $\{ \alpha_i, i \in \ints \}$ can be
indexed differently as 
$$
\{ \alpha_i, i \in \ints \} = \{ \alphabr_i, i \in \ints \} 
$$
where 
$$
\alphabr_0 := \argmin_{ 0 \le t \le 1 } { B_t -t B_1} = \alpha_{J}
$$
for an integer-valued random index $J$, and 
$$
\alphabr_i = \alpha_{J + i }.
$$
See \cite{jpfb09} for further discussion of this relationship between the \cmin\ of a Brownian motion and bridge.
The following representation of the $\alphabr_i$ can be derived from
Denisov's decomposition for the unconditioned Brownian motion: for $n = 0,1, 2, \ldots$ we have
\ba
\alphabr_{-n} & = \taubr_{n} \alphabr_0 \\
\alphabr_{n} & = 1 - \htaubr_n ( 1 - \alpha_0 ) \ed 1 - \alphabr_{-n} 
\ee
where
\ban
0 = 1 - \taubr_0 < 1 - \taubr_1  < \cdots \label{kj1}
\ee
and 
\ban
0 = 1 - \htaubr_0 < 1 - \htaubr_1  < \cdots \label{kj2}
\ee
are the times of vertices of the convex minorants of two identically distributed 
{\em Brownian pseudo-meanders} derived by Brownian scaling of portions of the
the path of $(B_t -t B_1,0 \le t \le 1)$  on 
$[0,\alphabr_0]$ (with time reversed) and $[\alphabr_0,1]$ respectively. Note that the sequences 
$(\taubr_{n})$ and $(\htaubr_{n})$ are identicially distributed, but they are not independent of
each other, and neither are they independent of $\alphabr_0$. While this complicates
analysis of the  sequence $(\alphabr_i, i \in \ints)$, the Brownian pseudo-meander is of
special interest for a number of reasons, including the following corollary.

\begin{corollary}
Let $0 = 1 - \taubr_0 < 1 - \taubr_1  < \cdots$ be the times of the vertices of the \cmin\ of a Brownian pseudo-meander as defined above,
and let $\rhobr_1>\rhobr_2>\cdots$ be the process of the intercepts at time one of the extension of the faces of the \cmin\ as illustrated by Figure \ref{22}.
If $\rhobr_0$ is the value of the pseudo-meander at time one, then
the sequence $(\taubr_j, \rhobr_j)_{j\geq0}$ satisfies the \tr\ recursion and the process $\left(\rhobr_j/\sqrt{\taubr_j}\right)_{j\geq0}$ is stationary.
\end{corollary}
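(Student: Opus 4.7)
The plan has three stages: realise the pseudo-meander as a BES(3) bridge to a random endpoint, obtain the \tr\ recursion via Theorem \ref{besbr}, and match the initial distribution to the stationary law from Proposition \ref{stationary} using Vervaat's theorem.

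First, I would invoke the bridge version of Denisov's decomposition (Proposition \ref{dendecomp}), obtained by conditioning the BM decomposition on $B_1=0$: writing $M^\circ=-\min_{0\le t\le 1}(B_t - tB_1)$, given $\alphabr_0=s$ and $M^\circ=m$, the pre- and post-minimum fragments of $(B_t-tB_1+m)$ are independent BES(3) bridges from $(0,0)$ to $(s,m)$ and from $(0,0)$ to $(1-s,m)$ respectively. Brownian scaling then identifies the post-minimum pseudo-meander as a BES(3) bridge from $(0,0)$ to $(1,\rhobr_0)$ with $\rhobr_0 = m/\sqrt{1-s}$. Applying Theorem \ref{besbr} conditionally on $\rhobr_0$, the sequence $(\taubr_j,\rhobr_j)_{j\ge 0}$ satisfies the \tr\ recursion with $(\taubr_0,\rhobr_0)=(1,\rhobr_0)$; because the recursion's transition rule depends only on the current state, the property persists after averaging over the random initial value.

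For the stationarity claim, by Proposition \ref{stationary} it suffices to show $\rhobr_0\ed\sqrt{2\Gamma_{3/2}U}$ with $U$ uniform on $(0,1)$ independent of $\Gamma_{3/2}$. Vervaat's theorem states that the cyclic shift of the bridge by $\alphabr_0$, raised by $M^\circ$, yields a standard Brownian excursion $\Bex$ independent of $\alphabr_0$, which is uniform on $(0,1)$, with $M^\circ=\Bex_{1-\alphabr_0}$. Realising $\Bex$ as the Euclidean norm of three independent standard Brownian bridges gives $(\Bex_s)^2\ed 2s(1-s)\Gamma_{3/2}$, so with $U=\alphabr_0$,
\[
(\rhobr_0)^2 = \frac{(\Bex_{1-U})^2}{1-U} \ed \frac{2(1-U)U\Gamma_{3/2}}{1-U} = 2U\Gamma_{3/2},
\]
with $U$ independent of $\Gamma_{3/2}$, which is precisely the stationary law.

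The main technical obstacle lies in assembling the two classical ingredients: the bridge analog of Denisov's decomposition (a routine adaptation but requiring care) and Vervaat's theorem together with the 3D-Brownian-bridge representation of $\Bex$, which jointly pin down the joint distribution of $(\alphabr_0,M^\circ)$. Once these are in hand, the identity $\rhobr_0\ed\sqrt{2U\Gamma_{3/2}}$ follows from the elementary computation above, and stationarity of $(\rhobr_j/\sqrt{\taubr_j})_{j\ge 0}$ is immediate from Proposition \ref{stationary}.
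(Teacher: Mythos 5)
Your argument is correct, and its second half takes a genuinely different route from the paper. For the recursion, the paper notes that the pseudo-meander is absolutely continuous with respect to a $BES^0(3)$ process with density depending only on the final value and then quotes Theorem \ref{besbr}; your conditional bridge version of Denisov's decomposition (given $\alphabr_0$ and the depth $M^\circ$ of the minimum, the rescaled post-minimum fragment is a $BES(3)$ bridge from $(0,0)$ to $(1,\rhobr_0)$) accomplishes the same reduction to Theorem \ref{besbr}, so this part is essentially the paper's idea phrased through conditioning rather than absolute continuity. The real divergence is in identifying the law of $\rhobr_0$: the paper computes from Denisov's decomposition the explicit joint density of $\bigl(\alphabr_0,\,\alphabr_0 B_1-B_{\alpha_0}\bigr)$ and then checks $\rhobr_0\ed\sqrt{2U\Gamma_{3/2}}$ by direct integration, whereas you obtain the same identity softly from Vervaat's theorem ($M^\circ=\Bex_{1-\alphabr_0}$ with $\alphabr_0$ uniform and independent of $\Bex$) combined with the representation of the normalized excursion as the norm of a three-dimensional Brownian bridge, giving $\Bex_s^2\ed 2s(1-s)\Gamma_{3/2}$. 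Your route needs no density computation and makes the appearance of the $\chi^2_3$ variable $2\Gamma_{3/2}$ and of the independent uniform transparent; the paper's calculation stays entirely within the machinery it has already set up and yields the joint density of $(\alphabr_0,M^\circ)$ as a useful by-product. Two small remarks: you work with the post-minimum fragment, so $\rhobr_0=M^\circ/\sqrt{1-\alphabr_0}$, while the paper uses the time-reversed pre-minimum one, $M^\circ/\sqrt{\alphabr_0}$; these agree in law since $(\alphabr_0,M^\circ)\ed(1-\alphabr_0,M^\circ)$. Also the step $\Bex_{1-U}^2/(1-U)\ed 2U\Gamma_{3/2}$ with $\Gamma_{3/2}$ independent of $U$ should be justified by conditioning on $U$ and using the independence of $U$ and $\Bex$ furnished by Vervaat; as written it is fine but deserves that one sentence.
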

\begin{proof}
Due to Denisov's decomposition and the representation of the Brownian Bridge as  $(B_t -t B_1,0 \le t \le 1)$ for $B$ a brownian motion,
the pseudo meander is absolutely continuous with respect to a standard BES$(3)$ process with density depending only on the final value.
Thus,
Theorem \ref{besbr} implies that  $(\taubr_j, \rhobr_j)_{j\geq0}$ satisfies the \tr\ recursion.

From this point, in order to show stationarity we must show that 
\ban
\rhobr_0=\rhobr_0/\sqrt{\taubr_0}\ed \sqrt{2 U \Gamma_{3/2}}. \label{idento}
\ee
Now, the variables $(\taubr_{n})$ and $(\htaubr_{n})$ as defined by \eqref{kj1} and \eqref{kj2} are distributed
like the corresponding $\alpha_i, \tau_i$ and $\htau_i$ of Corollary \ref{lighted} conditioned on the event that $B(1)=0$.
By using Denisov's decomposition to obtain a joint density for the minimum, time of the minimum, and final
value of a Brownian motion on $[0,1]$, some calculation leads to
\ba
\frac{\IP\left(\alphabr_0\in dt,\, \alphabr_0 B_1-B_{\alpha_0}\in dx \right)}{dx \, dt}=\sqrt{\frac{2}{\pi}}\,\frac{x^2}{t^{3/2}(1-t)^{3/2}}\exp\left(-\frac{x^2}{2t(1-t)}\right).
\ee
After noting 
\ba
\rhobr_0\ed \frac{\alphabr_0 B_1-B_{\alpha_0}}{\sqrt{\alphabr_0}},
\ee
a straightforward computation implies \eqref{idento} and hence also the corollary.
\end{proof}

\subsection{Central Limit Theorem}

As a final complement to our results pertaining to the \tr\ recursion, we obtain the following central limit theorem.
\begin{theorem}\label{cltt}
If a sequence $(\tau_j, \rho_j)_{j\geq0}$ satisfies the \tr\ recursion with arbitrary initial distribution, then 
\ban
\frac{\log(\tau_n)+2n}{2\sqrt{n}}\convd Z, \mbox{\,\,{ \rm as} } n\te\infty, \label{cltwp}
\ee
where $Z$ is a standard normal random variable.
\end{theorem}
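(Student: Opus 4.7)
The plan is to derive a clean telescoping identity from the $(\tau,\rho)$ recursion, expressing $\log \tau_n$ as a sum of i.i.d.\ variables plus a tight remainder, and then to apply the classical CLT together with Proposition \ref{stationary}.

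First, I would take logarithms in the defining equation $\tau_{n+1} = \tau_n \rho_{n+1}^2 /(\tau_n Z_{n+1}^2 + \rho_{n+1}^2)$ and work with the standardized variable $\rho_n^* := \rho_n/\sqrt{\tau_n}$ used in Proposition \ref{stationary}. Substituting $\rho_{n+1} = U_n \rho_n$ and noting that
\ba
\tau_n Z_{n+1}^2 + \rho_{n+1}^2 = \tau_n \bigl( Z_{n+1}^2 + U_n^2 \rho_n^{*2}\bigr) = \tau_n \rho_{n+1}^{*2},
\ee
a short algebraic manipulation (using $\log \rho_n = \log \rho_n^* + \tfrac{1}{2}\log \tau_n$) yields the key increment formula
\ba
\log \tau_{n+1} - \log \tau_n = 2\log U_n + 2\log \rho_n^* - 2\log \rho_{n+1}^*.
\ee

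Summing telescopically gives
\ba
\log \tau_n = \log \tau_0 + 2 \sum_{k=0}^{n-1} \log U_k + 2\bigl(\log \rho_0^* - \log \rho_n^*\bigr).
\ee
Since $-\log U_k$ are i.i.d.\ Exp(1), with mean and variance both equal to one, the classical CLT yields
\ba
\frac{\sum_{k=0}^{n-1}\log U_k + n}{\sqrt{n}} \convd Z.
\ee
This is exactly what is needed to match the statement of the theorem after dividing the telescoped identity by $2\sqrt{n}$.

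It remains to show that the boundary terms are negligible: concretely, $(\log \tau_0 + 2\log \rho_0^* - 2\log \rho_n^*)/\sqrt{n} \cp 0$. The first two terms are fixed random variables, so they divided by $\sqrt{n}$ tend to $0$ a.s. For the third, Proposition \ref{stationary} gives total-variation convergence of $\rho_n^*$ to the stationary law of $\sqrt{2\Gamma_{3/2} U}$, which is supported on $(0,\infty)$ and avoids $0$ and $\infty$ almost surely; hence $\log \rho_n^*$ is tight and $\log \rho_n^*/\sqrt{n} \cp 0$. An application of Slutsky's theorem then completes the argument. The main obstacle is spotting the telescoping structure, which is obscured by the nonlinear form of the $\tau$-recursion; once the increment formula above is recognized, the rest is routine.
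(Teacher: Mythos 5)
Your proof is correct, and it takes a genuinely more elementary route than the paper's. Your increment identity $\log\tau_{n+1}-\log\tau_n = 2\log U_n + 2\log\rho_n^{*} - 2\log\rho_{n+1}^{*}$ is exactly the paper's decomposition in disguise: the paper writes $\log\tau_n - \log\tau_0 = \sum_{i=1}^n g(Y_{i-1}^2,Z_i^2)$ with $Y_n = U_n\rho_n^{*}$ and $g(v,w)=\log\left(v/(v+w)\right)$, solves the Poisson equation with $\hat g(v,w)=\log v$ (Lemma \ref{recu}), and then invokes the Markov-chain CLT of Lemma \ref{eclt}, which forces it to identify the stationary law of $(Y_n^2,Z_{n+1}^2)$, check the centering via \eqref{idif} using Dufresne's identity and log-moments of gamma variables, and verify the asymptotic variance $\sigma_g^2=4$ through the further computation \eqref{pst}. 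You observe instead that after telescoping, the fluctuation is carried entirely by the i.i.d.\ terms $2\log U_k$ (mean $-2$, variance $4$), so the classical CLT applies directly, and the boundary term $2\log\rho_n^{*}$ is handled by tightness, for which Proposition \ref{stationary} (total variation convergence to the law of $\sqrt{2\Gamma_{3/2}U}$, supported in $(0,\infty)$, with $\rho_n^{*2}\geq Z_n^2>0$ a.s.) more than suffices. What the paper's route buys is the explicit stationary analysis and the gamma-log-moment identities, which fit its broader study of the Markov chain; what yours buys is brevity and transparency: no Poisson equation, no Markov-chain CLT, and no moment computations, with the variance constant $4$ emerging immediately from ${\rm Var}(2\log U_1)=4$. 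The only shared, implicit caveat is that the initial values must be strictly positive so that $\log\tau_0$ and $\log\rho_0^{*}$ are finite, an assumption equally implicit in the theorem's statement and in the paper's own proof.
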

In order to prove the theorem, we view $\tau_n$ as a function of a Markov chain and then
apply known results from ergodic theory.  We will need the following lemmas.
\begin{lemma}\label{eclt}(\cite{MR1287609} Theorem 17.4.4)
Suppose that $X_1, X_2, \ldots$ is a positive, Harris recurrent Markov chain with (nice) state space $\Omega$ and let $X$ be a random variable distributed as the
stationary distribution of the chain.  Suppose also that $g$ is a function on $\Omega$
and there is a function $\gh$ which satisfies
\ban
\gh(x)-\hp(x)=g(x)-\IE g(X), \label{gha}
\ee 
where
\ba
\hp(x):=\IE\left[\gh(X_2)|X_1=x\right].
\ee
If $\IE\gh(X)^2 < \infty$ and 
\ban
\sigma_g^2:=\IE\left[\gh(X)^2-\hp(X)^2\right] \label{sig}
\ee
is strictly positive, then
\ba
\frac{\sum_{i=1}^n g(X_i)- n\IE g(X)}{\sqrt{n}\sigma_g}\convd Z, \mbox{\,\,{ \rm as} } n\te\infty,
\ee
where $Z$ is a standard normal random variable.
\end{lemma}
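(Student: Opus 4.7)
The plan is to apply Lemma~\ref{eclt} to a suitable augmentation of the Markov chain $X_n:=\rho_n^*$ studied in Proposition~\ref{stationary}. Taking logarithms in the \tr\ recursion yields $\log(\tau_{n+1}/\tau_n)=2\log(U_n X_n/X_{n+1})$, and summing gives the telescoping identity
\[
\log \tau_n + 2n \;=\; \log\tau_0 + 2\log X_0 - 2\log X_n + 2\sum_{k=0}^{n-1}\big(\log U_k + 1\big).
\]
In this form the incremental functional $\log(\tau_{k+1}/\tau_k)+2 = 2(\log U_k+1)+2\log X_k-2\log X_{k+1}$ is a function of the \emph{transition} $(X_k,U_k,X_{k+1})$, not of the state $X_k$ alone. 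To bring it into the state-function form required by Lemma~\ref{eclt}, I would pass to an augmented chain.

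Let $W_k:=(X_{k-1},U_{k-1},X_k)$ for $k\geq 1$; this is Markov, and positive Harris recurrence and strong aperiodicity are inherited from $(X_k)$, with stationary law having $X_{k-1}\sim\pi$ independent of $U_{k-1}\sim\mathrm{Unif}(0,1)$ and $X_k=\sqrt{Z_k^2+U_{k-1}^2 X_{k-1}^2}$. Set
\[
g(x,u,y)\;:=\;2\log u + 2\log x - 2\log y,
\]
so that $\sum_{k=1}^n g(W_k)=\log(\tau_n/\tau_0)$ and $\IE g(W)=2\IE\log U=-2$ in stationarity. The Poisson equation \eqref{gha} is then solved by inspection with
\[
\gh(x,u,y)\;:=\;2\log x + 2\log u,
\]
since $\hp(x,u,y)=\IE[2\log y+2\log U']=2\log y - 2$ yields $\gh(x,u,y)-\hp(x,u,y)=2\log x+2\log u-2\log y+2=g(x,u,y)-\IE g(W)$.

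The variance \eqref{sig} is then short to evaluate. Under stationarity $X_k$ is $\pi$-distributed for every $k$, and $X_{k-1}$ is independent of $U_{k-1}$, so the $\log X$ contributions cancel between $\IE\gh(W)^2$ and $\IE\hp(W)^2$, leaving
\[
\sigma_g^2 \;=\; 4\,\var(\log U)\;=\;4.
\]
Square-integrability $\IE\gh(W)^2<\infty$ reduces to $\IE_\pi(\log X)^2<\infty$---immediate from the explicit density of $\sqrt{2U\Gamma_{3/2}}$---together with $\IE(\log U)^2=2$. Lemma~\ref{eclt} then delivers
\[
\frac{\log(\tau_n/\tau_0)+2n}{2\sqrt n}\;\cd\;Z,
\]
and since $\log\tau_0$ is a.s.\ finite and hence $o_P(\sqrt n)$, \eqref{cltwp} follows for arbitrary initial $(\tau_0,\rho_0)$, as Lemma~\ref{eclt} applies regardless of the starting distribution.

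The main obstacle I foresee is identifying the right augmentation: the increment of $\log\tau_n$ is a transition function of $(X_n)$, so Lemma~\ref{eclt} cannot be applied directly with $X_n$ as the chain and any natural choice of state-function $g$. The triple $W_k=(X_{k-1},U_{k-1},X_k)$ manufactures a chain on which the increment \emph{is} a state function, and once that is done the $\log$-linear shape of the increment matches the $\log$-multiplicative structure of the recursion so closely that the Poisson equation solves by inspection and the variance computation collapses to $\var(\log U)$. A secondary check is that $W_k$ inherits positive Harris recurrence from $(X_k)$, which follows since $W_k\mapsto X_k$ is a Markov projection onto a positive Harris recurrent chain and the fresh randomness $U_{k-1}$ supplies aperiodicity.
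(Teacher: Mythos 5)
Your proposal does not actually prove the statement in question. Lemma \ref{eclt} is the general central limit theorem for functionals of a positive Harris recurrent Markov chain (Meyn and Tweedie \cite{MR1287609}, Theorem 17.4.4); in the paper it is quoted, not proved, and a genuine proof runs through the martingale approximation attached to the Poisson equation: one writes $\sum_{i=1}^n\left(g(X_i)-\IE g(X)\right)$ as $\sum_{i=1}^n\left(\gh(X_i)-(P\gh)(X_{i-1})\right)$ plus a telescoping remainder $(P\gh)(X_0)-(P\gh)(X_n)$, notes that in stationarity the summands are square-integrable martingale differences with variance $\sigma_g^2=\IE\left[\gh(X)^2-(P\gh)(X)^2\right]$, invokes a martingale CLT, and then uses Harris recurrence (regeneration/coupling) to transfer the conclusion from the stationary chain to an arbitrary initial distribution. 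None of this appears in your write-up. Instead you \emph{assume} Lemma \ref{eclt} and apply it to an augmented version of the \tr\ recursion; that is circular with respect to the statement you were asked to prove. What you have written is a proof of Theorem \ref{cltt}, not of Lemma \ref{eclt}.

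Taken as an argument for Theorem \ref{cltt}, your sketch is essentially sound and runs parallel to the paper's own proof of that theorem, differing only in the choice of augmentation: the paper uses the chain $(Y_n^2,Z_{n+1}^2)$ with $g(v,w)=\log\left(v/(v+w)\right)$ and $\gh(v,w)=\log v$, and then verifies the centering $-2$ and the identity $\sigma_g^2=4$ through explicit moments of $\log\Gamma_{1/2}$, $\log\Gamma_{3/2}$ and Dufresne's identity, whereas your triple $W_k=(X_{k-1},U_{k-1},X_k)$ with $\gh(x,u,y)=2\log x+2\log u$ makes the Poisson equation solvable by inspection and collapses the variance to $4\,\var(\log U)=4$, since in stationarity the first and third coordinates share the same marginal law and the first coordinate is independent of the uniform. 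That is a tidy alternative route to the application, but it does not discharge the lemma itself, whose proof requires the Meyn--Tweedie machinery (or an independent regeneration-plus-martingale argument), not an application of it.
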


\begin{lemma}\label{recu}
Let $(B_i)_{i\geq1}$ and $(C_i)_{i\geq1}$ be two i.i.d. sequences of positive random variables (not necessarily with equal distribution) such that
\ba
\IE\log(B_1)<0, \mbox{ and } \,\IE\log(C_1)<\infty.
\ee
If $X_0$ is a positive
random variable independent of $(B_i)_{i\geq1}$ and $(C_i)_{i\geq1}$, and for $n\geq0$, we define
\ba
X_{n+1}=B_{n+1} (X_n+C_{n+1}),
\ee
then there is a unique stationary distribution of the Markov chain 
$(X_n,C_{n+1})_{n\geq0}$.  
Moreover, if $(X,C)$ has this stationary distribution and
\ba
g(v,w):=\log\left(\frac{v}{v+w}\right),
\ee
then \eqref{gha} is satisfied for
\ba
\gh(v,w):=\log(v),
\ee
if and only if
\ba
\IE\log(B_1)=\IE g(X, C).
\ee
\end{lemma}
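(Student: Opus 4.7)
The plan has two parts. For existence and uniqueness of the stationary distribution of the chain $(X_n, C_{n+1})_{n\geq 0}$, I would use the standard backward-iteration argument for affine stochastic recursions. Iterating $X_{n+1} = B_{n+1}(X_n + C_{n+1})$ gives
\[
X_n = X_0 \prod_{i=1}^n B_i + \sum_{k=1}^n C_k \prod_{i=k}^n B_i,
\]
and because $(B_i, C_i)_{i\geq 1}$ is i.i.d., reversing indices yields the distributional equality $X_n \ed Y_n$ where
\[
Y_n := X_0 \prod_{i=1}^n B_i + \sum_{k=1}^n C_k \prod_{i=1}^k B_i.
\]
The second summand is nondecreasing in $n$, and the first tends to $0$ almost surely because the SLLN and $\mu := \IE\log B_1 < 0$ force $\prod_{i=1}^k B_i$ to decay like $e^{k\mu}$.

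To bound the infinite series, I would write $\log\!\big(C_k \prod_{i=1}^k B_i\big) = \log C_k + \sum_{i=1}^k \log B_i$; the hypothesis $\IE\log C_1 < \infty$ is equivalent to $\IE\log^+ C_1 < \infty$, so a Borel--Cantelli argument yields $\log^+ C_k = o(k)$ a.s., and combined with the SLLN the $k$th term of the series is of order $e^{k\mu + o(k)}$, summable by the root test. Hence $Y_\infty := \lim_n Y_n < \infty$ almost surely, and a direct check shows its law is a fixed point of the recursion. Uniqueness is immediate: two chains driven by the same noise from different initial conditions satisfy $X_n - X_n' = (X_0 - X_0')\prod_{i=1}^n B_i \to 0$ a.s. Since $C_{n+2}$ is drawn freshly at each step, the stationary law for the pair $(X_n, C_{n+1})$ is simply the product of the law of $X_\infty$ with an independent copy of $C_1$.

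Part (2) is a direct computation. Given $(X_n, C_{n+1}) = (x, c)$, the next state of the chain is $(B(x+c), C')$ with $B \ed B_1$ and $C' \ed C_1$ independent of each other and of the past, so $\hp(x, c) = \IE[\log(B(x+c))] = \IE\log B_1 + \log(x+c)$, and hence
\[
\gh(x, c) - \hp(x, c) = \log x - \log(x+c) - \IE\log B_1 = g(x, c) - \IE\log B_1.
\]
Matching this against the identity $\gh(x,c) - \hp(x,c) = g(x,c) - \IE g(X,C)$ required by \eqref{gha} gives the stated equivalence with $\IE\log B_1 = \IE g(X,C)$. The only non-routine step in the whole lemma is verifying the almost-sure convergence of $\sum_k C_k \prod_{i=1}^k B_i$ under the minimal hypothesis $\IE\log C_1 < \infty$; everything else reduces either to Letac's standard machinery or to a one-line computation.
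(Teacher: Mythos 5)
Your proof is correct, and the second half (the verification of \eqref{gha} for $\gh(v,w)=\log v$, via $\hp(v,w)=\IE\log B_1+\log(v+w)$) is exactly the computation the paper performs. Where you diverge is the first half: the paper disposes of existence and uniqueness of the stationary law in one line by citing the introduction of Dufresne's paper \cite{duf96}, whereas you reprove it from scratch by the standard backward-iteration (Vervaat/Letac) argument for the affine recursion --- index reversal to get $X_n \ed X_0\prod_{i\le n}B_i+\sum_{k\le n}C_k\prod_{i\le k}B_i$, almost sure convergence of the perpetuity series via the SLLN for $\log B_i$ together with the Borel--Cantelli bound $\log^+ C_k=o(k)$ from $\IE\log^+ C_1<\infty$, and uniqueness by coupling two chains with the same noise so that their difference is $(X_0-X_0')\prod_{i\le n}B_i\to 0$. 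This buys self-containedness (the hypotheses $\IE\log B_1<0$, $\IE\log C_1<\infty$ are used exactly where they matter, which the citation hides), at the cost of length; it also makes explicit the mild implicit assumptions shared by both arguments, namely that the pairs $(B_i,C_i)$ are i.i.d. across $i$ (equivalently, the two sequences are independent of each other, as they are in the application to $(Y_n^2,Z_{n+1}^2)$), which is what justifies both your index reversal and the product form of the stationary law of the pair $(X_n,C_{n+1})$.
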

\begin{proof}
The existence and uniqueness of the stationary distribution can be easily read from the introduction of \cite{duf96}.  For the second assertion, note that
\ba
\gh(v,w)-\IE\left[\gh(X_1, C_2)|X_0=v, C_1=w\right]&=\log(v)-\IE\log(B_1)-\log(v+w) \\
	&=g(v,w)-\IE\log(B_1),
\ee
which proves the lemma.
\end{proof}

We can now prove our main result.
\begin{proof}[Proof of Theorem \ref{cltt}]
Let the \tr\ recursion be generated by the sequences
$(U_i)_{i\geq0}$ of i.i.d. uniform $(0,1)$ random variables and $(Z_i)_{i\geq1}$ of i.i.d. standard normal variables.  Note that we are
using the indexing of the \tr\ recursion as defined in the introduction. 

Next, we define
$Y_n:=U_n\rho_n/\sqrt{\tau_n}$ for $n\geq0$ so that 
\ban
Y_{n+1}^2=U_{n+1}^2\left(Y_{n}^2+Z_{n+1}^2\right), \label{nre}
\ee
and
\ban
\frac{\tau_{n+1}}{\tau_n}=\frac{Y_n^2}{Z_{n+1}^2+Y_n^2}. \label{oio}
\ee
We now have
\ba
\tau_n=\left(\frac{\tau_n}{\tau_{n-1}}\right)\left(\frac{\tau_{n-1}}{\tau_{n-2}}\right)\cdots\left(\frac{\tau_1}{\tau_0}\right)\tau_0,
\ee
which by applying \eqref{oio} yields
\ban
\log(\tau_n)-\log(\tau_0)&=\sum_{i=1}^n\log\left(\frac{Y_{i-1}^2}{Z_{i}^2+Y_{i-1}^2}\right). \label{gct}
\ee
%
%
We  have the following framework:
\ban
\log(\tau_n)-\log(\tau_0)=\sum_{i=1}^ng(Y_{i-1}^2, Z_i^2), \label{tyu}
\ee 
where  
\ban
g(v,w):=\log\left(\frac{v}{v+w}\right) \label{g}
\ee
and $(Y_n^2, Z_{n+1}^2)_{n\geq0}$ is a Markov chain on $\IR^+\times\IR^+$ given by \eqref{nre} and where the 
distribution of $Y_0$ is arbitrary.

By Lemma \ref{recu}, we can apply Lemma \ref{eclt} with $\gh(v,w)=\log(v)$ to \eqref{tyu} as long as
\ban
\IE\log(U_1^2)=\IE\log\left(\frac{Y^2}{Y^2+Z^2}\right), \label{idif}
\ee
where $(Y^2,Z^2)$ are distributed as the stationary distribution of the chain given by \eqref{nre}.  This stationary distribution is unique by Lemma \ref{recu}, and
it is straightforward to see that $Z$ is standard normal, independent of $Y$, and $Y^2\ed2U\Gamma_{1/2}$, where $U$ is uniform $(0,1)$ independent of $\Gamma_{1/2}$.   From this point, it is easy to see that \eqref{idif} is equivalent to
\ba
\IE\log(U)=\IE\log\Gamma_{1/2}-\IE\log(U\Gamma_{1/2}+\Gamma_{1/2}'),
\ee
where all variables appearing are independent.  Some calculations show
$\IE\log(U)=-1$ and $\IE\log(\Gamma_{1/2})=-2\log(2)-\gamma$, where $\gamma$ is Euler's constant.  Also, since $U\ed\Gamma_1/(\Gamma_1+\Gamma_1')$, 
Theorem 1 of \cite{duf96} implies that 
\ban
U\Gamma_{1/2}+\Gamma_{1/2}'\ed U\Gamma_{3/2}, \label{cin}
\ee
so that \eqref{idif} follows after noting $\IE\log(\Gamma_{3/2})=2-\gamma-2\log(2)$.
We remark in passing that \eqref{idif} implies $\IE g(Y^2,Z^2)=-2$, which is the desired mean constant in applying Lemma \ref{eclt} to obtain the
expression \eqref{cltwp}.

Applying Lemma \ref{eclt} with $\gh(v,w)=\log(v)$, the theorem will be proved for \eqref{tyu} if we can show 
\ba
\IE\left[\log^2(Y^2)\right]<\infty,
\ee
which is straightforward, 
and
\ban
\IE\left[\log^2(Y^2)-\left(-2+\log(Y^2+Z^2)\right)^2\right]=4. \label{pst}
\ee
Using \eqref{cin}, some algebra reveals that \eqref{pst} is equivalent to
\ba
\IE\log(\Gamma_{1/2})^2&+2\IE\left[(\log(2)+\log(U))\log(\Gamma_{1/2})\right]+4\log(2)+4\log(U) \\
&=\IE\log(\Gamma_{3/2})^2+2\IE\left[(\log(2)+\log(U)-2)\log(\Gamma_{3/2})\right]+8,
\ee
where the random variables are the same as above.
This equality is easily verified using the moment information above and the facts
\ba
\IE\log(\Gamma_{1/2})^2=\frac{\pi^2}{2}+(\gamma+2\log(2))^2
\ee
and
\ba
\IE\log(\Gamma_{3/2})^2=\frac{\pi^2}{2}+(\gamma+2\log(2)-2)^2-4.
\ee

Finally, we have shown the CLT for \eqref{tyu}, and  
\eqref{cltwp} follows since $\log(\tau_0)/\sqrt{n}\rightarrow0$ in probability.
\end{proof}

\section{Appendix}

This appendix provides the calculations involved in obtaining information about the times
of vertices of the \cmin\ of Brownian motion on $[0, 1]$ from analogous facts about the times of vertices of the \cmin\ of the
standard meander; see Corollary \ref{alpim}.  

Following the previous notation, let 
$(\alpha_i)_{i\in\IZ}$ be the times of the vertices of the \cmin\ of a Brownian motion on $[0,1]$ as described in the introduction by \eqref{alps} and 
\eqref{alp0}, and let $f_{\alpha_i}$ denote the density of $\alpha_i$.  
Also, for $n=1, 2, \ldots$ let $1-\tau_n$ be the time of the right endpoint of the $n$th face of the \cmin\ of a standard meander, and let 
$f_{\tau_n}$ denote the density of $\tau_n$.  As per Corollary \ref{lighted}, we have for $n\geq0$ the representation 
\ba
\alpha_{-n}=\alpha_0\tau_n, 
\ee
where $\alpha_0$ is arcsine distributed and independent of $\tau_n$.  

For example, for each $n = 1, 2, \ldots$ we can compute directly
\ban
f_{\alpha_{-n}}(u) = \frac{1} {\pi} \int_{u}^ 1  v^{-3/2} (1-v)^{-1/2} f_{\tau_n}(u/v) dv \label{dtu}
\ee
and for $p > 0$
\ban
\IE ( \alpha_{-n} ^ p ) = \E ( \alpha_{0} ^ p ) \E( \tau_n ^p ), \label{mtu}
\ee
and
expressions for $ \E ( \alpha_{0} ^ p )$ are known.  Equations \eqref{dtu} and \eqref{mtu} can be used to transfer moment and
density information
from $\tau_n$ to $\alpha_{-n}$, and also note that $\alpha_{n}\ed1-\alpha_{-n}$, so that this program 
yields the analogous properties for $\alpha_n$. 
Unfortunately, \eqref{dtu} can be difficult to handle, so we use the following proposition.
\begin{proposition}\label{asiv}
Let $(c_n)_{n\geq0}$ be a sequence of non-negative numbers such that
\ba
\sum_{n=0}^\infty c_n(1-u)^n 
\ee
converges for all $0<u\leq1$.  If
\ba
g(u):=(1-u)^{-a}\sum_{n=0}^\infty c_n(1-u)^n 
\ee
for some $0\leq a<1$, and
\ba
f(u):=\frac{1}{\sqrt{\pi u}}\sum_{n=0}^\infty \frac{\Gamma(n-a+1)}{\Gamma(n-a+\frac{3}{2})}c_n (1-u)^{n-a+\hf}
\ee
then 
\ban
f(u) = \frac{1} {\pi} \int_{u}^ 1  v^{-3/2} (1-v)^{-1/2}g(u/v) dv. \label{kid}
\ee
\end{proposition}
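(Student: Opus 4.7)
The plan is to prove \eqref{kid} term-by-term in the power series, reducing the problem to a single beta-function integral. Because $c_n \geq 0$ and all integrands are non-negative (here we use $0 \leq a < 1$ so that $n - a > -1$ for every $n \geq 0$, and convergence of $\sum c_n (1-u)^n$ is given), Tonelli's theorem lets us freely interchange sum and integral in
\[
\frac{1}{\pi}\int_u^1 v^{-3/2}(1-v)^{-1/2} g(u/v)\,dv
= \sum_{n=0}^\infty c_n \cdot \frac{1}{\pi}\int_u^1 v^{-3/2}(1-v)^{-1/2}(1-u/v)^{n-a}\,dv.
\]
So it suffices to identify the $n$-th integral as the $n$-th term of $f(u)$.

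Denote that integral by $I_n(u)$. The key step is to remove the awkward $u/v$ with the substitution $y = u/v$, $dv = -u\,y^{-2}\,dy$, which after straightforward simplification (the powers of $y$ collapse cleanly because $\frac{3}{2}+\frac{1}{2}-2=0$) yields
\[
I_n(u) \;=\; \frac{1}{\sqrt{u}} \int_u^1 (y-u)^{-1/2}(1-y)^{n-a}\,dy.
\]
Now the substitution $y = u + (1-u)s$, $s \in [0,1]$, converts the remaining integral into a standard beta integral:
\[
\int_u^1 (y-u)^{-1/2}(1-y)^{n-a}\,dy \;=\; (1-u)^{\,n-a+1/2}\int_0^1 s^{-1/2}(1-s)^{n-a}\,ds
\;=\; (1-u)^{\,n-a+1/2}\,\frac{\Gamma(\tfrac{1}{2})\Gamma(n-a+1)}{\Gamma(n-a+\tfrac{3}{2})}.
\]

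Combining the two substitutions and using $\Gamma(\tfrac{1}{2}) = \sqrt{\pi}$,
\[
\frac{1}{\pi}\,I_n(u) \;=\; \frac{1}{\sqrt{\pi u}}\,\frac{\Gamma(n-a+1)}{\Gamma(n-a+\tfrac{3}{2})}\,(1-u)^{\,n-a+1/2},
\]
which is precisely the $n$-th term of the series defining $f(u)$. Summing over $n$ and invoking Tonelli one last time completes the proof. The only point requiring care is the interchange of sum and integral, which is handled by the non-negativity and the hypothesized convergence; everything else is the two-step change of variables and recognition of the beta integral.
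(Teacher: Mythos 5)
Your proof is correct and takes essentially the same route as the paper: interchange sum and integral (justified here by non-negativity, using $n-a>-1$), and reduce each term to the beta integral $\frac{1}{\pi}\int_u^1 v^{-3/2}(1-v)^{-1/2}(1-u/v)^{p-1}\,dv = \frac{\Gamma(p)}{\Gamma(\frac12)\Gamma(p+\frac12)}\,u^{-1/2}(1-u)^{p-\frac12}$ with $p=n-a+1$. The only cosmetic difference is that you establish this identity by an explicit two-step change of variables, whereas the paper reads it off from the product identity $\beta_{1/2,1/2}\,\beta_{1,p} \ed \beta_{1/2,p+1/2}$ for independent beta variables; the underlying computation is the same.
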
 
\begin{proof}
The proposition follows from term by term integration using the fact that for $p>0$,
\ba
\frac{\Gamma(p+1)}{\Gamma(\frac{1}{2})\Gamma(p+\frac{1}{2})}u^{-1/2}(1-u)^{p-\hf } = \frac{1} {\pi} \int_{u}^ 1  v^{-3/2} (1-v)^{-1/2}\left[p\left(1-\frac{u}{v}\right)^{p-1}\right] dv,
\ee
which is derived by considering densities in the standard identity
\ba
\beta_{1/2,1/2} \beta_{1,p}  \ed \beta_{1/2,p+1/2}
\ee
where $\beta_{b,d}$ denotes a random variable with beta$(b,d)$ distribution for some $b,d>0$, and on the left side the
random variables $\beta_{1/2,1/2}$ and $\beta_{1,p}$ are independent.
\end{proof}

In order to illustrate the method, we will use Proposition \ref{asiv} to finish the proof of Corollary \ref{alpim}.
In order to ease exposition, we will refer to $f$ of \eqref{kid} as the \emph{arcsine transform} of $g$.
Now, recall that
\ba
\sum_{i=1}^{\infty}f_{\alpha_{-i}}(t)=\frac{1}{\pi} \int_t^1v^{-3/2}(1-v)^{-1/2} \left(\sum_{i=1}^{\infty}f_{\tau_i}(t/v)\right) dv,
\ee
and that 
\ba
\sum_{i=1}^\infty f_{\tau_i}(u)=\frac{1}{4}\left[\frac{1}{u\sqrt{1-u}}+\frac{1}{u}+\left(\frac{1}{1-u}-\frac{\arcosh(u^{-1/2})}{(1-u)^{-3/2}}\right)\right].
\ee
We claim that 
\ban
\sum_{i=1}^{\infty}f_{\alpha_{-i}}(u)=\frac{1}{4}\left[\frac{1}{u}+\frac{2}{\pi u}\arcos(\sqrt{u})+\frac{2}{\pi}\left(\frac{ \arcos \sqrt{ u } }{ 1 - u } - \frac{ 1 } {\sqrt{u} \sqrt{ 1 - u }} \right)\right], \label{www}
\ee
which will follow by applying Proposition \ref{asiv} appropriately.  More precisely, we can write
\ba
u^{-1}=\sum_{n=0}^\infty(1-u)^n,
\ee
so that Proposition \ref{asiv} with $a=0$ and $c_n\equiv1$ implies the arcsine transform of $u^{-1}$ can be represented as
\ban
\frac{2}{\pi}\sqrt{\frac{1-u}{u}}\sum_{n=0}^\infty \frac{n!}{(\thf)_n}(1-u)^n &= \frac{2}{\pi}\sqrt{\frac{1-u}{u}}\,_2F_1(1,1;\thf;(1-u)) \notag \\
		&= \frac{2}{\pi u}\arcos(\sqrt{u}), \label{fp1}
\ee
where $(a)_n=a(a+1)\cdots(a+n-1)$ and in the second inequality we have used the evaluation of $_2F_1$ found in (15.1.6) of \cite{as}.

Similarly, we can apply Proposition \ref{asiv} with $a=1/2$ and $c_n\equiv1$ to find the arcsine transform of $[u\sqrt{(1-u)}]^{-1}$ to be
\ban
u^{-\hf}  \sum_{n = 0}^\infty \frac{ \Gamma(n + \hf)  }{\Gamma(\hf) n! } (1 - u)^n &=  u^{-1}. \label{fp2}
\ee

Finally, we write 
\ban
\frac{\sqrt{1-u}-\arcosh(u^{-1/2})}{(1-u)^{-3/2}}= -\sum_{n=0}^\infty \frac{ (1-u)^n } { 2 n + 3 },  \label{fs3}
\ee
so that we can apply Proposition \ref{asiv} with $a=0$ and $c_n=1/(2n+3)$ to find the arcsine transform of \eqref{fs3} to be
\ban
\frac{2}{\pi}\sqrt{\frac{1-u}{u}}\sum_{n=0}^\infty &\frac{ n!  (1-u)^n } { (\thf)_n  (2 n + 3 ) } = \frac{2}{3\pi}\sqrt{\frac{1-u}{u}}\sum_{n=0}^\infty \frac{ n!  (1-u)^n } { (\fhf)_n } \notag \\
&= \frac{2}{3\pi}\sqrt{\frac{1-u}{u}} \,_2F_1( 1,1;\fhf; 1-u ) \notag \\
&= \frac{2}{\pi}\sqrt{\frac{1-u}{u}}\left((1-u)^{-1} -(1-u)^{-3/2} u^{1/2} \arcos(\sqrt{u})\right), \label{fp3}
\ee
where in the last equality we have used the reduction formula (15.2.20) of \cite{as}, and then again (15.1.6) there.

Now combining \eqref{fp1}, \eqref{fp2}, and \eqref{fp3} shows \eqref{www} and proves Corollary \ref{alpim}.  As mentioned previously,
Proposition \ref{asiv} can also be used to obtain expressions for $f_{\alpha_i}$ by expanding $f_{\tau_i}$ appropriately.

\bibliographystyle{plain}
\bibliography{convex_minorant}
\end{document}